\numberwithin{equation}{section} 
\newtheorem{thm}{Theorem}[section]
\newtheorem{lem}[thm]{Lemma}
\newtheorem{cor}[thm]{Corollary}
\newtheorem{rem}[thm]{Remark}
\begin{document}
\baselineskip 14pt
\title{Uniform Diophantine approximation and run-length function in continued fractions}
\author{Bo Tan and Qing-Long Zhou$^{\dag}$}
\address{$^{1}$ School  of  Mathematics  and  Statistics,
                Huazhong  University  of Science  and  Technology, 430074 Wuhan, PR China}
          \email{tanbo@hust.edu.cn}

\address{$^{2}$ School of Science, Wuhan University of Technology, 430070 Wuhan, PR China }
\email{zhouql@whut.edu.cn}
\thanks{$^{\dag}$ Corresponding author.}
\keywords{Uniform Diophantine approximation; Continued fractions; Run-length function.}
\subjclass[2010]{Primary 11K55; Secondary 28A80, 11J83}
\keywords{Uniform Diophantine approximation; Continued fractions; Run-length.}


\date{}

\begin{abstract}
We study the multifractal properties of the uniform approximation exponent and asymptotic approximation exponent in continued fractions. As a corollary, 
we calculate the Hausdorff dimension of the uniform Diophantine set
$$\mathcal{U}(y,\hat{\nu})=\Big\{x\in[0,1)\colon \forall N\gg1, \exists~ n\in[1,N], \text{ such that }  |T^{n}(x)-y|<|I_{N}(y)|^{\hat{\nu}}\Big\}$$
for algebraic irrational points $y\in[0,1)$.
These results contribute to the study of the uniform Diophantine approximation,
and  apply to investigating the multifractal properties of run-length function in continued fractions.
\end{abstract}

\maketitle
{\small \tableofcontents}

\section{Introduction}

\subsection{Uniform Diophantine approximation}
The classical metric Diophantine approximation is concerned with the question of how well a real number can be approximated by rationals.
A qualitative answer is provided by the fact that the set of rationals is dense in the reals.
Dirichlet pioneered the quantitative study by showing that, for any $x\in \mathbb{R}$ and $Q>1,$
there exists $(p,q)\in\mathbb{Z}\times\mathbb{N}$ such that
\begin{equation}\label{D1}
|qx-p|\le \frac{1}{Q}~~\text{and } q<Q.
\end{equation}
The result serves as a start point of the metric theory in Diophantine approximation.
An easy application yields the following corollary$\colon$ for any $x\in \mathbb{R},$
there exists infinitely many $(p,q)\in\mathbb{Z}\times\mathbb{N}$ such that
\begin{equation*}
|qx-p|\le \frac{1}{q}.
\end{equation*}
This corollary claims that $|qx-p|$ is small compared to $q$, while Dirichlet's original theorem (\ref{D1}) provides  a uniform estimate of $|qx-p|$  in terms of $Q$. These two kinds of approximations are referred to as uniform approximation and asymptotic respectively. See \cite{W12} for more account on the related subject.

In this article we are interested in the numbers which are approached in uniform or asymptotic way by an orbit (in a dynamical system) with a prescribed speed.
Let $(X,  T, \mu)$ be a measure-preserving dynamical system, where
 $(X,d)$ is a metric space,  $T\colon X\to X$ is a Borel transformation, and   $\mu$ is a $T$-invariant Borel probability measure on $X$.
As is well known, Birkhoff's ergodic theorem \cite{PW} implies that,  in an ergodic dynamical system,  for almost all $y\in X$, the set
$$\left\{x\in X\colon \liminf_{n\to\infty}d(T^{n}(x),y)=0\right\}$$
is of full $\mu$-measure.
The  result, which gives a qualitative characterization of the distributions of the $T$-orbits in $X$, can be regarded as a counterpart of  the density property of rational numbers in the reals. It  leads naturally to the   quantitative study of the distributions of the $T$-orbits.

The shrinking target problem  in dynamical system $(X,T)$ aims at a quantitative study on the Birkhoff's ergodic theorem, which investigates the set
$$W_y(T,\psi)=\Big\{x\in X\colon d(T^{n}(x),y)<\psi(n) \text{ for infinitely many } n\in \mathbb{N}\Big\},$$
where $\psi\colon \mathbb{N}\to \mathbb{R}$ is a positive function such that $\psi(n)\to 0$ as $n\to\infty,$ and $y\in X.$
Hill $\&$ Velani \cite{HV95} studied the Hausdorff dimension of the set
$$\Big\{x\in X\colon d(T^{n}(x),y)<e^{-\tau n} \text{ for infinitely many } n\in \mathbb{N}\Big\}$$
in the system $(X,T)$ with $T$ an expanding rational map of degree greater than or equal to 2 and $X$ the corresponding Julia set, where $\tau>0.$ See \cite{TW} for more information.

Representations of real numbers are often induced by dynamical systems or algorithms,
and thus the related Diophantine approximation problems are in the nature of dynamical system, fractal geometry and number theory. An active topic of research lies in studying the approximation of real numbers in dynamical systems by the    orbits of the points. Recently, many researchers have studied the Hausdorff dimension of the set $W_y(T,\psi)$ in the corresponding dynamical system under different expansions, and obtained many significant results \cite{LWWX,SW,STZ,TZ}.
Marked by the famous mass transfer principle established by Beresnevich $\&$ Velani \cite{BV}, studies on the asymptotic approximation properties of orbits in dynamical systems are relatively mature.
However, there are   few  results on the uniform approximation properties of orbits.

Let $\big(X,T\big)$ be a exponentially mixing system with respect to the probability measure $\mu$, and let $\psi\colon \mathbb{N}\to \mathbb{R}$ be a positive function  satisfying that $\psi(n)\to 0$ as $n\to\infty$.
Kleinbock, Konstantous $\&$ Richter \cite{KKR} studied the Lebesgue measure of the set of real numbers $x\in X$ with the property that,
for every sufficiently large integer $N,$ there is an integer $n$ with $1\le n\le N$ such that the distance between $T^{n}(x)$ and a fixed $y$ is at most   $\psi(N),$ i.e.,
$$\mathcal{U}(y,\psi)=\Big\{x\in[0,1)\colon \forall N\gg1, \exists~n\in[1,N], \text{ such that }  |T^{n}(x)-y|<\psi(N)\Big\}.$$
They gave the sufficient conditions for $\mathcal{U}(y,\psi)$ to be of zero or full measure.
Although the Khintchine type 0-1 law of the set $\mathcal{U}(y,\psi)$ has not been established,
the work has aroused the interest of researchers  (see \cite{GP,KA,KKP} for the related studies).
Bugeaud $\&$ Liao \cite{BL} investigated the size of the set
$$\Big\{x\in[0,1)\colon \forall N\gg1, \exists~ n\in[1,N], \text{ such that }  T_\beta^{n}(x)<|I_{N}(0)|^{\hat{\nu}}\Big\}$$
in $\beta$-dynamical systems from the perspective of Hausdorff dimension, where $T_\beta$ is the $\beta$-transformation on $[0,1)$ defined by $T_\beta(x)=\beta x \text{ mod }1,$ $I_n(0)$ denotes the basic interval of order $n$ which contains the point 0, and $\hat{\nu}$ is a nonnegative real number. For more information related to the uniform approximation properties, see \cite{KL,ZW} and the references therein.

In this paper, we shall investigate the uniform approximation properties of the orbits  under the Gauss transformation.

The Gauss transformation $T\colon [0,1) \to [0,1)$ is defined as  
\begin{equation*}
T(0)=0,~ T(x)= \frac{1}{x}\!\!\!\pmod1~\text{for} ~x\in(0,1).
\end{equation*}
And each irrational number $x\in[0,1)$ can be uniquely expanded into the following form$\colon$
\begin{equation}\label{e1}
x=\frac{1}{a_{1}(x)+\frac{1}{a_{2}(x)+\ddots+\frac{1}{a_{n}+T^{n}(x)}}}
 =\frac{1}{a_{1}(x)+\frac{1}{a_{2}(x)+\frac{1}{a_{3}(x)+\ddots}}},
\end{equation}
with $a_{n}(x)=\lfloor\frac{1}{T^{n-1}(x)}\rfloor$, called the $n$-th partial quotient of $x$ (here $\lfloor\cdot\rfloor$ denotes the greatest integer less than or equal to a real number and $T^0$ denotes the identity map).
For simplicity of notation, we write $(\ref{e1})$ as
\begin{equation}\label{e2}
x=[a_{1}(x),a_{2}(x),\ldots,a_{n}(x)+T^{n}(x)]=[a_{1}(x),a_{2}(x),a_{3}(x),\ldots].
\end{equation}
As was shown by Philipp \cite{P67}, the system $([0,1),T)$ is exponentially mixing with respect to the Gauss measure $\mu$ given by $d\mu=dx/(1+x)\log 2.$
Thus  the above result of \cite{KKR} applies for the Gauss measure of the set $\mathcal{U}(y,\psi)$ in the system of continued fractions.
In consequence, we shall focus on the size of $\mathcal{U}(y,\psi)$ in dimension.

The dimension of  sets $\mathcal{U}(y,\psi)$   depend on the choice of the given point $y$. In this paper, we will consider a class of quadratic irrational numbers $y={(\sqrt{i^{2}+4}-i)}/{2}=[i,i,\ldots]$  with $i\in \mathbb{N}$,
and calculate the Hausdorff dimension of the set
$$\mathcal{U}(y,\hat{\nu})=\Big\{x\in[0,1)\colon \forall N\gg1, \exists~ n\in[1,N], \text{ such that }  |T^{n}(x)-y|<|I_{N}(y)|^{\hat{\nu}}\Big\}.$$
For $\beta \in[0,1]$, let $s(\beta,{y})$ denote the solution of
$$P\Big(T,-s\Big(\log |T'|+\frac{\beta}{1-\beta}{g(y)}\Big)\Big)=0,$$
where $P(T,\phi)$ is the pressure function with potential $\phi$ in the continued fraction system $([0,1),T)$, $T'$ is the derivative of $T$, {and $g(y)$ is the limit $\lim_{n}\log q_{n}(y)/n$.}
\begin{thm}\label{TZ1}
Given a nonnegative real number $\hat{\nu},$ we have
\begin{equation*}
 \dim_{H}\mathcal{U}(y,\hat{\nu})=\left\{
    \begin{array}{ll}
      s\Big(\frac{4\hat{\nu}}{(1+\hat{\nu})^{2}},{y}\Big), & \ \ \ \text{if }~ 0\leq \hat{\nu}\leq 1; \\
      0, & \ \ \ \text{otherwise.}
    \end{array}
  \right.
\end{equation*}
Throughout the paper, $\dim_{H}$ denotes the Hausdorff dimension of a set.
\end{thm}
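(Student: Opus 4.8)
The plan is to convert the metric condition defining $\mathcal{U}(y,\hat\nu)$ into a purely combinatorial condition on the continued fraction digits of $x$ (a run-length condition), to identify the extremal Moran-type subconstruction compatible with it, and then to read off the Hausdorff dimension from a Bowen–Ruelle pressure equation; the exponent $\beta(\hat\nu)=\frac{4\hat\nu}{(1+\hat\nu)^{2}}$ will appear when optimizing the geometry of that construction. First I would establish the reduction. Since $y=[i,i,i,\dots]$ is the repelling fixed point of the $i$-th inverse branch of $T$ and has bounded partial quotients, the cylinders of a given order near $y$ are uniformly comparable and $|I_{m}(y)|\asymp q_{m}(y)^{-2}\asymp e^{-2mg(y)}$. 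Hence $|T^{n}(x)-y|<|I_{N}(y)|^{\hat\nu}$ is equivalent, up to a bounded additive change of order, to $T^{n}(x)$ lying in the cylinder of $y$ of order $\lfloor\hat\nu N\rfloor$, i.e.\ to $a_{n+1}(x)=\dots=a_{n+\lfloor\hat\nu N\rfloor}(x)=i$. Writing $R^{(i)}_{M}(x)$ for the length of the longest run of consecutive $i$'s among $a_{1}(x),\dots,a_{M}(x)$, an elementary argument (take $M=N+\lfloor\hat\nu N\rfloor$ one way and $N=\lfloor M/(1+\hat\nu)\rfloor$ the other) shows that, up to a set of Hausdorff dimension $0$,
\[
\mathcal{U}(y,\hat\nu)=\Big\{x:\ \liminf_{M\to\infty}\frac{R^{(i)}_{M}(x)}{M}\ \ge\ \frac{\hat\nu}{1+\hat\nu}\Big\},
\]
which is the bridge to the multifractal analysis of the run-length function advertised in the abstract.

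\textbf{Extremal configuration.} For such an $x$, look at the successive maximal blocks of $i$'s that realize the constraint; if the $k$-th one begins at position $p_{k}$ and has length $\ell_{k}$, then disjointness of distinct maximal blocks gives $p_{k+1}\ge p_{k}+\ell_{k}$, while the ``for all $N$'' requirement gives $p_{k+1}\le\ell_{k}/\hat\nu+O(1)$. Running a geometric ansatz $p_{k+1}=\lambda p_{k}$ and optimizing over $\lambda$ (the optimum being $\lambda=2/(1-\hat\nu)$) shows that the \emph{smallest} possible value of the liminf-density of positions forced to equal $i$ is $\beta(\hat\nu)=\frac{4\hat\nu}{(1+\hat\nu)^{2}}$, equivalently that the \emph{largest} possible liminf-density of free positions is $1-\beta(\hat\nu)=\big(\tfrac{1-\hat\nu}{1+\hat\nu}\big)^{2}$. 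When $\hat\nu>1$ the two constraints are incompatible for any $x$ that is not eventually constant equal to $y$, so $\mathcal{U}(y,\hat\nu)$ is countable and the dimension is $0$; the endpoints $\hat\nu=0,1$ follow by inspection and continuity. For $0<\hat\nu<1$ this pins down the ``model set'': free digits on a set of positions of worst-case density $1-\beta(\hat\nu)$, digit $i$ elsewhere.

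\textbf{Dimension of the model set.} On the free blocks I would prescribe the digits according to the equilibrium state $\mu_{s}$ of the potential $-s\log|T'|$, so that $\mu_{s}(I_{m})\asymp|I_{m}|^{s}e^{-mP(T,-s\log|T'|)}$ by Ruelle's theorem and bounded distortion for the Gauss map, and let the measure be deterministic (equal to $i$) on the forced blocks. Evaluating $\log\mu(I_{M}(x))/\log|I_{M}(x)|$ at the worst scales $M$, namely those reached immediately after a long forced block where the forced-digit density is $\beta(\hat\nu)$, a direct computation shows that this Hölder exponent equals $s$ precisely when the ``stretch'' contributed by the forced blocks balances the free part, i.e.\ precisely when
$P\big(T,-s(\log|T'|+\tfrac{\beta(\hat\nu)}{1-\beta(\hat\nu)}g(y))\big)=0$,
the defining equation of $s(\beta(\hat\nu),y)$. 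Thus $s(\beta(\hat\nu),y)$ is the candidate dimension.

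\textbf{The two inequalities and the main obstacle.} For the lower bound I would realize the model set as a homogeneous Moran construction (forced blocks as above; on each free block the digit ranging over $\{1,\dots,B\}$), put on it the natural self-similar measure built from $\mu_{s}$, and apply the mass distribution principle; letting $B\to\infty$ exhausts the full pressure $P(T,-s\log|T'|)$, which is essential because for the Gauss map this pressure blows up at $s=\tfrac12$. For the upper bound I would cover $\mathcal{U}(y,\hat\nu)$, for each large $N$, by the order-$(n+\lfloor\hat\nu N\rfloor)$ cylinders constituting $\bigcup_{n\le N}\{a_{n+1}=\dots=a_{n+\lfloor\hat\nu N\rfloor}=i\}$, iterate this along a sequence $N_{j}$ chosen so that the forced blocks accumulate to the extremal density $\beta(\hat\nu)$, and estimate $\sum(\operatorname{diam})^{s}$ over this cover using the definition of the pressure, concluding $\mathcal H^{s}(\mathcal{U}(y,\hat\nu))=0$ for every $s>s(\beta(\hat\nu),y)$. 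I expect the upper bound to be the hard part: one must show that no placement of forced blocks can beat the extremal geometric one — i.e.\ that every $x$ in the set has forced-digit liminf-density at least $\beta(\hat\nu)$ — while simultaneously controlling the contribution of the unbounded free partial quotients. This is the delicate bookkeeping, in the spirit of Bugeaud–Liao, of matching each large $N$ to the block of $i$'s that witnesses it.
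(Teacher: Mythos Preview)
Your overall architecture is close to the paper's: the reduction of the metric condition to a run-length constraint on the digit $i$, the Moran-type lower bound with a mass distribution argument, and the pressure equation defining $s(\beta,y)$ all match what the paper does (indeed the paper proves a run-length analogue as Theorem~\ref{thm1}). But there is a genuine error in the extremal-configuration step that would break your upper bound. You assert that every $x\in\mathcal{U}(y,\hat\nu)$ has forced-digit \emph{liminf}-density at least $\beta(\hat\nu)=4\hat\nu/(1+\hat\nu)^{2}$. This is false: take $x$ with $\hat\nu(x)=\hat\nu$ but with record runs satisfying $(m_k-n_k)/n_k\to\infty$ while $(m_k-n_k)/n_{k+1}\to\hat\nu$; at scale $n_{k+1}$ the cumulative forced-digit density is $\sum_{j\le k}(m_j-n_j)/n_{k+1}\to\hat\nu$, which for $\hat\nu<1$ is strictly smaller than $\beta(\hat\nu)$. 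What the covering argument actually needs is a \emph{limsup} statement at the scales $m_k$ (end of a record run), and even that depends on a second parameter that your one-variable optimization over $\lambda$ hides.

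The paper handles this by introducing the asymptotic exponent $\nu(x)=\limsup_k(m_k-n_k)/n_k$ alongside $\hat\nu(x)$, first proving the finer Theorem~\ref{TZ2} that $\dim_H\{x:\hat\nu(x)=\hat\nu,\ \nu(x)=\nu\}=s(\xi,y)$ with $\xi=\nu^{2}/((1+\nu)(\nu-\hat\nu))$, and then obtaining Theorem~\ref{TZ1} by decomposing $\{\hat\nu(x)\ge\hat\nu\}$ over rational $\nu$ and taking the supremum; the minimum of $\xi$ over $\nu$ is precisely $\beta(\hat\nu)$, attained at $\nu=2\hat\nu/(1-\hat\nu)$ (equivalently your $\lambda=2/(1-\hat\nu)$). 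The key covering ingredient (the paper's (3.4)--(3.6), via Stolz--Ces\`aro) is that for $x$ with $\hat\nu(x)\ge\hat\nu$ and $\nu(x)$ near $\nu$ one has $\sum_{j\le k}(m_j-n_j)\ge(\xi-\varepsilon)m_k$ for infinitely many $k$. Your sketch becomes correct if you replace the liminf-density claim by this limsup statement and either carry the parameter $\nu(x)$ through explicitly, or prove directly that $\limsup_k \big(\sum_{j\le k}(m_j-n_j)\big)/m_k\ge\beta(\hat\nu)$ for every $x$ in the set.
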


We now turn to the discussion of two approximation exponents which are relevant to asymptotic/uniform Diophantine approximation. For $x\in[0,1),$ 
we define the asymptotic approximation exponent of $x$ by
$$\nu(x)=\sup\Big\{\nu\ge 0\colon |T^{n}(x)-y|<|I_n(y)|^{\nu}\text{ for infinitely many } n\in \mathbb{N}\Big\}$$
and the uniform approximation exponent by
$$\hat{\nu}(x)=\sup\Big\{\hat{\nu}\ge 0\colon \forall N\gg1, \exists~ n\in[1,N], \text{ such that }  |T^{n}(x)-y|<|I_N(y)|^{\hat{\nu}}\Big\}.$$
The exponents $\nu(x)$ and $\hat{\nu}(x)$ are analogous to the exponents introduced in \cite{AB}, see also \cite{BL,BL05}.
By the definitions of $\nu(x)$ and $\hat{\nu}(x)$, it is readily checked that $\hat{\nu}(x)\le \nu(x)$ for all $x\in[0,1).$
Actually, applying Philipp's result \cite{P67}, we deduce that $\nu(x)=0$ for Lebesgue almost all $x\in[0,1)$ (see Lemma \ref{full}). Li, Wang, Wu $\&$ Xu \cite{LWWX} studied the multifractal properties of the asymptotic exponent $\nu(x)$ and showed that for $0\le\nu\le+\infty,$
\begin{equation}\label{Asy}
 \dim_{H}\{x\in[0,1)\colon \nu(x)\ge \nu\}=s\Big(\frac{\nu}{1+\nu},{y}\Big).
\end{equation}
We will denote by $E(\hat{\nu})$ the level set of the uniform approximation exponent:
$$E(\hat{\nu})=\{x\in[0,1)\colon \hat{\nu}(x)=\hat{\nu}\}.$$

\begin{thm}\label{TZ3}
Given a nonnegative real number $\hat{\nu},$ we have
$$\dim_{H}E(\hat{\nu})=\dim_{H}\{x\in[0,1)\colon \hat{\nu}(x)\ge\hat{\nu}\}=\dim_{H}\mathcal{U}(y,\hat{\nu}).$$
\end{thm}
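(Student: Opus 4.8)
The plan is to deduce everything from Theorem~\ref{TZ1}: the three sets coincide up to ``boundary'' sets, and all inclusions between them become equalities of Hausdorff dimension once one knows the regularity of $\hat\nu\mapsto\dim_H\mathcal{U}(y,\hat\nu)$, the only genuinely non-formal input being one lower bound for $\dim_H E(\hat\nu)$, which will require the Cantor-set construction underlying Theorem~\ref{TZ1}. Write $d(\hat\nu):=\dim_H\mathcal{U}(y,\hat\nu)$. Since $|I_N(y)|<1$ the family $\mathcal{U}(y,\cdot)$ is non-increasing, and unwinding the definitions of $\hat\nu(x)$ and $E(\hat\nu)$ one gets, for every $\hat\nu\ge 0$,
\[
\bigcup_{\hat\nu'>\hat\nu}\mathcal{U}(y,\hat\nu')=\{x:\hat\nu(x)>\hat\nu\}\ \subseteq\ \mathcal{U}(y,\hat\nu)\ \subseteq\ \{x:\hat\nu(x)\ge\hat\nu\}\ \subseteq\ \bigcap_{\hat\nu'<\hat\nu}\mathcal{U}(y,\hat\nu'),
\]
together with the disjoint decomposition $\{x:\hat\nu(x)\ge\hat\nu\}=E(\hat\nu)\sqcup\{x:\hat\nu(x)>\hat\nu\}$.

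First I would record that $d$ is non-increasing and left-continuous on $[0,\infty)$: by Theorem~\ref{TZ1} it equals $s\Big(\tfrac{4\hat\nu}{(1+\hat\nu)^2},y\Big)$ on $[0,1]$ and vanishes on $(1,\infty)$, and since $\hat\nu\mapsto\tfrac{4\hat\nu}{(1+\hat\nu)^2}$ is continuous on $[0,1]$ while $\beta\mapsto s(\beta,y)$ is continuous (a standard consequence of the continuity of the pressure function $P(T,\cdot)$, using $g(y)=\log\frac{i+\sqrt{i^2+4}}{2}>0$), the map $d$ is continuous on $[0,1]$, hence left-continuous on all of $[0,\infty)$. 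Using the rightmost inclusion above, $\dim_H\{x:\hat\nu(x)\ge\hat\nu\}\le\inf_{\hat\nu'<\hat\nu}d(\hat\nu')=\lim_{\hat\nu'\uparrow\hat\nu}d(\hat\nu')=d(\hat\nu)$, while $\mathcal{U}(y,\hat\nu)\subseteq\{x:\hat\nu(x)\ge\hat\nu\}$ gives the opposite inequality; hence
\[
\dim_H\{x:\hat\nu(x)\ge\hat\nu\}=\dim_H\mathcal{U}(y,\hat\nu),
\]
and in particular $\dim_H E(\hat\nu)\le\dim_H\mathcal{U}(y,\hat\nu)$.

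It remains to prove $\dim_H E(\hat\nu)\ge\dim_H\mathcal{U}(y,\hat\nu)$, which is the crux. For $\hat\nu=0$ this is immediate, since $E(0)\supseteq\{x:\nu(x)=0\}$ has full Lebesgue measure by Lemma~\ref{full} (recall $\hat\nu(x)\le\nu(x)$), so $\dim_H E(0)=1=d(0)$. For $\hat\nu>1$ both sides equal $0$. For $\hat\nu=1$ either $s(1,y)=0$, in which case all three sets have dimension $0$ by the previous paragraph, or $s(1,y)>0$, in which case $d$ has a downward jump at $1$, so $\dim_H\{x:\hat\nu(x)>1\}=\lim_{\hat\nu'\downarrow 1}d(\hat\nu')=0<d(1)$ (here $\{x:\hat\nu(x)>1\}=\bigcup_{\hat\nu'>1}\mathcal{U}(y,\hat\nu')$ is a countable union by monotonicity), and the decomposition above forces $\dim_H E(1)=\dim_H\{x:\hat\nu(x)\ge1\}=d(1)$. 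For $0<\hat\nu<1$, however, $d$ is continuous, so $\{x:\hat\nu(x)>\hat\nu\}=\bigcup_{\hat\nu'>\hat\nu}\mathcal{U}(y,\hat\nu')$ already has full dimension $d(\hat\nu)$ and no purely set-theoretic argument survives; here the plan is to revisit the lower-bound construction of Theorem~\ref{TZ1}, which produces for every $\varepsilon>0$ a Cantor set $F_\varepsilon\subseteq\mathcal{U}(y,\hat\nu)$ with $\dim_H F_\varepsilon\ge d(\hat\nu)-\varepsilon$, and to check that it can be arranged so that $F_\varepsilon\subseteq E(\hat\nu)$. Concretely: (i) the lengths of the prescribed blocks of the partial quotient $i$ are tuned exactly to the threshold $|I_N(y)|^{\hat\nu}$, so the approximations produced are no deeper than this threshold and $\hat\nu(x)\le\hat\nu$; and (ii) in the ``free'' blocks of $F_\varepsilon$ one forbids the digit $i$ from occurring in runs longer than a fixed length, which keeps $|T^n(x)-y|$ above a fixed positive constant for all $n$ outside the prescribed blocks and thus rules out any extra, deeper recurrence to $y$ that could push $\hat\nu(x)$ above $\hat\nu$. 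Together (i)--(ii) give $\hat\nu(x)=\hat\nu$ for all $x\in F_\varepsilon$, so $\dim_H E(\hat\nu)\ge\sup_{\varepsilon>0}\dim_H F_\varepsilon=d(\hat\nu)$.

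Combining the last two steps, $\dim_H\mathcal{U}(y,\hat\nu)\le\dim_H E(\hat\nu)\le\dim_H\{x:\hat\nu(x)\ge\hat\nu\}=\dim_H\mathcal{U}(y,\hat\nu)$ for every $\hat\nu\ge0$, which is the assertion. I expect the main obstacle to be precisely the construction in the range $0<\hat\nu<1$: because the continuity of $d$ makes the overshoot set $\{x:\hat\nu(x)>\hat\nu\}$ full-dimensional, one genuinely has to exhibit a full-dimensional Cantor set living exactly on the level set $\{x:\hat\nu(x)=\hat\nu\}$ — that is, to verify that the construction behind Theorem~\ref{TZ1} already has this property, or to adapt it (tuning the recurrence blocks to the threshold and forbidding long runs of $i$ in the free blocks) so that it does.
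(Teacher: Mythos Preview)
Your proposal is correct, but the route differs from the paper's in an instructive way.

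The paper does not deduce Theorem~\ref{TZ3} from Theorem~\ref{TZ1}; both are derived as corollaries of Theorem~\ref{TZ2} (the two-parameter result for $E(\hat\nu,\nu)$). For the upper bound of $\dim_H\{x:\hat\nu(x)\ge\hat\nu\}$ the paper does \emph{not} use your continuity argument: it instead slices the set as $\bigcup_{\nu\in\mathbb Q^+}E_\varepsilon(\hat\nu,\nu)$ with $E_\varepsilon(\hat\nu,\nu)=\{x:\hat\nu(x)\ge\hat\nu,\ \nu\le\nu(x)\le(\nu+\varepsilon)/(1-\varepsilon)\}$ and re-runs the covering estimates from Section~3 on each slice, then optimises over $\nu$. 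Your argument via the inclusion $\{x:\hat\nu(x)\ge\hat\nu\}\subseteq\bigcap_{\hat\nu'<\hat\nu}\mathcal U(y,\hat\nu')$ together with left-continuity of $d(\hat\nu)$ (from Theorem~\ref{TZ1} and Lemma~\ref{lem210}) is genuinely shorter and avoids repeating that covering machinery.

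For the lower bound of $\dim_H E(\hat\nu)$, however, the paper's approach is the cleaner one. You correctly identify that for $0<\hat\nu<1$ no soft argument survives and a Cantor set inside the exact level set is needed; you then propose to go back to the construction behind Theorem~\ref{TZ1} and ``adapt'' it (tune the block lengths, forbid long runs of $i$). In the paper this step is a one-liner: since $E(\hat\nu,\nu)\subseteq E(\hat\nu)$ whenever $\hat\nu\le\nu/(1+\nu)$, Theorem~\ref{TZ2} gives $\dim_H E(\hat\nu)\ge s\big(\tfrac{\nu^2}{(1+\nu)(\nu-\hat\nu)},\tau(i)\big)$ for all such $\nu$, and taking $\nu=2\hat\nu/(1-\hat\nu)$ yields exactly $s\big(\tfrac{4\hat\nu}{(1+\hat\nu)^2},\tau(i)\big)$. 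No adaptation is needed because the Cantor set built in Section~4 already lives in $E(\hat\nu,\nu)$ --- the insertion of the digit $d>B$ via the map $f$ is precisely your item~(ii), and the choice of $\{n_k\},\{m_k\}$ is precisely your item~(i). So what you plan to verify is already the content of Theorem~\ref{TZ2}, and invoking it directly (rather than re-deriving it through Theorem~\ref{TZ1}) is the more natural organisation.
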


Actually, Theorems \ref{TZ1} and \ref{TZ3} follow from the following more general result which gives the Hausdorff dimension of the set
\begin{equation*}
  E(\hat{\nu},\nu)=\{x\in[0,1)\colon \hat{\nu}(x)=\hat{\nu},~\nu(x)=\nu\}.
\end{equation*}

\begin{thm}\label{TZ2}
    Given two nonnegative real numbers $\hat{\nu}$ and $\nu$ with $\hat\nu\le\nu$, we have
       \begin{equation*}
         \dim_{H}E(\hat{\nu},\nu)
       =\left\{\begin{array}{ll}  1, & \ \ \ \text{if }~ \nu=0;\\
                                  s\Big(\frac{\nu^{2}}{(1+\nu)(\nu-\hat{\nu})},{y}\Big), & \ \ \ \text{if }~ 0\le\hat{\nu}\le\frac{\nu}{1+\nu}<\nu\leq\infty; \\
                                  0, & \ \ \ \text{otherwise.}
       \end{array}
      \right.
     \end{equation*}
    Here,  we take  $\frac{\nu^{2}}{(1+\nu)(\nu-\hat{\nu})}=1$ when $\nu=\infty$.
\end{thm}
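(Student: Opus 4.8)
The plan is to compute $\dim_H E(\hat\nu,\nu)$ by establishing a matching lower and upper bound, treating $E(\hat\nu,\nu)$ as the intersection of a prescribed-asymptotic-rate set with a prescribed-uniform-rate set. The starting point is to translate the condition $|T^n(x)-y|<|I_N(y)|^{\hat\nu}$ into a statement about the continued fraction digits of $x$. Since $y=[i,i,i,\dots]$ is a fixed quadratic irrational, the inequality $|T^n(x)-y|<|I_N(y)|^{\hat\nu}$ forces a long initial block of the digits of $T^n(x)$ to coincide with $(i,i,\dots)$, and conversely; moreover $|I_N(y)|\asymp q_N(y)^{-2}\asymp e^{-2g(y)N}$, so the target size is essentially $e^{-2g(y)\hat\nu N}$. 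Thus being in $E(\hat\nu,\nu)$ becomes a combinatorial constraint on the digit sequence $(a_k(x))$: there must be, for every large $N$, some $n\le N$ after which a block of roughly $\big(2g(y)\hat\nu N\big)/\big(2g(y)\big)=\hat\nu N$ consecutive digits equals $i$ — more precisely the length of the forced run, together with the position $n$ and the size $a_{n}(x)$ of the "large" digit preceding it, is what controls everything. Writing $x$ near $y$ with a large partial quotient at a controlled place is exactly the device used in \cite{LWWX} for the asymptotic exponent, and Theorem~\ref{TZ2} is the uniform refinement.

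The heart of the argument is a \emph{covering/optimization lemma} at a single scale. Fix $N$ large. A point $x\in E(\hat\nu,\nu)$ has its digits up to some level $m(N)\approx (1+\nu)N$ (this is where the asymptotic exponent $\nu$ enters: to realize $|T^n(x)-y|\approx |I_n(y)|^\nu$ infinitely often one needs a run of length $\approx \nu n$, so digits up to level $\approx n(1+\nu)$ are constrained, but here the binding constraint comes from the uniform condition at scale $N$) partially frozen, and one must cover the corresponding cylinder set. I would set up the cover by: (i) for each candidate $n\in[1,N]$, freezing $a_{n}(x)=t$ large and $a_{n+1}(x)=\dots=a_{n+L}(x)=i$ with $L\approx\hat\nu N$; (ii) letting the remaining free digits range. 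The cylinder $[a_1,\dots,a_{n-1},t,i,\dots,i]$ of length $n+L$ has Lebesgue measure $\asymp q_{n+L}^{-2}$, and since $a_{n+1}\cdots a_{n+L}$ are all equal to $i$ we get $q_{n+L}\asymp q_{n-1}\cdot t\cdot q_L(y)\asymp q_{n-1}\, t\, e^{g(y)L}$. Optimizing the parameter $n$ (equivalently the ratio $n/N$) against the cost of the large digit $t$ — which is itself pinned by requiring $|T^n(x)-y|\approx e^{-2g(y)\hat\nu N}$, forcing $t\asymp e^{2g(y)\hat\nu N - \text{(run contribution)}}$ — is what produces the ratio $\frac{\nu^2}{(1+\nu)(\nu-\hat\nu)}$ inside $s(\cdot,y)$. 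Concretely I expect the exponent $\beta$ fed into $s(\beta,y)$ to arise as $\beta=\lim \log q_\ell/\ell$ along the relevant subsequence of levels, and the constrained variational problem — maximize the "dimension" of the free part subject to the frozen blocks summing to the right total length with the right weights — to have $\frac{\nu^2}{(1+\nu)(\nu-\hat\nu)}$ as its optimal value. The pressure-function/Bowen-equation machinery, together with the known fact that $s(\beta,y)$ interpolates continuously from $1$ (at $\beta=0$) to $1/2$ (as $\beta\to\infty$), then identifies $\dim_H E(\hat\nu,\nu)=s\big(\tfrac{\nu^2}{(1+\nu)(\nu-\hat\nu)},y\big)$.

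For the \textbf{lower bound} I would construct a Cantor-type subset of $E(\hat\nu,\nu)$: fix a rapidly increasing sequence $N_j$, and on the blocks "between" the frozen runs let the digits be chosen freely from a large but bounded alphabet $\{1,\dots,M\}$ (so that the free part carries dimension close to $s(\beta,y)$ for the appropriate $\beta$, using the standard continued-fraction pressure estimates), while at the prescribed levels near each $N_j$ insert the block $(t_j,i,\dots,i)$ with $t_j$ and the run length $L_j$ chosen to achieve exactly $\hat\nu(x)=\hat\nu$ and $\nu(x)=\nu$. A mass distribution / Frostman argument on this Cantor set, with the usual care about the "extra" decay in diameter caused by the large digits $t_j$ (this is the point where continued fractions differ from $\beta$-expansions and where one uses that the $t_j$ occur sparsely, at density $\to 0$ among the levels, so they do not lower the dimension), gives the bound. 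The boundary/degenerate cases — $\nu=0$ forcing $\hat\nu=0$ and $\dim_H=1$ by Lemma~\ref{full}; the region $\hat\nu>\frac{\nu}{1+\nu}$ being empty hence dimension $0$ (this inequality is exactly the compatibility constraint between a uniform run of length $\approx\hat\nu N$ appearing by time $N$ and the asymptotic run of length $\approx\nu n$, worked out by a pigeonhole on where the long asymptotic run sits relative to $[1,N]$); and $\nu=\infty$ — are handled separately by direct estimates, with the convention $\frac{\nu^2}{(1+\nu)(\nu-\hat\nu)}\to 1$ as $\nu\to\infty$ matching $s(1,y)=\dim_H\{x:\nu(x)=\infty\}$.

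I expect the \textbf{main obstacle} to be the upper bound, specifically the single-scale covering lemma: one must show that the naive cover described above is essentially optimal, i.e. that a point of $E(\hat\nu,\nu)$ \emph{cannot} avoid having, for infinitely many $N$, a frozen configuration of the stated shape with parameters no better than the optimal ones — and then sum the resulting cover over all admissible $(n,t,L)$ and over a subsequence of scales $N_j\to\infty$ while keeping the $s$-dimensional sum bounded. The delicate points are (a) extracting the correct level $m=m(N)$ at which to cut the cylinders — it depends on both $\nu$ and $\hat\nu$ and on the actual digits of $x$, not just on $N$ — and (b) controlling the contribution of the large digit $t$, since $\sum_t t^{-2s}$ converges only for $s>1/2$, which is why the answer never drops below $1/2$ but also why the estimate is tight precisely at the pressure solution. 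This is the standard but technically heaviest part of such multifractal computations in continued fractions, and it is where I would spend most of the effort, modeling the bookkeeping on \cite{LWWX} and \cite{BL} but carrying the extra parameter $\nu$ through.
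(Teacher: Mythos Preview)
Your architecture (cover for the upper bound, Cantor set plus mass distribution for the lower bound, boundary cases handled separately) matches the paper, but the mechanism is off. The ``large digit $t$'' is a phantom: for $y=[i,i,\dots]$, the condition $|T^n(x)-y|<|I_N(y)|^{\hat\nu}$ is equivalent, up to constants, to $a_{n+1}(x)=\cdots=a_{n+L}(x)=i$ with $L\approx\hat\nu N$. The digit $a_n(x)$ is irrelevant (it does not appear in $T^n(x)=[a_{n+1}(x),a_{n+2}(x),\dots]$), so there is no $t$ to optimize over and no sum $\sum_t t^{-2s}$ to worry about. In the paper's lower bound a \emph{fixed} separator digit $d>B$ is inserted between blocks, but only to delimit the runs and pin down $\hat\nu(x),\nu(x)$ exactly; it is not part of the approximation mechanism and does not grow (for finite $\nu$).

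More seriously, $\xi=\frac{\nu^2}{(1+\nu)(\nu-\hat\nu)}$ does not come from a single-scale optimization of ``where to place the run in $[1,N]$''. The paper associates to each $x$ the sequence of maximal increasing $i$-runs $(n_k,m_k)$, translates the exponents as $\liminf_k(m_k-n_k)/n_{k+1}=\hat\nu$ and $\limsup_k(m_k-n_k)/n_k=\nu$, and then applies Stolz--Ces\`aro to deduce that for infinitely many $k$ the \emph{cumulative} frozen proportion satisfies $\sum_{j\le k}(m_j-n_j)\ge(\xi-\varepsilon)m_k$. The cover is by cylinders $I_{m_k}$ with that many positions forced to equal $i$, and the pre-dimension machinery converts ``proportion $\xi$ frozen'' into ``dimension $\le s(\xi,y)$''. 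A single run at one scale only freezes a proportion $\approx\nu/(1+\nu)$ of the digits (this is what the asymptotic condition alone gives); the additional $\frac{\hat\nu\nu}{(1+\nu)(\nu-\hat\nu)}$ in $\xi$ comes from the accumulated earlier runs forced by the uniform condition, and this cumulative bookkeeping---together with the exponential growth of $m_k$ needed to control the combinatorics of the cover---is the key idea you are missing for the upper bound.
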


Let us make the following remarks regarding Theorems \ref{TZ1}-\ref{TZ2}:
\begin{itemize}
\item These results remain valid for  any quadratic irrational number $y$. Indeed, by Lagrange's theorem, any such $y$ is represented by a periodic continued fraction expansion, i.e., $$y=[a_1,a_2,\ldots,a_{k_0},\overline{a_{k_{0}+1},\ldots,a_{k_{0}+h}}]$$
    for some positive integers $k_{0}$ and $h.$
   A slight change (replacing  the block  $[i]$ by the periodic block
     $[a_{k_{0}+1},\ldots,a_{k_{0}+h}]$) in the proofs actually shows that   Theorems \ref{TZ1}-\ref{TZ2}   still hold for every quadratic irrational number $y\in[0,1).$

     \medskip

\item The fractal sets $\mathcal{U}(y,\hat{\nu}),$ $E(\hat{\nu})$ and $E(\hat{\nu},\nu)$ are not the so-called limsup sets,
and thus we cannot obtain a natural covering to estimate the upper bound of the Hausdorff dimensions of the sets $\mathcal{U}(y,\hat{\nu})$ and $E(\hat{\nu},\nu)$.  To overcome this difficulty, we need a better understanding on the fractal structure of these sets;  the previous work of Bugeaud $\&$ Liao \cite{BL} helps.
\end{itemize}

Combining (\ref{Asy}) and Theorem \ref{TZ2}, we obtain the dimension of  the level set related to the asymptotic exponent $\nu(x)$.

\begin{cor}\label{Cor}
Given a nonnegative real number $\nu,$ we have
$$\dim_{H}\{x\in[0,1)\colon \nu(x)=\nu\}=s\Big(\frac{\nu}{1+\nu},{y}\Big).$$
\end{cor}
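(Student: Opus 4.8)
The plan is to sandwich the level set $\{x\in[0,1)\colon\nu(x)=\nu\}$ between a set whose dimension is already computed in $(\ref{Asy})$ and a single slice of Theorem \ref{TZ2}. For the \emph{upper bound} no interaction between the two exponents is needed: one uses only the trivial inclusion
\[
\{x\in[0,1)\colon\nu(x)=\nu\}\subseteq\{x\in[0,1)\colon\nu(x)\ge\nu\},
\]
monotonicity of Hausdorff dimension, and the formula $(\ref{Asy})$ of Li, Wang, Wu \& Xu, which together give at once $\dim_{H}\{x\colon\nu(x)=\nu\}\le s\big(\tfrac{\nu}{1+\nu},y\big)$.

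For the matching \emph{lower bound} when $\nu>0$, I would isolate the slice $\hat\nu=0$ in Theorem \ref{TZ2}. The pair $(\hat\nu,\nu)=(0,\nu)$ satisfies $0\le 0\le\tfrac{\nu}{1+\nu}<\nu\le\infty$, the strict inequality $\tfrac{\nu}{1+\nu}<\nu$ being equivalent to $\nu>0$, so Theorem \ref{TZ2} applies and yields
\[
\dim_{H}E(0,\nu)=s\Big(\frac{\nu^{2}}{(1+\nu)(\nu-0)},y\Big)=s\Big(\frac{\nu}{1+\nu},y\Big),
\]
where for $\nu=\infty$ both sides are read as $s(1,y)$ via the convention recorded in Theorem \ref{TZ2}. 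Since $E(0,\nu)\subseteq\{x\colon\nu(x)=\nu\}$, monotonicity of dimension gives $\dim_{H}\{x\colon\nu(x)=\nu\}\ge s\big(\tfrac{\nu}{1+\nu},y\big)$, and combined with the first paragraph this settles every $\nu>0$.

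It remains to treat the endpoint $\nu=0$. Here $E(0,0)\subseteq\{x\colon\nu(x)=0\}$ has Hausdorff dimension $1$ by the first branch of Theorem \ref{TZ2} (equivalently, the level set has full Lebesgue measure by Lemma \ref{full}), so $\dim_{H}\{x\colon\nu(x)=0\}=1$; on the other hand $s(0,y)$ is the zero of $s\mapsto P(T,-s\log|T'|)$, which equals $1$ by the classical identity $P(T,-\log|T'|)=0$ for the Gauss map, so the stated formula is again correct at $\nu=0$. I do not expect a substantive obstacle in any of this: the only points that need a line of verification are the algebraic simplification $\tfrac{\nu^{2}}{(1+\nu)(\nu-\hat\nu)}\big|_{\hat\nu=0}=\tfrac{\nu}{1+\nu}$, the observation that $(0,\nu)$ lies in the nondegenerate range of Theorem \ref{TZ2} exactly when $\nu>0$, and recording $s(0,y)=1$; everything else is the two one-line inclusions above, which is precisely why the statement is a corollary rather than a theorem.
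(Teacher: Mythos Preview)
Your argument is correct and matches the paper's own one-line justification, which simply says the corollary follows by ``combining (\ref{Asy}) and Theorem \ref{TZ2}'': you use (\ref{Asy}) for the upper bound via the inclusion $\{\nu(x)=\nu\}\subseteq\{\nu(x)\ge\nu\}$, and the slice $E(0,\nu)$ of Theorem \ref{TZ2} for the lower bound, with the endpoint $\nu=0$ handled by Lemma \ref{full} and $s(0,y)=1$. One small remark on logical hygiene: since the paper's proof of the \emph{upper} bound of $\dim_H E(0,\nu)$ in Section \ref{S_3} actually invokes Corollary \ref{Cor}, it is worth noting (as your argument implicitly does) that you only need the \emph{lower} bound $\dim_H E(0,\nu)\ge s\big(\tfrac{\nu}{1+\nu},y\big)$, which is established independently in Section \ref{S_4}; so there is no circularity.
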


\subsection{Run-length function}
Applying the main ideas of the proofs of Theorems $\ref{TZ1}$ and \ref{TZ2}, we characterize the multifractal properties of run-length function in continued fractions.

The run-length function was initially introduced  in a mathematical experiment of cion tossing,
which counts the consecutive occurrences of `heads' in $n$ times trials.
This function has been extensively studied for a long time. 
For  $x\in[0,1],$ let $r_n(x)$ be the dyadic run-length function of $x,$ namely,
the longest run of 0's in the first $n$ digits of the dyadic expansion of $x.$
Erd\"{o}s $\&$ R\'{e}nyi \cite{E} did a pioneer work on the asymptotic behavior of $r_n(x)\colon$ for Lebesgue almost all $x\in[0,1],$
\begin{equation*}
    \lim_{n\to\infty}\frac{r_n(x)}{\log_2 n}=1.
\end{equation*}

Likewise, we define the run-length function in the continued fraction expansion:
for $n\geq 1$,   the $n$-th maximal run-length function of $x$ is defined as
$$R_{n}(x)=\max\big\{l\geq1\colon a_{i+1}(x)=\cdots=a_{i+l}(x) \text{ for some } 0 \leq i\leq n-l\big\}.$$
Wang $\&$ Wu \cite{W} considered the metric properties of $R_{n}(x)$  and proved that
$$\lim_{n\to\infty}\frac{R_{n}(x)}{\log_{\frac{\sqrt{5}+1}{2}}n}=\frac{1}{2}$$
for almost all $x\in [0,1).$ They also studied the following exceptional sets
\begin{equation*}
    F\big(\{\varphi(n)\}_{n=1}^{\infty}\big)=\Big\{x\in[0,1)\colon \lim_{n\to\infty}\frac{R_{n}(x)}{\varphi(n)}=1\Big\},
\end{equation*}
\begin{equation*}
    G\big(\{\varphi(n)\}_{n=1}^{\infty}\big)=\Big\{x\in[0,1)\colon \limsup\limits_{n\to\infty}\frac{R_{n}(x)}{\varphi(n)}=1\Big\},
\end{equation*}
where $\varphi\colon \mathbb{N} \to \mathbb{R}^{+}$ is a non-decreasing function. They showed that$\colon$

{\rm{(1)}} if $\lim_{n\to\infty}\frac{\varphi(n+\varphi(n))}{\varphi(n)}=1,$
then $\dim_{H}F\big(\{\varphi(n)\}_{n=1}^{\infty}\big)=1;$

{\rm{(2)}} if $\liminf_{n\to\infty}\frac{\varphi(n)}{n}=\beta \in[0,1],$
then $\dim_{H}G\big(\{\varphi(n)\}_{n=1}^{\infty}\big)=s\big(\beta,{\frac{\sqrt{5}+1}{2}}\big)$.

\noindent In the study of Case (2), Wang $\&$ Wu studied essentially the Hausdorff dimension of the following set
\begin{equation}\label{equ13}
G(\beta)=\left\{x\in[0,1)\colon \limsup_{n \to\infty}\frac{R_{n}(x)}{n}=\beta\right\}.
\end{equation}
 Replacing the limsup of the quantity $R_n(x)/n$ in (\ref{equ13}) with liminf, we study the set
$$F(\alpha)=\left\{x\in[0,1)\colon \liminf_{n\to\infty}\frac{R_{n}(x)}{n}=\alpha\right\},$$
and determine the Hausdorff dimension of the intersections of $F(\alpha)\cap G(\beta).$ As a corollary, we  obtain the Hausdorff dimension of $F(\alpha).$
\begin{thm}\label{thm1}
For $\alpha, \beta\in [0,1]$ with $\alpha\leq \beta$, we have
\begin{equation*}
 \dim_{H}\Big(F(\alpha)\cap G(\beta)\Big)=\left\{ \begin{array}{ll}
                            1, & \ \ \ \text{if }~ \beta=0;\\
                            s\Big(\frac{\beta^{2}(1-\alpha)}{\beta-\alpha},{\frac{\sqrt{5}+1}{2}}\Big), & \ \ \ \text{if }~ 0\leq \alpha\leq \frac{\beta}{1+\beta}<\beta\leq1; \\
                            0, & \ \ \ \text{otherwise.}\end{array}\right.
       \end{equation*}
\end{thm}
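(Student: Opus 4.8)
The strategy is to recognise Theorem \ref{thm1} as the run-length incarnation of Theorem \ref{TZ2} for the golden point $y_{1}=[\overline{1}]=(\sqrt{5}-1)/2$, for which $g(y_{1})=\log\frac{1+\sqrt{5}}{2}$, so that $s(\cdot,\tfrac{\sqrt{5}+1}{2})=s(\cdot,y_{1})$. A maximal block $a_{k+1}(x)=\cdots=a_{k+L}(x)=1$ forces $T^{k}(x)$ to share its first $L$ partial quotients with $y_{1}$, whence $|T^{k}(x)-y_{1}|\asymp|I_{L}(y_{1})|\asymp|I_{k}(y_{1})|^{L/k}$; conversely $|T^{k}(x)-y_{1}|<|I_{k}(y_{1})|^{\nu}$ produces a block of roughly $\nu k$ ones beginning at $k+1$. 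Recording, for a point whose only long runs are runs of $1$'s, the positions $k_{j}$ and lengths $L_{j}$ of these blocks, and using that $t\mapsto t/(1+t)$ is continuous and increasing, one obtains
\begin{equation*}
\limsup_{n\to\infty}\frac{R_{n}(x)}{n}=\frac{\nu(x)}{1+\nu(x)},\qquad
\liminf_{n\to\infty}\frac{R_{n}(x)}{n}=\frac{\hat{\nu}(x)}{1+\hat{\nu}(x)};
\end{equation*}
the second identity holds because a uniform approximation $|T^{n}(x)-y_{1}|<|I_{N}(y_{1})|^{\hat{\nu}}$ with $n\le N$ places a block of about $\hat{\nu}N$ ones inside $[1,(1+\hat{\nu})N]$, so $R_{M}(x)\ge\frac{\hat{\nu}}{1+\hat{\nu}}M$ for all large $M$, and conversely. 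Substituting $\nu=\beta/(1-\beta)$, $\hat{\nu}=\alpha/(1-\alpha)$ one checks $\nu/(1+\nu)=\beta$, so the admissible range $0\le\hat{\nu}\le\nu/(1+\nu)<\nu\le\infty$ of Theorem \ref{TZ2} becomes $0\le\alpha\le\beta/(1+\beta)<\beta\le1$, and an elementary computation gives $\frac{\nu^{2}}{(1+\nu)(\nu-\hat{\nu})}=\frac{\beta^{2}(1-\alpha)}{\beta-\alpha}$. The golden point is forced, not chosen: a run may a priori use any digit $i$, but $g(y_{i})=\log\frac{i+\sqrt{i^{2}+4}}{2}$ increases in $i$ while $s(\beta,y)$ decreases in $g(y)$ (raising $g(y)$ makes the potential $-s(\log|T'|+\frac{\beta}{1-\beta}g(y))$ more negative, forcing a smaller root), so runs of $1$'s dominate the dimension.

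For the lower bound I would build a Cantor subset of $F(\alpha)\cap G(\beta)$ following the construction behind Theorem \ref{TZ2} for $y=y_{1}$: alternate long windows consisting entirely of $1$'s, whose lengths and starting positions are scheduled so that $\alpha$ and $\beta$ are realised exactly as the $\liminf$ and the $\limsup$ of $R_{n}(x)/n$, with long windows of ``free'' partial quotients drawn from a finite alphabet $\{1,2,\dots,M\}$ subject to a finite-type restriction that prevents the free windows, and their junctions with the adjacent blocks of $1$'s, from producing any run of length exceeding $1$. This keeps the two run-length ratios equal to the prescribed values while lowering the relevant pressure by an amount that tends to $0$ as $M\to\infty$. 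A mass distribution carried by this set, estimated through the bounded-distortion property of the Gauss map and the pressure equation exactly as in Theorem \ref{TZ2}, then gives $\dim_{H}(F(\alpha)\cap G(\beta))\ge s\bigl(\tfrac{\beta^{2}(1-\alpha)}{\beta-\alpha},\tfrac{\sqrt{5}+1}{2}\bigr)$ after letting $M\to\infty$. When $\beta=0$ (hence $\alpha=0$) the set has full measure, since $R_{n}\asymp\log n$ almost everywhere forces $R_{n}/n\to0$, and so has dimension $1$.

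For the upper bound one must keep in mind, as noted in the remarks following Theorem \ref{TZ2}, that $F(\alpha)\cap G(\beta)$ is not a $\limsup$ set, so there is no canonical cover; I would use the Bugeaud--Liao covering scheme from the proof of Theorem \ref{TZ2}. Membership in $G(\beta)$ supplies, for infinitely many $N$, a run of length about $\beta N$ inside $[1,N]$, and membership in $F(\alpha)$ supplies, for every large $N$, a run of length at least $(\alpha-\varepsilon)N$ inside $[1,N]$; together these pin down enough of the digit string to cover $x$ by a cylinder of controlled length. The one feature absent in Theorem \ref{TZ2} is that a long run may use an arbitrary digit $d$: such a run of length $L$ at position $k$ lies in a cylinder of length $\asymp|I_{k}|\,\lambda_{d}^{-2L}$ with $\lambda_{d}=\frac{d+\sqrt{d^{2}+4}}{2}\ge\lambda_{1}=\frac{1+\sqrt{5}}{2}$, so when $\sum|I|^{s}$ is summed over positions, lengths and over $d$, the auxiliary series $\sum_{d}\lambda_{d}^{-2sL}$ converges and is dominated by its $d=1$ term. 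Hence the critical exponent is exactly the one produced by runs of $1$'s, namely $s\bigl(\tfrac{\beta^{2}(1-\alpha)}{\beta-\alpha},\tfrac{\sqrt{5}+1}{2}\bigr)$, and it drops to $0$ once $\alpha>\beta/(1+\beta)$, in which range the covering series converges for every $s>0$.

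The main obstacle is the upper bound: since the set is not a $\limsup$ set, extracting an efficient cover forces one to use the liminf and limsup hypotheses simultaneously and to bookkeep carefully the admissible locations and lengths of the long runs together with the unbounded choice of the repeated digit --- precisely where the fine analysis of the fractal structure of such sets, taken from \cite{BL} and the proof of Theorem \ref{TZ2}, does the work. A secondary, technical point is to make the correspondence between the run-length ratios and the exponents $\hat{\nu}(x),\nu(x)$ exact at the boundary values of $\alpha,\beta$, and to verify that the finite-type restriction in the lower-bound construction costs no dimension as $M\to\infty$.
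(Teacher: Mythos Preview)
Your proposal is essentially correct and follows the same overall strategy as the paper: a direct covering argument for the upper bound with an extra geometric sum over the repeated digit, and a Cantor subset built from scheduled blocks of $1$'s for the lower bound. The paper does not formally reduce to Theorem~\ref{TZ2} via the correspondence $\beta=\nu/(1+\nu)$, $\alpha=\hat\nu/(1+\hat\nu)$ you outline (which, as you note, only holds for points whose long runs consist of $1$'s), but instead reruns the covering and mass-distribution arguments directly in the run-length setting: it extracts the sequences $\{n_k\},\{m_k\}$ of maximal-run endpoints and establishes the key inequality $\sum_{i\le k}(m_i-n_i+1)\ge m_k(\xi-\varepsilon)$ via Stolz--Ces\`aro exactly as in Section~\ref{S_3}, with the extra sum over the digit $b_j$ handled by the comparison $q_n(i,\dots,i)\ge \tfrac14(\tau(i)/\tau(1))^n q_n(1,\dots,1)$, which is precisely your observation that the series $\sum_d\lambda_d^{-2sL}$ converges and is dominated by $d=1$.

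The one mechanical difference is in the lower bound. You propose a finite-type restriction on the free windows forbidding consecutive equal digits; the paper instead lets the free digits range freely over $\{1,\dots,B\}$ but then inserts a fixed digit $d>B$ at a density-zero set of positions (one every $m_k-n_k$ steps) to cap unwanted runs, and invokes Lemma~\ref{lem214} (Chang--Chen) to conclude that this insertion costs no Hausdorff dimension. Your finite-type variant should also work, but showing that its pressure loss vanishes as $M\to\infty$ is a separate argument not already available in the paper, whereas the density-zero insertion plugs directly into an existing lemma. For the range $\alpha>\beta/(1+\beta)$ the paper shows the set is at most countable (from $\liminf_k(m_k-n_k)/n_{k+1}=\alpha/(1-\alpha)\le\limsup_k(m_k-n_k)/m_k=\beta$), which is more direct than arguing that the covering series converges for every $s>0$.
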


\begin{thm}\label{thm2}
For $\alpha\in[0,1],$ we have
\begin{equation*}
 \dim_{H}F(\alpha)=\left\{\begin{array}{ll}
                                            s\big(4\alpha(1-\alpha),{\frac{\sqrt{5}+1}{2}}\big), & \ \ \ \text{if }~ 0\leq \alpha\leq \frac{1}{2}; \\
                                            0, & \ \ \ \text{otherwise.} \end{array}\right.
         \end{equation*}
         \end{thm}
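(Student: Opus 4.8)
The plan is to deduce Theorem~\ref{thm2} from Theorem~\ref{thm1} by splitting $F(\alpha)$ according to the value $\beta(x):=\limsup_n R_n(x)/n$. Since $\alpha=\liminf_n R_n(x)/n\le\beta(x)\le 1$ for every $x\in F(\alpha)$, we have
\begin{equation*}
F(\alpha)=\bigcup_{\beta\in[\alpha,1]}\big(F(\alpha)\cap G(\beta)\big),
\end{equation*}
so the content of the theorem should be that $\dim_H F(\alpha)$ equals the supremum over $\beta\in[\alpha,1]$ of $\dim_H\big(F(\alpha)\cap G(\beta)\big)$, which Theorem~\ref{thm1} already evaluates. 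Writing $g_\alpha(\beta)=\tfrac{\beta^{2}(1-\alpha)}{\beta-\alpha}$, Theorem~\ref{thm1} gives $\dim_H(F(\alpha)\cap G(\beta))=s\big(g_\alpha(\beta),\tfrac{\sqrt{5}+1}{2}\big)$ when $\tfrac{\alpha}{1-\alpha}\le\beta\le1$ and $0$ otherwise (the values on $[\alpha,\tfrac{\alpha}{1-\alpha})$ being $0$, hence irrelevant to the supremum). As $\beta\mapsto s(\beta,y)$ is continuous and strictly decreasing on $[0,1]$ (a standard property of the pressure function, with $s(0,y)=1$ and $s(1,y)=\tfrac12$ interpreted as the limiting value), maximizing the dimension over $\beta$ amounts to minimizing $g_\alpha$; an elementary computation gives $g_\alpha'(\beta)=0$ exactly at $\beta=2\alpha$, which lies in $[\tfrac{\alpha}{1-\alpha},1]$ precisely when $\alpha\le\tfrac12$, with $g_\alpha(2\alpha)=4\alpha(1-\alpha)$. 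So the target value is $s\big(4\alpha(1-\alpha),\tfrac{\sqrt{5}+1}{2}\big)$ for $0\le\alpha\le\tfrac12$.

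For the lower bound I would simply take $\beta=2\alpha$: for $0<\alpha\le\tfrac12$ this is an admissible parameter in Theorem~\ref{thm1}, whence $\dim_H F(\alpha)\ge\dim_H\big(F(\alpha)\cap G(2\alpha)\big)=s\big(4\alpha(1-\alpha),\tfrac{\sqrt{5}+1}{2}\big)$. For $\alpha=0$ one instead notes that, by Wang--Wu's law $R_n(x)\sim\tfrac12\log_{\frac{\sqrt{5}+1}{2}}n$ a.e., the set $F(0)$ has full Lebesgue measure, so $\dim_H F(0)=1=s(0,\tfrac{\sqrt{5}+1}{2})$. For $\alpha>\tfrac12$ there is no admissible $\beta$ and the claim is $\dim_H F(\alpha)=0$; here a direct overlap argument suffices: if $\liminf_n R_n(x)/n>\tfrac12$ then at all large scales the longest run occupies more than half the digits, so the longest runs at consecutive scales must intersect, hence coincide and merge into a single infinite run, forcing $x$ to be eventually periodic of period $1$; thus $F(\alpha)$ is countable for $\alpha>\tfrac12$.

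The remaining, and genuinely delicate, part is the upper bound $\dim_H F(\alpha)\le s\big(4\alpha(1-\alpha),\tfrac{\sqrt{5}+1}{2}\big)$. The obstacle is that the decomposition of $F(\alpha)$ above is an \emph{uncountable} union, so Theorem~\ref{thm1} does not formally bound $\dim_H F(\alpha)$ (the Hausdorff dimension of an uncountable union may exceed the supremum of the dimensions of its members), and $F(\alpha)$ is not a limsup set, so there is no ready-made economical cover. I would resolve this by a countable decomposition together with a parameter-uniform rerun of the covering estimate behind the upper bound of Theorem~\ref{thm1}. Fix a countable family of closed subintervals $J_1,J_2,\dots$ with $\bigcup_i J_i=[\alpha,1]$; then $F(\alpha)=\bigcup_i\big(F(\alpha)\cap\{x:\beta(x)\in J_i\}\big)$, and by countable stability of Hausdorff dimension it suffices to bound each piece. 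A point $x$ in the piece attached to $J=[\beta^{-},\beta^{+}]$ satisfies, for every small $\epsilon>0$ and all large $n$, both $R_n(x)\ge(\alpha-\epsilon)n$ and $R_n(x)\le(\beta^{+}+\epsilon)n$, while $R_n(x)\le(\alpha+\epsilon)n$ and $R_n(x)\ge(\beta^{-}-\epsilon)n$ each hold along some sequence $n\to\infty$. These are exactly the combinatorial ingredients that drive the upper bound in the proof of Theorem~\ref{thm1} for a fixed $\beta$: one pairs a ``valley'' scale $n$ (longest run $\approx\alpha n$) with a later scale $N$ comparable to $n$ at which a run of length $\approx\beta N$ must appear, necessarily lying almost entirely in $(n,N]$ because of the valley, counts the cylinders $I_N$ consistent with this, and estimates $|I_N|\asymp q_N^{-2}$ with the extremal configuration being a run of $1$'s, so that the ``free'' partial quotients are governed by the pressure function. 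Running this with the relaxed inequalities above produces, for any $s'>\sup\{s(g_\alpha(\beta),\tfrac{\sqrt{5}+1}{2}):\beta\in[\beta^{-}-\epsilon,\beta^{+}+\epsilon]\}$, a cover of the piece with vanishing $s'$-mass; letting $\epsilon\to0$ and using the continuity of $\beta\mapsto s(g_\alpha(\beta),\tfrac{\sqrt{5}+1}{2})$ gives $\dim_H\big(F(\alpha)\cap\{x:\beta(x)\in J_i\}\big)\le\sup_{\beta\in J_i}s(g_\alpha(\beta),\tfrac{\sqrt{5}+1}{2})$. Taking the supremum over $i$ yields $\dim_H F(\alpha)\le\sup_{\beta\in[\alpha,1]}s(g_\alpha(\beta),\tfrac{\sqrt{5}+1}{2})=s\big(4\alpha(1-\alpha),\tfrac{\sqrt{5}+1}{2}\big)$ (and $0$ when $\alpha>\tfrac12$), completing the argument. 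The step I expect to cost the most effort is exactly this parameter-uniform rerun: one must check that the natural covers used for $F(\alpha)\cap G(\beta)$ depend on $\beta$ only through quantities that vary continuously in $\beta$, so that spreading $\beta$ over a short interval loses nothing in the exponent.
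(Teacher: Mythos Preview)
Your proposal is correct and follows essentially the same route as the paper. In particular, the paper handles the lower bound exactly as you do (take $\beta=2\alpha$), and for the upper bound it makes precisely the countable decomposition you anticipate: writing $E_{\alpha,\beta,\varepsilon}=\{x:\liminf R_n/n=\alpha,\ \beta\le\limsup R_n/n\le\beta+\varepsilon\}$ for rational $\beta$, covering $F(\alpha)$ by $\bigcup_{\beta\in\mathbb{Q}^+}E_{\alpha,\beta,\varepsilon}\cup(F(\alpha)\cap G(1))$, and then rerunning the covering estimate from Theorem~\ref{thm1} uniformly over the short $\beta$-interval to get $\dim_H E_{\alpha,\beta,\varepsilon}\le s\big(\tfrac{\beta^2(1-\alpha)}{\beta-\alpha+\varepsilon},\tau(1)\big)$, before optimizing in $\beta$ and letting $\varepsilon\to0$. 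The one place your outline differs is the case $\alpha>\tfrac12$: the paper does not argue via overlapping runs but simply invokes the constraint $\alpha\le\tfrac{\beta}{1+\beta}$ (established in the proof of Theorem~\ref{thm1}) to conclude that any non-periodic $x$ is excluded, hence $F(\alpha)$ is countable; your overlap heuristic reaches the same conclusion but would need a little more care at the boundary (two runs of length just above $n/2$ in $\{1,\dots,n+1\}$ need not overlap), so if you write it up you may prefer to cite the $\alpha\le\tfrac{\beta}{1+\beta}$ inequality instead.
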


\section{Preliminaries}

\subsection{Properties of continued fractions}

This section is devoted to recalling some elementary properties in continued fractions.
For more information on the continued fraction expansion, the readers are referred to \cite{HW,K,S}.
We also introduce some basic techniques for estimating the Hausdorff dimension of a fractal set (see \cite{F,SF}).

For any irrational number $x\in[0,1)$ with continued fraction expansion (\ref{e2}), we write
            $\frac{p_{n}(x)}{q_{n}(x)}=[a_{1}(x),\ldots,a_{n}(x)]$
and call it the $n$-th convergent of $x.$ With the conventions
           $p_{-1}(x)=1,$ $q_{-1}(x)=0,$ $p_{0}(x)=0$ and $q_{0}(x)=1,$
we know that $p_{n}(x)$ and $q_{n}(x)$ satisfy the recursive relations \cite{K}$\colon$
\begin{equation}\label{equ21}
  p_{n+1}(x)=a_{n+1}(x)p_{n}(x)+p_{n-1}(x),~~q_{n+1}(x)=a_{n+1}(x)q_{n}(x)+q_{n-1}(x), ~~n\geq 0.
\end{equation}
Clearly, $q_{n}(x)$ is determined by $a_{1}(x),\ldots,a_{n}(x),$ so we also write
            $q_{n}(a_{1}(x),\ldots,a_{n}(x))$
instead of $q_n(x)$. We write $a_{n}$ and $q_n$ in place of $a_{n}(x)$ and $q_n(x)$ for simplicity when no confusion can arise.

\begin{lem} [\cite{K}]\label{lem21}
For  $n\geq 1$ and $(a_{1},\ldots,a_{n})\in \mathbb{N}^{n}$, we have$\colon$

{\rm{(1)}} $q_{n}\geq 2^{\frac{n-1}{2}},$ and $\prod\limits_{k=1}^{n}a_{k}\leq q_{n}\leq \prod\limits_{k=1}^{n}(a_{k}+1).$

{\rm{(2)}} For any  $k\ge 1,$
\begin{equation*}
1\leq \frac{q_{n+k}(a_{1},\ldots,a_{n},a_{n+1},\ldots,a_{n+k})}
{q_{n}(a_{1},\ldots,a_{n})q_{k}(a_{n+1},\ldots,a_{n+k})}\leq 2.
\end{equation*}

{\rm{(3)}}
If $a_1=a_2=\cdots=a_{n}=i$, then
$$\frac{\big(\tau(i)\big)^{n}}{2} \leq q_{n}(i,\ldots,i)
=\frac{\big(\tau(i)\big)^{n+1}-\big(\zeta(i)\big)^{n+1}}{\tau(i)-\zeta(i)}\leq 2\big(\tau(i)\big)^{n},$$
where $\tau(i)=\frac{i+\sqrt{i^{2}+4}}{2}$ and $\zeta(i)=\frac{i-\sqrt{i^{2}+4}}{2}.$
\end{lem}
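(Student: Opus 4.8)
My plan is to deduce all three assertions directly from the recursion $(\ref{equ21})$ together with the conventions $p_{-1}=1$, $q_{-1}=0$, $p_{0}=0$, $q_{0}=1$, treating them in turn. \emph{(1)} Since $a_{k}\ge 1$, the recursion gives $q_{m}\ge q_{m-1}$ for $m\ge 1$, hence $q_{m+1}=a_{m+1}q_{m}+q_{m-1}\ge q_{m}+q_{m-1}\ge 2q_{m-1}$; iterating from $q_{0}=1$ and $q_{1}=a_{1}\ge 1$ yields $q_{n}\ge 2^{\lfloor n/2\rfloor}\ge 2^{(n-1)/2}$. For $\prod_{k=1}^{n}a_{k}\le q_{n}$ I would induct on $n$, the step being $q_{n+1}=a_{n+1}q_{n}+q_{n-1}\ge a_{n+1}q_{n}$; for $q_{n}\le\prod_{k=1}^{n}(a_{k}+1)$ I would induct again, using $q_{n-1}\le q_{n}$ to get $q_{n+1}\le(a_{n+1}+1)q_{n}$.

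\emph{(2)} Here the engine is the splitting identity
$$q_{n+k}(a_{1},\ldots,a_{n+k})=q_{n}(a_{1},\ldots,a_{n})\,q_{k}(a_{n+1},\ldots,a_{n+k})+q_{n-1}(a_{1},\ldots,a_{n-1})\,p_{k}(a_{n+1},\ldots,a_{n+k}),$$
which I would obtain by writing the convergent matrix attached to $a_{1},\ldots,a_{m}$ as the product $\bigl(\begin{smallmatrix}a_{1}&1\\1&0\end{smallmatrix}\bigr)\cdots\bigl(\begin{smallmatrix}a_{m}&1\\1&0\end{smallmatrix}\bigr)=\bigl(\begin{smallmatrix}q_{m}&q_{m-1}\\ p_{m}&p_{m-1}\end{smallmatrix}\bigr)$ (a one-line induction from $(\ref{equ21})$), then taking $m=n+k$, splitting the product after the $n$-th factor, and comparing $(1,1)$ entries; an induction on $k$ works just as well. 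Positivity of all the continuants gives the lower bound $q_{n+k}\ge q_{n}\,q_{k}(a_{n+1},\ldots,a_{n+k})$ immediately, and combining $q_{n-1}\le q_{n}$ (from (1)) with $p_{k}(a_{n+1},\ldots,a_{n+k})\le q_{k}(a_{n+1},\ldots,a_{n+k})$ (the numerator of a continued fraction with first partial quotient $\ge 1$ never exceeds its denominator) gives $q_{n+k}\le 2\,q_{n}\,q_{k}(a_{n+1},\ldots,a_{n+k})$.

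\emph{(3)} When $a_{1}=\cdots=a_{n}=i$ the sequence $(q_{m})_{m\ge-1}$ solves the constant-coefficient recursion $q_{m+1}=iq_{m}+q_{m-1}$ with $q_{-1}=0$, $q_{0}=1$; its characteristic equation $t^{2}-it-1=0$ has roots $\tau(i)$ and $\zeta(i)$, and matching the two initial values gives the stated closed form $q_{n}=\big(\tau(i)^{n+1}-\zeta(i)^{n+1}\big)/\big(\tau(i)-\zeta(i)\big)$. From $\tau(i)\zeta(i)=-1$ one reads off $|\zeta(i)|=1/\tau(i)<1$, and from $2\tau(i)=i+\sqrt{i^{2}+4}$ one reads off $\tau(i)\le\tau(i)-\zeta(i)=\sqrt{i^{2}+4}\le 2\tau(i)$. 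The upper bound is then immediate, since $\tau(i)^{n+1}-\zeta(i)^{n+1}\le\tau(i)^{n+1}+|\zeta(i)|^{n+1}\le 2\tau(i)^{n+1}$ while $\tau(i)-\zeta(i)\ge\tau(i)$; for the lower bound, after clearing denominators and using $\tau(i)+\zeta(i)=i$, the claim reduces to the numerical inequality $i\ge 2\tau(i)^{-(2n+1)}$, which holds for every $i\ge 1$ and $n\ge 1$ because $\tau(i)\ge\frac{1+\sqrt{5}}{2}$. None of this is deep; the only steps calling for genuine care are the index bookkeeping in the splitting identity of (2) and this last numerical inequality, the rest being a routine unpacking of $(\ref{equ21})$.
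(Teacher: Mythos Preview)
Your proof is correct. Parts (1) and (3) follow essentially the same lines as the paper: recursion and induction for (1), and solving the linear recurrence (the paper phrases this as diagonalizing the matrix $\bigl(\begin{smallmatrix}i&1\\1&0\end{smallmatrix}\bigr)$, you as finding the roots of the characteristic polynomial, but these are the same computation) for the closed form in (3). Your packaging of the lower bound in (3) as the single sufficient inequality $i\ge 2\tau(i)^{-(2n+1)}$ is a bit cleaner than the paper's even/odd case split, though the content is identical.

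The one genuine methodological difference is in (2). The paper argues by induction on $k$: assuming the double inequality holds for $k\le m$, it expands $q_{n+m+1}$ via the recursion and applies the inductive hypothesis to each piece. You instead invoke the splitting identity $q_{n+k}=q_{n}\,q_{k}(a_{n+1},\ldots,a_{n+k})+q_{n-1}\,p_{k}(a_{n+1},\ldots,a_{n+k})$ obtained from the matrix factorization, and then bound in one stroke using $q_{n-1}\le q_{n}$ and $p_{k}\le q_{k}$. Your route is shorter and gives the result without induction, at the cost of first establishing the matrix identity (which the paper in any case proves later for part (3)); the paper's route is more self-contained but requires tracking two inequalities through the inductive step. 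Both are standard and equally valid.
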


\begin{proof}
For the convenience of readers, we give the proof.

{\rm{(1)}}
By the recursive relations (\ref{equ21}), we readily check that
         $$\prod\limits_{k=1}^{n}a_{k}\leq q_{n}\leq \prod\limits_{k=1}^{n}(a_{k}+1).$$
Since $a_n\ge 1$ for $n\ge 1,$ we have
         $$1=q_0\le q_1<q_2<\cdots q_{n-1}<q_n.$$
By induction $q_n\ge2^{\frac{n-1}{2}}$  for all $n\ge1$;  similarly $p_n\ge2^{\frac{n-1}{2}}$.
\smallskip

{\rm{(2)}}
Induction on $k$:
assuming that
\begin{equation*}
1\leq
\frac{q_{n+k}(a_{1},\ldots,a_{n},a_{n+1},\ldots,a_{n+k})}{q_{n}(a_{1},\ldots,a_{n})q_{k}(a_{n+1},\ldots,a_{n+k})}
\leq 2
\end{equation*}
holds for all $k\in\{1,\ldots,m\}$, we prove that the above inequality holds for $k=m+1.$
Indeed, this is the case because
\begin{align*}
 &\ \ \ \ q_{n+m+1}(a_{1},\ldots,a_{n},a_{n+1},\ldots,a_{n+m+1})\\
 &=a_{n+m+1}q_{n+m}(a_{1},\ldots,a_{n},a_{n+1},\ldots,a_{n+m})+q_{n+m-1}(a_{1},\ldots,a_{n},a_{n+1},\ldots,a_{n+m-1})\\
 &\ge a_{n+m+1}q_{n}(a_{1},\ldots,a_{n})q_{m}(a_{n+1},\ldots,a_{n+m})+q_{n}(a_{1},\ldots,a_{n})q_{m-1}(a_{n+1},\ldots,a_{n+m-1})\\
 &=q_{n}(a_{1},\ldots,a_{n})q_{m+1}(a_{n+1},\ldots,a_{n+m+1}),
 \end{align*}
 and
 \begin{align*}
 &\ \ \ \ q_{n+m+1}(a_{1},\ldots,a_{n},a_{n+1},\ldots,a_{n+m+1})\\
 &=a_{n+m+1}q_{n+m}(a_{1},\ldots,a_{n},a_{n+1},\ldots,a_{n+m})+q_{n+m-1}(a_{1},\ldots,a_{n},a_{n+1},\ldots,a_{n+m-1})\\
 &\le 2a_{n+m+1}q_{n}(a_{1},\ldots,a_{n})q_{m}(a_{n+1},\ldots,a_{n+m})+2q_{n}(a_{1},\ldots,a_{n})q_{m-1}(a_{n+1},\ldots,a_{n+m-1})\\
 &=2q_{n}(a_{1},\ldots,a_{n})q_{m+1}(a_{n+1},\ldots,a_{n+m+1}).
 \end{align*}

{\rm{(3)}}
By the recursive relations (\ref{equ21}), we   deduce that
\begin{align*}
 \left(
  \begin{array}{cc}
    p_{n+1} & p_{n} \\
    q_{n+1} & q_{n} \\
  \end{array}
\right)&=\left(
          \begin{array}{cc}
            p_{n} & p_{n-1} \\
            q_{n} & q_{n-1} \\
          \end{array}
        \right)\left(
                 \begin{array}{cc}
                   a_{n+1} & 1 \\
                   1 & 0 \\
                 \end{array}
               \right)\\ &
=\left(
\begin{array}{cc}
 p_0 & p_{-1} \\
  q_0 & q_{-1} \\
   \end{array}
\right)\left(
         \begin{array}{cc}
           a_{1} & 1 \\
           1 & 0 \\
         \end{array}
       \right)\cdots\left(
                 \begin{array}{cc}
                   a_{n+1} & 1 \\
                   1 & 0 \\
                 \end{array}
               \right)\\&=
               \left(
\begin{array}{cc}
 0 & 1 \\
  1 & 0 \\
   \end{array}
\right)\left(
         \begin{array}{cc}
           a_{1} & 1 \\
           1 & 0 \\
         \end{array}
       \right)\cdots\left(
                 \begin{array}{cc}
                   a_{n+1} & 1 \\
                   1 & 0 \\
                 \end{array}
               \right).
\end{align*}
Taking $a_1=\cdots=a_n=a_{n+1}=i$ yields that
$$\left(
  \begin{array}{cc}
    p_{n+1} & p_{n} \\
    q_{n+1} & q_{n} \\
  \end{array}
\right)=\left(
\begin{array}{cc}
 0 & 1 \\
  1 & 0 \\
   \end{array}
\right)\left(
         \begin{array}{cc}
           i & 1 \\
           1 & 0 \\
         \end{array}
       \right)\cdots\left(
                 \begin{array}{cc}
                   i & 1 \\
                   1 & 0 \\
                 \end{array}
               \right).$$
     The    symmetric matrix $A=\left(
\begin{array}{cc}
 i & 1 \\
  1 & 0 \\
   \end{array}
\right)$ is diagonalizable: $$P^{-1}AP=\left(
  \begin{array}{cc}
   \tau(i) & 0 \\
    0 & \zeta(i) \\
  \end{array}
\right)$$ with $P=\left(
  \begin{array}{cc}
   \tau(i) & \zeta(i) \\
    1 & 1 \\
  \end{array}
\right)$.

A direct calculation yields that
      $$q_n(i,\ldots,i)
       =\frac{\big(\tau(i)\big)^{n+1}-\big(\zeta(i)\big)^{n+1}}{\tau(i)-\zeta(i)}.$$

Also
    $$\frac{\big(\tau(i)\big)^{n+1}-\big(\zeta(i)\big)^{n+1}}{\tau(i)-\zeta(i)}\le\frac{2\big(\tau(i)\big)^{n+1}}{\tau(i)}=2\big(\tau(i)\big)^{n},$$
   and, if $n$ is even,
       $$\frac{\big(\tau(i)\big)^{n+1}-\big(\zeta(i)\big)^{n+1}}{\tau(i)-\zeta(i)}\ge\frac{\big(\tau(i)\big)^{n+1}}{2\tau(i)}=\frac{\big(\tau(i)\big)^{n}}{2};$$
if $n$ is odd,   (since  $\zeta(i)\cdot\tau(i)=-1$)
\begin{align*}
  \frac{\big(\tau(i)\big)^{n+1}-\big(\zeta(i)\big)^{n+1}}{\tau(i)-\zeta(i)}
  =\frac{\big(\tau(i)\big)^{2(n+1)}-1}{\big(\tau(i)\big)^{n+2}+\big(\tau(i)\big)^{n}}
  \ge\frac{\big(\tau(i)\big)^{n}}{2}.
\end{align*}
This completes the proof.
\end{proof}

For  $n\geq 1$ and $(a_{1},\ldots,a_{n})\in \mathbb{N}^{n}$, we write
       $$I_{n}(a_{1},\ldots,a_{n})=\{x\in[0,1)\colon a_{k}(x)=a_{k}, 1\leq k\leq n\},$$
and call it a basic interval of order $n$.
The basic interval of order $n$ which contains $x$ will be denoted by  $I_{n}(x)$, i.e.,
       $I_{n}(x)=I_{n}(a_{1}(x),\ldots,a_{n}(x))$.

\begin{lem} [\cite{K}]\label{lem22}
For $n\geq 1$ and $(a_{1},\ldots,a_{n})\in \mathbb{N}^{n},$ we have
\begin{equation}\label{e22}
\frac{1}{2q_{n}^{2}}\leq|I_{n}(a_{1},\ldots,a_{n})|=\frac{1}{q_{n}(q_{n}+q_{n+1})}\leq\frac{1}{q_{n}^{2}}.
\end{equation}
Here and hereafter $|\cdot|$ denotes the length of an interval.
\end{lem}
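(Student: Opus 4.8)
The plan is to identify $I_n(a_1,\ldots,a_n)$ with an explicit interval and compute its length from the convergents. The starting point is the observation that, for $x$ with $a_k(x)=a_k$ $(1\le k\le n)$, formula (\ref{e1}) together with the recursion (\ref{equ21}) gives
$$x=[a_1,\ldots,a_n+T^n(x)]=\frac{p_n+T^n(x)\,p_{n-1}}{q_n+T^n(x)\,q_{n-1}};$$
conversely, appending an arbitrary continued fraction expansion after $a_1,\ldots,a_n$ shows that $T^n(x)$ realizes every value in $[0,1)$ as $x$ ranges over $I_n(a_1,\ldots,a_n)$. Hence $I_n(a_1,\ldots,a_n)$ is precisely the image of $[0,1)$ under the M\"obius map $t\mapsto (p_n+t\,p_{n-1})/(q_n+t\,q_{n-1})$.

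Since this map is monotone, $I_n(a_1,\ldots,a_n)$ is an interval whose endpoints are the images of $t=0$ and $t=1$, namely $p_n/q_n$ and $(p_n+p_{n-1})/(q_n+q_{n-1})$. Subtracting and clearing denominators, the length equals $|p_nq_{n-1}-p_{n-1}q_n|\big/\big(q_n(q_n+q_{n-1})\big)$, and the numerator is $1$ by the determinant identity $p_nq_{n-1}-p_{n-1}q_n=(-1)^{n-1}$, which follows by induction from (\ref{equ21}) (or by taking determinants in the matrix factorization already used in the proof of Lemma \ref{lem21}(3)). This gives $|I_n(a_1,\ldots,a_n)|=1\big/\big(q_n(q_n+q_{n-1})\big)$.

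The two-sided bound is then immediate: $a_k\ge1$ forces the sequence $q_0,q_1,q_2,\ldots$ to be (non-strictly) increasing, so $q_{n-1}\le q_n$ and therefore $q_n\le q_n+q_{n-1}\le 2q_n$; substituting into the length formula yields $\tfrac{1}{2q_n^{2}}\le|I_n(a_1,\ldots,a_n)|\le\tfrac{1}{q_n^{2}}$. I expect no genuine obstacle here; the only point worth stating carefully is the surjectivity of $x\mapsto T^n(x)$ onto $[0,1)$ on the cylinder, since it is exactly this that pins down the right endpoint $(p_n+p_{n-1})/(q_n+q_{n-1})$ and hence the precise value of $|I_n(a_1,\ldots,a_n)|$ rather than merely an estimate.
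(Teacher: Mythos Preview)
Your argument is correct and is the standard derivation. Note, however, that the paper does not actually prove this lemma: it is quoted from Khintchine's book \cite{K} with no proof supplied, so there is nothing to compare your approach against.

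One point worth recording: your computation yields $|I_n(a_1,\ldots,a_n)|=1/\big(q_n(q_n+q_{n-1})\big)$, whereas the displayed formula in the paper reads $1/\big(q_n(q_n+q_{n+1})\big)$. Your version is the correct one. Indeed, $q_{n+1}$ is not even well-defined from $(a_1,\ldots,a_n)$ alone, and with $q_{n+1}>q_n$ the lower bound $\tfrac{1}{2q_n^2}$ would fail; the ``$q_{n+1}$'' in the paper is evidently a typographical slip for $q_{n-1}$, and your bound $q_{n-1}\le q_n$ (hence $q_n\le q_n+q_{n-1}\le 2q_n$) is exactly what gives the two-sided estimate.
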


The next lemma describes the distribution of basic intervals $I_{n+1}$ of order $n+1$ inside an $n$-th  basic interval $I_{n}.$

\begin{lem}[\cite{K}]\label{lem23}
Let $I_{n}(a_{1},\ldots,a_{n})$ be a basic interval of order $n,$ which is partitioned into sub-intervals $I_{n+1}(a_{1},\ldots,a_{n},a_{n+1})$ with $a_{n+1}\in \mathbb{N}.$
When $n$ is odd, these sub-intervals are positioned from left to right, as $a_{n+1}$ increases;
when $n$ is even, they are positioned from right to left.
\end{lem}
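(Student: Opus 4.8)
The plan is to realise $I_{n}(a_{1},\ldots,a_{n})$ as the image of a single interval under an explicit Möbius map, and then to read off the ordering of the sub-cylinders from the monotonicity of that map. Every $x\in I_{n}(a_{1},\ldots,a_{n})$ can be written as $x=[a_{1},\ldots,a_{n-1},a_{n}+s]$ with $s=T^{n}(x)\in[0,1)$, and iterating the recursion $(\ref{equ21})$ shows that
$$x=\varphi_{n}(s):=\frac{p_{n}+s\,p_{n-1}}{q_{n}+s\,q_{n-1}},\qquad \frac{p_{n}}{q_{n}}=[a_{1},\ldots,a_{n}],\quad \frac{p_{n-1}}{q_{n-1}}=[a_{1},\ldots,a_{n-1}].$$
Conversely, for every $s\in[0,1)$ the point $\varphi_{n}(s)$ belongs to $I_{n}(a_{1},\ldots,a_{n})$, because its first $n-1$ partial quotients are $a_{1},\ldots,a_{n-1}$ by construction and $1/T^{n-1}(\varphi_{n}(s))=a_{n}+s\in[a_{n},a_{n}+1)$. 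Hence $\varphi_{n}$ maps $[0,1)$ onto $I_{n}(a_{1},\ldots,a_{n})$ and $\overline{I_{n}(a_{1},\ldots,a_{n})}=\varphi_{n}([0,1])$, with endpoints $\varphi_{n}(0)=p_{n}/q_{n}$ and $\varphi_{n}(1)=(p_{n}+p_{n-1})/(q_{n}+q_{n-1})$.

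Next I would differentiate $\varphi_{n}$. Using the identity $p_{n}q_{n-1}-p_{n-1}q_{n}=(-1)^{n-1}$, which follows at once from $(\ref{equ21})$, one gets
$$\varphi_{n}'(s)=\frac{p_{n-1}q_{n}-p_{n}q_{n-1}}{(q_{n}+s\,q_{n-1})^{2}}=\frac{(-1)^{n}}{(q_{n}+s\,q_{n-1})^{2}}.$$
Therefore $\varphi_{n}$ is strictly increasing on $[0,1]$ when $n$ is even and strictly decreasing when $n$ is odd; equivalently, on $I_{n}(a_{1},\ldots,a_{n})$ the coordinate $x$ is an increasing (resp.\ decreasing) function of $s=T^{n}(x)$ according as $n$ is even (resp.\ odd).

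Then I would locate the sub-cylinders in the variable $s$. By definition $x\in I_{n+1}(a_{1},\ldots,a_{n},a_{n+1})$ if and only if $a_{n+1}(x)=a_{n+1}$, i.e.\ if and only if $\lfloor 1/s\rfloor=a_{n+1}$, i.e.\ if and only if $s\in J_{a_{n+1}}:=\big(\tfrac{1}{a_{n+1}+1},\tfrac{1}{a_{n+1}}\big]$. The intervals $\{J_{k}\}_{k\ge1}$ are pairwise disjoint, their union is $(0,1]$, each $J_{k}$ moves monotonically toward $0$ as $k$ increases, and $J_{k+1}$ abuts $J_{k}$ on its left. Transporting this picture by the monotone bijection $\varphi_{n}$ then gives the claim: when $n$ is even a larger $a_{n+1}$ corresponds to a smaller range of $s$, hence to a smaller range of $x$, so $I_{n+1}(\ldots,1),I_{n+1}(\ldots,2),\ldots$ are laid out from right to left with consecutive ones sharing an endpoint; when $n$ is odd the orientation reverses and they run from left to right.

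I do not expect a genuine obstacle here: the argument is elementary once the parametrisation is in place, and the only delicate points are the bookkeeping of orientation — deciding which endpoint of $\overline{I_{n}(a_{1},\ldots,a_{n})}$ corresponds to $s=0$ and which to $s=1$, and carrying the sign $(-1)^{n}$ correctly through $\varphi_{n}'$ — together with the harmless half-open versus closed distinctions, which have no bearing on the ordering statement.
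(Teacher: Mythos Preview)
Your argument is correct and is in fact the standard proof: parametrise the cylinder by the M\"obius map $\varphi_{n}(s)=(p_{n}+sp_{n-1})/(q_{n}+sq_{n-1})$, read off the sign of $\varphi_{n}'$ from the determinant identity $p_{n}q_{n-1}-p_{n-1}q_{n}=(-1)^{n-1}$, and transport the monotone family $J_{k}=(1/(k{+}1),1/k]$ through $\varphi_{n}$. The orientation bookkeeping you flag is handled correctly.

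There is nothing in the paper to compare against: Lemma~\ref{lem23} is quoted from Khintchine~\cite{K} as background material and no proof is given in the text. Your write-up supplies exactly the missing justification.
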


The following lemma displays  the relationship between the ball $B(x,|I_{n}(x)|)$ and the basic interval $I_{n}(x)$.

\begin{lem} [\cite{BW}]\label{lem24}
Let $x=[a_{1},a_{2},\ldots].$ We have$\colon$

{\rm{(1)}} if $a_{n}\neq 1,$ then $B(x,|I_{n}(x)|)\subset\bigcup\limits_{j=-1}^{3}I_{n}(a_{1},\ldots,a_{n}+j);$

{\rm{(2)}} if $a_{n}=1$ and $a_{n-1}\neq 1,$ then $B(x,|I_{n}(x)|)\subset\bigcup\limits_{j=-1}^{3}I_{n-1}(a_{1},\ldots,a_{n-1}+j);$

{\rm{(3)}} if $a_{n}=1$ and $a_{n-1}=1,$ then $B(x,|I_{n}(x)|)\subset I_{n-2}(a_{1},\ldots,a_{n-2}).$
\end{lem}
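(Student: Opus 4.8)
The plan is to localize the ball $B(x,|I_n(x)|)$ by a direct analysis of how the basic intervals of a fixed order are nested inside their parent, with the ordering supplied by Lemma \ref{lem23} and the sizes by Lemma \ref{lem22}. Write $I_n(x)=[\ell_n,r_n]$; since $x\in I_n(x)$ and the ball has radius $|I_n(x)|$, we have
\[
B(x,|I_n(x)|)\subseteq O^-\cup I_n(x)\cup O^+,\qquad O^-=(\ell_n-|I_n(x)|,\ell_n),\quad O^+=(r_n,r_n+|I_n(x)|),
\]
and each ``overhang'' $O^{\pm}$ has length at most $|I_n(x)|$. The whole proof amounts to deciding, case by case, which neighbouring basic intervals $O^-$ and $O^+$ can meet, and checking that these are wide enough to swallow them.

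First I would record two elementary observations. Fix a prefix $(a_1,\dots,a_{n-1})$ and set $q=q_{n-1}(a_1,\dots,a_{n-1})$, $\widetilde{q}=q_{n-2}(a_1,\dots,a_{n-2})$, so that $q_n(a_1,\dots,a_{n-1},m)=mq+\widetilde{q}$ and, by Lemma \ref{lem22}, $|I_n(a_1,\dots,a_{n-1},m)|=\bigl((mq+\widetilde{q})((m+1)q+\widetilde{q})\bigr)^{-1}$. Hence: (i) $m\mapsto|I_n(a_1,\dots,a_{n-1},m)|$ is strictly decreasing, so in particular $|I_n(\dots,m-1)|>|I_n(\dots,m)|$; and (ii) by telescoping, $\sum_{j=1}^{3}|I_n(\dots,m+j)|=3\bigl(((m+1)q+\widetilde{q})((m+4)q+\widetilde{q})\bigr)^{-1}$, whence $\sum_{j=1}^{3}|I_n(\dots,m+j)|\ge|I_n(\dots,m)|$ exactly when $mq+\widetilde{q}\ge 2q$, which holds whenever $m\ge 2$. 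I will also use that a basic interval $I_k(a_1,\dots,a_k)$ has endpoints $p_k/q_k$ and $(p_k+p_{k-1})/(q_k+q_{k-1})$ — so that $I_k(a_1,\dots,a_k)$ and $I_k(a_1,\dots,a_k+1)$ share the endpoint $(p_k+p_{k-1})/(q_k+q_{k-1})$ — and that the basic intervals $I_n(a_1,\dots,a_{n-1},m)$ accumulate, as $m\to\infty$, at the endpoint $p_{n-1}/q_{n-1}$ of $I_{n-1}(a_1,\dots,a_{n-1})$.

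Now the case analysis. In case (1), $a_n\ge2$, so by Lemma \ref{lem23} the basic interval $I_n(x)$ has $I_n(\dots,a_n-1)$ immediately on one side and $I_n(\dots,a_n+1),I_n(\dots,a_n+2),\dots$ in order on the other, all inside $I_{n-1}(x)$; by (i) the overhang on the first side, of length $\le|I_n(x)|<|I_n(\dots,a_n-1)|$, lies in $I_n(\dots,a_n-1)$, and by (ii) with $m=a_n\ge2$ the overhang on the other side lies in $I_n(\dots,a_n+1)\cup I_n(\dots,a_n+2)\cup I_n(\dots,a_n+3)$, so $B(x,|I_n(x)|)\subseteq\bigcup_{j=-1}^{3}I_n(a_1,\dots,a_n+j)$. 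In case (2), $a_n=1$, so $I_n(x)=I_n(\dots,a_{n-1},1)$ abuts the endpoint of $I_{n-1}(x)$ it shares with $I_{n-1}(\dots,a_{n-1}+1)$, while $I_n(\dots,a_{n-1},2),I_n(\dots,a_{n-1},3),\dots$ lie on the other side inside $I_{n-1}(x)$; the one-line comparison $2|I_n(x)|\le|I_{n-1}(x)|$ shows the overhang on the latter side never leaves $I_{n-1}(x)$, while the overhang on the former side can only protrude from $I_{n-1}(x)$ into $I_{n-1}(\dots,a_{n-1}+1),I_{n-1}(\dots,a_{n-1}+2),\dots$ and, having length $\le|I_n(x)|\le|I_{n-1}(x)|$, is covered by (ii) applied one level down (with $m=a_{n-1}\ge2$) by $I_{n-1}(\dots,a_{n-1}+1)\cup I_{n-1}(\dots,a_{n-1}+2)\cup I_{n-1}(\dots,a_{n-1}+3)$, so $B(x,|I_n(x)|)\subseteq\bigcup_{j=-1}^{3}I_{n-1}(a_1,\dots,a_{n-1}+j)$. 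In case (3), $a_n=a_{n-1}=1$, so $I_{n-1}(x)=I_{n-1}(\dots,a_{n-2},1)$ sits at the extreme end of $I_{n-2}(x)$ while $I_n(x)=I_n(\dots,a_{n-2},1,1)$ sits at the \emph{opposite} end of $I_{n-1}(x)$; thus $I_n(x)$ is buried in the interior of $I_{n-2}(x)$, and a short computation with the $q$'s shows that the distance from $x$ to each endpoint of $I_{n-2}(x)$ is at least $|I_n(x)|$, so $B(x,|I_n(x)|)\subseteq I_{n-2}(a_1,\dots,a_{n-2})$.

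The arithmetic of the $q_n$'s is routine; what needs care is the orientation bookkeeping in cases (2) and (3) — using Lemma \ref{lem23} together with the $p_k/q_k$ versus $(p_k+p_{k-1})/(q_k+q_{k-1})$ description of endpoints to pin down exactly which endpoint of the parent (resp.\ grandparent) basic interval $I_n(x)$ touches and into which neighbouring basic interval each overhang spills. Case (3) is the subtlest point: one must notice that having \emph{two} leading partial quotients equal to $1$ pushes $I_n(x)$ away from \emph{both} endpoints of $I_{n-2}(x)$, so that, unlike in case (2), the ball cannot leave the grandparent $I_{n-2}(x)$ at all, the entire case reducing to the inequalities $2|I_n(x)|\le|I_{n-1}(x)|$ and $|I_n(x)|\le\mathrm{dist}(x,\partial I_{n-2}(x))$.
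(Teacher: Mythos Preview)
The paper does not prove this lemma; it is quoted from \cite{BW} without argument. Your direct verification is correct and self-contained: the two arithmetic facts (monotonicity of $|I_n(\ldots,m)|$ in $m$, and the telescoping inequality $\sum_{j=1}^{3}|I_n(\ldots,m+j)|\ge|I_n(\ldots,m)|$ for $m\ge2$) together with the orientation information from Lemma~\ref{lem23} are exactly what is needed, and your case analysis handles each of the three situations properly. In case~(3) your endpoint estimates check out: with $a_{n-1}=a_n=1$ one has $q_{n-1}=q_{n-2}+q_{n-3}$, so $|I_{n-2}(x)|-|I_{n-1}(x)|=\frac{1}{q_{n-2}(q_{n-1}+q_{n-2})}\ge|I_n(x)|$ and $|I_{n-1}(x)|-|I_n(x)|\ge|I_n(x)|$, giving $\mathrm{dist}(x,\partial I_{n-2}(x))\ge|I_n(x)|$ as you claim. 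One cosmetic remark: the basic intervals $I_n$ are half-open, so the unions on the right-hand side omit finitely many rational endpoints that the open ball $B(x,|I_n(x)|)$ may contain; this is the usual harmless abuse in the continued-fraction literature and does not affect any application of the lemma.
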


The following two properties, namely, H\"{o}lder property and the mass distribution principle, are often used to estimate the Hausdorff dimension of a fractal set.

\begin{lem} [\cite{F}] \label{lem212}
If $f\colon X \to Y$ is an $\alpha$-H\"{o}lder mapping between metric spaces,
that is, there exists $c>0$ such that for all $x_{1},x_{2}\in X$,
$$d(f(x_{1}),f(x_{2}))\leq cd(x_{1},x_{2})^{\alpha}.$$
Then $\dim_{H}f(X)\leq \frac{1}{\alpha}\dim_{H}X.$
\end{lem}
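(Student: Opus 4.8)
The plan is to deduce this directly from the definition of Hausdorff measure via economical covers, exploiting that an $\alpha$-H\"older map distorts diameters by at most a power. Write $s=\dim_{H}X$ and assume $s<\infty$ (otherwise there is nothing to prove). First I would fix an arbitrary real $t>s$: since $\mathcal{H}^{t}(X)=0$, for every pair $\delta,\epsilon>0$ one can choose a countable cover $\{U_{i}\}_{i}$ of $X$ with $\operatorname{diam}(U_{i})\le\delta$ for all $i$ and $\sum_{i}\operatorname{diam}(U_{i})^{t}<\epsilon$, and after intersecting each $U_{i}$ with $X$ we may assume $U_{i}\subseteq X$.

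Next I would push the cover forward under $f$: the family $\{f(U_{i})\}_{i}$ covers $f(X)$, and the H\"older inequality gives $\operatorname{diam}(f(U_{i}))\le c\,\operatorname{diam}(U_{i})^{\alpha}\le c\,\delta^{\alpha}$, so this is a cover of $f(X)$ at scale $c\,\delta^{\alpha}$. Raising the H\"older bound to the power $t/\alpha$ termwise yields
$$\sum_{i}\operatorname{diam}\big(f(U_{i})\big)^{t/\alpha}\le\sum_{i}\big(c\,\operatorname{diam}(U_{i})^{\alpha}\big)^{t/\alpha}=c^{t/\alpha}\sum_{i}\operatorname{diam}(U_{i})^{t}<c^{t/\alpha}\,\epsilon,$$
whence the pre-measure satisfies $\mathcal{H}^{t/\alpha}_{c\delta^{\alpha}}(f(X))\le c^{t/\alpha}\epsilon$. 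Letting $\epsilon\to0$ at fixed $\delta$ gives $\mathcal{H}^{t/\alpha}_{c\delta^{\alpha}}(f(X))=0$, and then letting $\delta\to0$ forces $\mathcal{H}^{t/\alpha}(f(X))=0$, so $\dim_{H}f(X)\le t/\alpha$. Since $t>s=\dim_{H}X$ was arbitrary, taking the infimum over all such $t$ gives $\dim_{H}f(X)\le\frac{1}{\alpha}\dim_{H}X$.

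There is no genuine obstacle here — this is a textbook fact — but the one point deserving attention is the bookkeeping with the two scale parameters: one must send $\epsilon\to0$ at fixed $\delta$ first and only afterwards let $\delta\to0$, so that the pushed-forward family remains an admissible cover of $f(X)$ at arbitrarily fine scales. Everything else reduces to the elementary identity $\big(c\,r^{\alpha}\big)^{t/\alpha}=c^{t/\alpha}r^{t}$ applied inside the sum.
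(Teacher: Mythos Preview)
Your argument is correct and is exactly the standard proof of this classical fact. Note, however, that the paper does not supply its own proof of this lemma: it is quoted with a citation to Falconer's textbook \cite{F} and used as a black box, so there is nothing in the paper to compare against beyond observing that your sketch is precisely the textbook derivation.
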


\begin{lem} [\cite{F}] \label{lem213}
Let $E\subseteq [0,1]$ be a Borel set and $\mu$
be a measure with $\mu(E)> 0.$ If for every $x \in E$,
$$\liminf_{r \to 0}\frac{\log \mu(B(x,r))}{\log r} \geq s,$$
then $ \dim_{H}E\geq s.$
\end{lem}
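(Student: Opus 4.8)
The plan is to derive the dimension bound from the standard mass distribution argument. It suffices to prove that $\mathcal{H}^{t}(E)>0$ for every real $t<s$, where $\mathcal{H}^{t}$ denotes the $t$-dimensional Hausdorff measure and $\mathcal{H}^{t}_{\delta}$ its $\delta$-approximant: indeed this yields $\dim_{H}E\ge t$ for all $t<s$, and letting $t\uparrow s$ gives $\dim_{H}E\ge s$.

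So fix $t<s$. First I would exploit the hypothesis pointwise. For each $x\in E$, since $\liminf_{r\to0}\frac{\log\mu(B(x,r))}{\log r}\ge s>t$ and $\log r<0$ for small $r$, there is a radius $\rho(x)>0$ such that $\mu(B(x,r))\le r^{t}$ for all $0<r\le\rho(x)$. As $\rho(x)$ has no uniform lower bound over $E$, I would stratify: set $E_{k}=\{x\in E:\rho(x)\ge 1/k\}$, so that $E=\bigcup_{k\ge1}E_{k}$. Since the outer measure $\mu^{*}$ is countably subadditive and $\mu^{*}(E)=\mu(E)>0$, some $E_{k}$ must satisfy $\eta:=\mu^{*}(E_{k})>0$; fix such a $k$. (Working with $\mu^{*}$ sidesteps the possible non-measurability of the level sets $E_{k}$.)

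Next comes the covering estimate. Let $\{U_{i}\}$ be an arbitrary countable cover of $E_{k}$ with every $|U_{i}|<\tfrac{1}{2k}$; we may assume each $U_{i}$ meets $E_{k}$ and choose $x_{i}\in U_{i}\cap E_{k}$. Then $U_{i}\subseteq B(x_{i},2|U_{i}|)$, and $2|U_{i}|<\tfrac1k\le\rho(x_{i})$, so $\mu(U_{i})\le\mu\big(B(x_{i},2|U_{i}|)\big)\le(2|U_{i}|)^{t}=2^{t}|U_{i}|^{t}$. Summing over $i$ and using subadditivity of $\mu^{*}$,
$$\sum_{i}|U_{i}|^{t}\ \ge\ 2^{-t}\sum_{i}\mu(U_{i})\ \ge\ 2^{-t}\,\mu^{*}\!\Big(\bigcup_{i}U_{i}\Big)\ \ge\ 2^{-t}\eta.$$
Taking the infimum over all such covers gives $\mathcal{H}^{t}_{\delta}(E_{k})\ge 2^{-t}\eta$ for every $\delta\in(0,\tfrac{1}{2k})$, hence $\mathcal{H}^{t}(E_{k})\ge 2^{-t}\eta>0$, and therefore $\dim_{H}E\ge\dim_{H}E_{k}\ge t$. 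Letting $t\uparrow s$ finishes the proof.

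There is no genuine obstacle here, only bookkeeping: one must phrase the mass estimate through the outer measure because the level sets $E_{k}$ need not be Borel, and one must keep track of the constants arising when passing from diameters of the covering sets $U_{i}$ to radii of balls and from closed to open balls — whence the threshold $\tfrac{1}{2k}$ and the factor $2^{t}$. No regularity of $\mu$ beyond countable subadditivity is needed.
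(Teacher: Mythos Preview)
Your proof is correct and follows the standard mass distribution argument. Note that the paper does not actually prove this lemma: it is stated with a citation to Falconer's textbook \cite{F} and used as a black box, so there is no ``paper's own proof'' to compare against. Your argument is essentially the one found in Falconer (Proposition~4.9 / the Mass Distribution Principle), with the careful stratification $E_k=\{x:\rho(x)\ge 1/k\}$ and the use of outer measure to handle possible non-measurability of the $E_k$; these refinements are not needed in the paper's applications (where an explicit uniform H\"older bound $\mu(B(x,r))\le C r^{s-\varepsilon}$ is obtained directly, cf.\ Lemma~\ref{lem43}), but they make the statement hold at the level of generality in which it is formulated.
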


We conclude this subsection by quoting a dimensional result related to continued fractions, which will be used in the proof of Theorem \ref{TZ2}.

Let $\mathbf{K}=\{k_{n}\}_{n= 1}^{\infty}$ be a subsequence of $\mathbb{N}$ which is not cofinite.
Let $x=[a_{1},a_{2},\ldots]$ be an irrational number in $[0,1)$.
Eliminating all the terms  $a_{k_n}$ from the sequence $a_1,a_2,\ldots$,
we obtain an infinite subsequence $c_1,c_2,\ldots$, and put $\phi_{\mathbf{K}}(x)=y$ with $y=[c_1,c_2,\ldots]$.
In this way, we define a mapping $\phi_{\mathbf{K}}\colon [0,1)\cap \mathbb{Q}^{c}\to [0,1)\cap \mathbb{Q}^{c}$.

Let $\{M_{n}\}_{n\geq 1}$ be a sequence with $M_{n}\in\mathbb{N}$, $n\geq 1$. Set
$$S(\{M_{n}\})=\big\{x\in [0,1)\cap \mathbb{Q}^{c}\colon 1\leq a_{n}(x)\leq M_{n}~\text{for all}~n\geq 1\big\}.$$

\begin{lem}[\cite{CC}]\label{lem214}
Suppose that $\{M_{n}\}_{n=1}^{\infty}$ is a bounded sequence.
If the sequence $\mathbf{K}=\{k_{n}\}_{n=1}^{\infty}$ is of density zero in $\mathbb{N}$, then
$$\dim_{H}S(\{M_{n}\})=\dim_{H}\phi_{\mathbf{K}}S(\{M_{n}\}).$$
\end{lem}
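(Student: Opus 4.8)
The plan is to recast the identity so that both sides are Hausdorff dimensions of continued fraction Cantor sets of the same kind, and then compare them directly. Write $\sigma\colon\N\to\N$ for the increasing enumeration of $\N\setminus\mathbf{K}$, put $M=\sup_{n}M_{n}<\infty$, and for $n\ge1$ set $N(n)=\#([1,n]\setminus\mathbf{K})$. A point $y=[c_{1},c_{2},\ldots]$ lies in $\phi_{\mathbf{K}}S(\{M_{n}\})$ iff some admissible $x=[a_{1},a_{2},\ldots]\in S(\{M_{n}\})$ restricts, after deleting its coordinates with index in $\mathbf{K}$, to $(c_{j})_{j}$; since the deleted coordinates may always be chosen equal to $1$, this happens exactly when $c_{j}\le M_{\sigma(j)}$ for every $j$. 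Hence
$$\phi_{\mathbf{K}}S(\{M_{n}\})=S(\{M_{\sigma(n)}\}),$$
again a set of the type considered in the lemma, with bounded defining sequence. The density-zero hypothesis is equivalent to $N(n)/n\to1$, i.e. to $p(N):=\sigma(N)-N=o(N)$. The elementary mechanism I would exploit is: by Lemma \ref{lem21}(2), inserting one partial quotient of value $\le M$ into a finite word multiplies the associated denominator $q_{n}$ by a factor in $[1/2,4M]$, so inserting $p$ of them multiplies it by a factor in $[2^{-p},(4M)^{p}]$. Combined with Lemma \ref{lem22}, this shows that a basic interval $I_{\sigma(N)}$ of $S(\{M_{n}\})$ and the basic interval $I_{N}$ of $S(\{M_{\sigma(n)}\})$ obtained from it by deleting the $\mathbf{K}$-coordinates have comparable lengths, up to a factor $e^{O(p(N))}=e^{o(N)}$.

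\textbf{Lower bound.} First I would prove $\dim_{H}S(\{M_{n}\})\ge\dim_{H}\phi_{\mathbf{K}}S(\{M_{n}\})=:t$. Fix $t'<t$; by Frostman's lemma there is a Borel probability measure $\mu$ on $S(\{M_{\sigma(n)}\})$ with $\mu(B(y,\rho))\le C\rho^{t'}$ for all $y,\rho$. Consider $T=\{x\in S(\{M_{n}\})\colon a_{k}(x)=1\text{ for all }k\in\mathbf{K}\}\subseteq S(\{M_{n}\})$. The map $\phi_{\mathbf{K}}|_{T}\colon T\to S(\{M_{\sigma(n)}\})$ is a homeomorphism (it and its inverse, which reinserts $1$'s, are continuous since they act coordinatewise), so $\mu$ pulls back to a Borel measure $\nu$ on $T$. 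For $x\in T$ and small $r$, pick $n$ with $|I_{n+1}(x)|\le r<|I_{n}(x)|$; by Lemma \ref{lem24}, $B(x,r)\cap T$ is covered by a bounded number of basic intervals $I_{m}$ of $S(\{M_{n}\})$ with $m\ge n-2$, and for each of them $\nu(I_{m})=\mu\big(\phi_{\mathbf{K}}(I_{m}\cap T)\big)\le C|I_{N(m)}|^{t'}$ for the corresponding basic interval $I_{N(m)}$ of $S(\{M_{\sigma(n)}\})$. Using the length comparison above together with $|I_{m}|\le|I_{n-2}(x)|\le O(r)$, one gets $\nu(B(x,r))\le r^{t'-o(1)}$ as $r\to0$, so $\liminf_{r\to0}\log\nu(B(x,r))/\log r\ge t'$. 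Lemma \ref{lem213} then gives $\dim_{H}S(\{M_{n}\})\ge\dim_{H}T\ge t'$, and letting $t'\uparrow t$ finishes this direction.

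\textbf{Upper bound.} For the reverse inequality I would argue by covering. Fix $t<t'<s$. Since $\mathcal{H}^{t'}(S(\{M_{\sigma(n)}\}))=0$ and, by the bounded geometry of these Cantor sets (cf. Lemma \ref{lem24}), the Hausdorff measure can be estimated using covers by basic intervals, for every $\eta>0$ there is a cover $\{J_{i}=I_{N_{i}}\}$ of $S(\{M_{\sigma(n)}\})$ by basic intervals with $\sum_{i}|J_{i}|^{t'}<\eta^{t'}$; in particular every $|J_{i}|<\eta$. Pulling back, $\phi_{\mathbf{K}}^{-1}(J_{i})\cap S(\{M_{n}\})$ is a union of at most $M^{\,\sigma(N_{i})-N_{i}}$ basic intervals of $S(\{M_{n}\})$ of order $\sigma(N_{i})$, each of length at most $e^{O(\sigma(N_{i})-N_{i})}|J_{i}|$. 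Hence the $s$-sum of this cover of $S(\{M_{n}\})$ is at most $\sum_{i}e^{O(\sigma(N_{i})-N_{i})}|J_{i}|^{s}$. Since $|J_{i}|\le q_{N_{i}}^{-2}\le 2^{-(N_{i}-1)}$ by Lemmas \ref{lem22} and \ref{lem21}(1), we have $|J_{i}|^{-(s-t')}\ge 2^{(N_{i}-1)(s-t')}=e^{\Theta(N_{i})}$, which dominates $e^{O(\sigma(N_{i})-N_{i})}=e^{o(N_{i})}$ once $N_{i}$ is large; choosing $\eta$ small enough that $|J_{i}|<\eta$ forces $N_{i}$ into this range, we obtain $e^{O(\sigma(N_{i})-N_{i})}|J_{i}|^{s}\le|J_{i}|^{t'}$ for every $i$. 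Thus the $s$-sum is $<\eta^{t'}$, and the cover has mesh tending to $0$ as $\eta\to0$, so $\mathcal{H}^{s}(S(\{M_{n}\}))=0$. As $s>t$ is arbitrary, $\dim_{H}S(\{M_{n}\})\le t$, and together with the lower bound this proves the lemma.

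\textbf{Main obstacle.} The conceptual crux, and the step needing the most care, is the bookkeeping of the competition between \emph{positions} and \emph{denominators}: deleting a density-zero set of coordinates costs only a subexponential multiplicative distortion $e^{O(p(N))}$ in length and a subexponential multiplicity $M^{p(N)}$ in covering number, whereas the quantities $|J_{i}|^{-1}$ against which these must be absorbed grow exponentially in $N$; getting the order of quantifiers right (fixing the threshold $\eta$ before selecting the cover) is precisely what makes the upper bound go through. The ancillary ingredients — that $\phi_{\mathbf{K}}|_{T}$ is a homeomorphism, that Hausdorff measure of bounded-type continued fraction Cantor sets is computed by basic-interval covers, and the one-term insertion estimate extracted from Lemma \ref{lem21}(2) — are routine.
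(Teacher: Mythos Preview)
The paper does not prove this lemma: it is quoted from \cite{CC} with no argument supplied, so there is no in-paper proof to compare your proposal against.

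That said, your approach is sound. The identification $\phi_{\mathbf{K}}S(\{M_{n}\})=S(\{M_{\sigma(n)}\})$ is correct, and the driving estimate---that inserting or deleting $p(N)=o(N)$ partial quotients bounded by $M$ perturbs $q_{N}$ by a multiplicative factor in $[2^{-p(N)},(4M)^{p(N)}]$, hence basic-interval lengths by $|I_{N}|^{\pm o(1)}$---follows from Lemma~\ref{lem21}(2) as you indicate. Both the Frostman/mass-distribution lower bound on the section $T$ and the covering upper bound via pullback go through; the density-zero hypothesis is used exactly where you say, to absorb the subexponential distortion and multiplicity factors into the exponential decay of $|I_{N}|$. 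Your strategy is in fact the same mechanism the paper itself deploys in a special case at the end of Section~\ref{S_4} (the treatment of $\nu=\infty$), where it shows directly that the inverse of an analogous insertion map is $(1-\varepsilon)$-H\"older for every $\varepsilon>0$ and then invokes Lemma~\ref{lem212}; that passage can serve as a template for writing out the details you sketch. One point you pass over quickly and should make explicit in the upper bound: replacing an arbitrary $\delta$-cover of $S(\{M_{\sigma(n)}\})$ by one consisting of basic intervals at bounded cost relies on the uniform comparability $|I_{n}(x)|\asymp_{M}|I_{n+1}(x)|$, which holds precisely because the partial quotients are bounded by $M$.
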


\subsection{Pressure function and pre-dimensional number}
We now introduce the notions of the pressure function and pre-dimensional number in the continued fraction dynamical system.
For more details, we refer the reader to \cite{HMU}.

For $\mathcal{A}$ a finite or infinite subset of $\mathbb{N},$ we set
$$X_{\mathcal{A}}=\big\{x\in[0,1)\colon a_{n}(x)\in \mathcal{A} \text{ for all } n\geq 1\big\}.$$
The pressure function restricted to the subsystem $(X_{\mathcal{A}},T)$ with potential $\phi\colon [0,1)\to \mathbb{R} $ is defined as
\begin{equation}\label{e23}
P_{\mathcal{A}}(T,\phi)=\lim_{n\to\infty}\frac{\log\sum\limits_{(a_{1},\ldots,a_{n})\in \mathcal{A}^{n}}\sup\limits_{x\in X_{\mathcal{A}}}\exp{S_n \phi([a_{1},\ldots,a_{n}+x])}}{n},
\end{equation}
where $S_{n}\phi(x)=\phi(x)+\cdots+\phi(T^{n-1}(x))$ denotes the ergodic sum of $\phi$.
When $\mathcal{A}=\mathbb{N}$, we write $P(T,\phi)$ for $P_{\mathbb{N}}(T,\phi)$.

The $n$-th variation $\textrm{Var}_{n}(\phi)$ of $\phi$ is defined as
$$\textrm{Var}_{n}(\phi)=\sup\big\{|\phi(x)-\phi(y)|\colon I_{n}(x)=I_{n}(y)\big\}.$$
The following lemma shows the existence of the limit in (\ref{e23}).

\begin{lem} [\cite{PW}]
The limit defining $P_{\mathcal{A}}(T,\phi)$  in (\ref{e23}) exists. Moreover,
if $\phi\colon [0,1)\rightarrow \mathbb{R}$ satisfies $\textrm{Var}_{1}(\phi)<\infty$ and $\textrm{Var}_{n}(\phi)\rightarrow 0$ as $n\rightarrow \infty$,
  the value of $P_{\mathcal{A}}(T,\phi)$ remains the same even without taking the supremum over $x\in X_{\mathcal{A}}$ in (\ref{e23}).
\end{lem}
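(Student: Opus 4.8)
The plan is to prove existence of the limit via a subadditivity argument, and then the second assertion via a bounded-distortion estimate that controls the error incurred by dropping the supremum. For the first part, write $Z_n(\phi)=\sum_{(a_1,\dots,a_n)\in\mathcal{A}^n}\sup_{x\in X_{\mathcal{A}}}\exp S_n\phi([a_1,\dots,a_n+x])$. The key observation is that $S_{m+n}\phi$ along a cylinder of type $(a_1,\dots,a_m,b_1,\dots,b_n)$ splits as $S_m\phi$ evaluated at the point $[a_1,\dots,a_m+T^m(\cdot)]$ plus $S_n\phi$ evaluated at $[b_1,\dots,b_n+(\cdot)]$, so that taking suprema factorizes with an inequality in one direction: $Z_{m+n}(\phi)\le Z_m(\phi)\,Z_n(\phi)$. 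Hence $\log Z_n(\phi)$ is subadditive, and Fekete's lemma gives $\lim_n \frac{1}{n}\log Z_n(\phi)=\inf_n\frac{1}{n}\log Z_n(\phi)$, which is exactly $P_{\mathcal{A}}(T,\phi)$ (possibly $-\infty$, but under the stated variation hypotheses it is finite because $\mathrm{Var}_1(\phi)<\infty$ bounds $S_n\phi$ from below on each cylinder by a quantity comparable across points).

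For the second assertion, let $\widetilde Z_n(\phi)=\sum_{(a_1,\dots,a_n)\in\mathcal{A}^n}\exp S_n\phi([a_1,\dots,a_n])$ be the sum without the supremum (evaluating $\phi$ at, say, the left endpoint of each cylinder, or any fixed choice of point inside $I_n(a_1,\dots,a_n)$). Trivially $\widetilde Z_n(\phi)\le Z_n(\phi)$. For the reverse comparison, fix a cylinder $(a_1,\dots,a_n)$ and two points $x,y\in I_n(a_1,\dots,a_n)$; then $T^k(x)$ and $T^k(y)$ lie in the same cylinder of order $n-k$ for $0\le k\le n-1$, so
\begin{equation*}
\big|S_n\phi(x)-S_n\phi(y)\big|\le\sum_{k=0}^{n-1}\mathrm{Var}_{n-k}(\phi)=\sum_{j=1}^{n}\mathrm{Var}_j(\phi).
\end{equation*}
Denote $C_n=\sum_{j=1}^{n}\mathrm{Var}_j(\phi)$. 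Since $\mathrm{Var}_j(\phi)\to 0$, Cesàro convergence gives $C_n/n\to 0$. The displayed bound yields $Z_n(\phi)\le e^{C_n}\,\widetilde Z_n(\phi)$, hence
\begin{equation*}
\widetilde Z_n(\phi)\le Z_n(\phi)\le e^{C_n}\,\widetilde Z_n(\phi),
\end{equation*}
so $\frac{1}{n}\log Z_n(\phi)$ and $\frac{1}{n}\log \widetilde Z_n(\phi)$ differ by at most $C_n/n\to 0$. Therefore $\lim_n\frac{1}{n}\log\widetilde Z_n(\phi)$ exists and equals $P_{\mathcal{A}}(T,\phi)$, which is the claim.

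The main technical point to be careful about is the variation estimate: one must check that for $x,y$ in a common cylinder $I_n$ of order $n$, the iterates $T^k(x),T^k(y)$ genuinely share a cylinder of order $n-k$ (immediate from the definition of $T$ and of $a_k$), and that $\mathrm{Var}_1(\phi)<\infty$ is what keeps $Z_1(\phi)$ — and hence all $Z_n(\phi)$ via subadditivity — from being vacuously infinite, so that the limit is a genuine real number or $-\infty$ in a controlled way. In the application $\phi=-s(\log|T'|+\frac{\beta}{1-\beta}g(y))$, one has $\log|T'(x)|=-2\log x$ plus bounded error, the constant term is harmless, and the variations decay geometrically by the standard distortion property of the Gauss map (Lemma \ref{lem22} and the Hölder bound), so the hypotheses are met; but for the present lemma only the abstract properties $\mathrm{Var}_1(\phi)<\infty$ and $\mathrm{Var}_n(\phi)\to0$ are needed, and no further structure of $\phi$ should be invoked.
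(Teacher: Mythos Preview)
The paper does not prove this lemma; it is quoted from Walters' text \cite{PW} and stated without argument. Your proof is the standard one and is correct: subadditivity of $\log Z_n(\phi)$ follows from the cocycle identity $S_{m+n}\phi = S_m\phi + S_n\phi\circ T^m$ and factoring the supremum, so Fekete's lemma gives the limit; and the distortion bound $|S_n\phi(x)-S_n\phi(y)|\le\sum_{j=1}^{n}\mathrm{Var}_j(\phi)=o(n)$ for $x,y$ in a common $n$-cylinder shows that dropping the supremum changes $\frac{1}{n}\log Z_n$ by $o(1)$.

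One small imprecision worth flagging: your parenthetical that $\mathrm{Var}_1(\phi)<\infty$ keeps the partition sums finite is not accurate when $\mathcal{A}$ is infinite. Bounded first variation controls oscillation of $\phi$ within each first-order cylinder but says nothing about summability of $\sup\exp\phi$ across the (countably many) cylinders; for instance a bounded $\phi$ has zero variation yet gives $Z_1=\infty$ when $\mathcal{A}=\mathbb{N}$. Finiteness of the pressure is a separate hypothesis (summability of the potential, which in this paper's applications amounts to $s>\tfrac12$ for the geometric potential). The lemma as stated is happy to allow $P_{\mathcal{A}}(T,\phi)=+\infty$, and your subadditivity argument still yields existence of the limit in $[-\infty,+\infty]$; just excise the finiteness claim.
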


For $0<\alpha<1$ and $i\in \mathbb{N}$, we define 
\begin{equation*}
\widehat{s}_{n}(\mathcal{A},\alpha,{\tau(i)})
= \inf \left\{\rho \geq 0\colon \sum\limits_{a_{1},\ldots,a_{n}\in \mathcal{A}} \Big(\frac{1}{(\tau(i))^{\frac{n\alpha}{1-\alpha}}q_{n}(a_{1},\ldots,a_{n})}\Big)^{2\rho}\leq 1\right\}.
\end{equation*}
Following \cite{WW}, we call $\widehat{s}_{n}(\mathcal{A},\alpha,{\tau(i)})$  the $n$-th pre-dimensional number with respect to $\mathcal{A}$ and $\alpha$.  The properties of pre-dimensional numbers are  presented in the following lemmas; the original ideas for the proofs date back to Good \cite{G} (see also \cite{M1}).

\begin{lem}[\cite{WW}]\label{lem26}
Let $\mathcal{A}$ be a finite or infinite subset of $\mathbb{N}.$ For $0<\alpha<1$ and $i\in \mathbb{N}$,
the limit $\lim_{n\to\infty}\widehat{s}_{n}(\mathcal{A},\alpha,{\tau(i)})$ exists, denoted by $s(\mathcal{A},\alpha,{\tau(i)})$.
\end{lem}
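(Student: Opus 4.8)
The plan is to show that the sequence $\{\widehat{s}_n(\mathcal{A},\alpha,\tau(i))\}_{n\ge1}$ is convergent by establishing a suitable sub- or super-multiplicativity property of the associated sums, in the spirit of Good and of Mauldin--Urbański. Fix $0<\alpha<1$ and $i\in\mathbb{N}$, abbreviate $t=\tau(i)>1$ and $\gamma=\alpha/(1-\alpha)>0$, and for $\rho\ge0$ and $n\ge1$ put
$$
\Sigma_n(\rho)=\sum_{a_1,\ldots,a_n\in\mathcal{A}}\Big(\frac{1}{t^{\,n\gamma}\,q_n(a_1,\ldots,a_n)}\Big)^{2\rho}
= t^{-2\rho\gamma n}\sum_{a_1,\ldots,a_n\in\mathcal{A}} q_n(a_1,\ldots,a_n)^{-2\rho},
$$
so that $\widehat{s}_n=\widehat{s}_n(\mathcal{A},\alpha,t)=\inf\{\rho\ge0\colon\Sigma_n(\rho)\le1\}$. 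First I would record the elementary facts: for each fixed $n$, $\rho\mapsto\Sigma_n(\rho)$ is continuous and strictly decreasing on the range where it is finite (because every factor $\big(t^{n\gamma}q_n\big)^{-1}$ is $<1$, using $q_n\ge1$ and $t^{n\gamma}>1$), it tends to $+\infty$ as $\rho\to0^+$ when $\mathcal{A}$ is infinite (and is finite for all $\rho>0$ by Lemma \ref{lem21}(1), since $q_n\ge\prod a_k$ gives geometric-type convergence), and tends to $0$ as $\rho\to\infty$; hence $\widehat{s}_n$ is the unique zero of $\Sigma_n(\rho)-1$ and $0<\widehat{s}_n\le 1$. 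When $\mathcal{A}$ is finite the same monotonicity holds and $\widehat{s}_n$ is still characterized as the unique solution of $\Sigma_n(\widehat{s}_n)=1$ (the sum being a finite positive combination of exponentials).

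The key step is the multiplicativity estimate. Using the recursive structure together with Lemma \ref{lem21}(2), namely $q_{m+n}(a_1,\ldots,a_{m+n})\asymp q_m(a_1,\ldots,a_m)\,q_n(a_{m+1},\ldots,a_{m+n})$ with constants $1$ and $2$, one obtains for every $\rho\ge0$
$$
2^{-2\rho}\,\Sigma_m(\rho)\,\Sigma_n(\rho)\ \le\ \Sigma_{m+n}(\rho)\ \le\ 2^{2\rho}\,\Sigma_m(\rho)\,\Sigma_n(\rho),
$$
because the exponential prefactor $t^{-2\rho\gamma(m+n)}$ splits exactly and the $q$-sum is quasi-multiplicative. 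Thus $\log\big(2^{2\rho}\Sigma_n(\rho)\big)$ is subadditive in $n$ and $\log\big(2^{-2\rho}\Sigma_n(\rho)\big)$ is superadditive. I would then convert this into control of the zeros $\widehat{s}_n$: evaluating the right-hand inequality at $\rho=\max(\widehat{s}_m,\widehat{s}_n)$ and the left-hand inequality at $\rho=\min(\widehat{s}_m,\widehat{s}_n)$, and using strict monotonicity of each $\Sigma_k$, one shows that $\{\widehat{s}_n\}$ is a Cauchy sequence; quantitatively $\widehat{s}_{m+n}$ lies within $O(1/(m+n))$ of the appropriate convex combination of $\widehat{s}_m$ and $\widehat{s}_n$, the $O(1/(m+n))$ coming from the $\pm 2\rho\log 2$ terms divided out against the linear-in-$n$ growth rate of $-\log\Sigma_n$ near its zero. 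Concretely, since $\Sigma_n(\rho)\le 2^{2\rho}\Sigma_1(\rho)^n$ and the derivative $\partial_\rho\log\Sigma_n(\rho)\le -2\gamma\log t\cdot n<0$ is bounded away from $0$ uniformly (each $q_n\ge1$ forces $\Sigma_n(\rho)\le t^{-2\rho\gamma n}|\mathcal{A}_n|$-type bounds, but more simply the prefactor alone gives the uniform negative slope $-2\gamma\log t\,n$), a shift of the value of $\log\Sigma_n$ by a bounded amount moves its zero by at most $C/n$. Hence $n\widehat{s}_n$ is, up to an additive $O(1)$, subadditive and superadditive simultaneously, so $\widehat{s}_n$ converges; call the limit $s(\mathcal{A},\alpha,t)$.

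The main obstacle I anticipate is making the last implication fully rigorous when $\mathcal{A}$ is infinite, where $\Sigma_n(\rho)=+\infty$ for $\rho$ below some threshold and one must be careful that the quasi-multiplicativity and the monotonicity arguments are applied only on the region of finiteness; this is handled by noting that for every $\rho>0$ all sums $\Sigma_n(\rho)$ are finite (geometric convergence from $q_n\ge\prod a_k$), so $\widehat{s}_n\in(0,1]$ lies safely inside the finite region and all manipulations take place there. A secondary technical point is the uniform lower bound on $|\partial_\rho\log\Sigma_n|$ needed to turn a bounded additive perturbation of $\log\Sigma_n$ into an $O(1/n)$ perturbation of its root; this follows because the explicit prefactor contributes exactly $-2\gamma n\log t$ to that derivative, which already dominates, so no delicate estimate on the $q$-sum's $\rho$-derivative is required. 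Once convergence is in hand, monotonicity of $\Sigma_n$ in $\mathcal{A}$ shows the limit is monotone under inclusion $\mathcal{A}\subseteq\mathcal{A}'$, which is the form in which the lemma will be used later; I would state that as an immediate corollary.
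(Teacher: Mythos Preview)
The paper does not give its own proof of this lemma; it is quoted from \cite{WW} (Wang--Wu), so there is no in-paper argument to compare against. Your plan follows exactly the Good/Mauldin--Urba\'nski line that \cite{WW} uses: quasi-multiplicativity of the sums $\Sigma_n(\rho)$ coming from Lemma~\ref{lem21}(2), combined with a uniform lower bound on $-\partial_\rho\log\Sigma_n$ of order $n$ to convert the bounded additive defect into an $O(1/n)$ perturbation of the roots $\widehat{s}_n$. So the approach is the standard and expected one.

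One genuine slip to fix: your assertion that $\Sigma_n(\rho)<\infty$ for \emph{all} $\rho>0$ when $\mathcal{A}$ is infinite is false. From $q_n\ge\prod_k a_k$ you only get $\sum q_n^{-2\rho}\le\big(\sum_{a\in\mathcal{A}}a^{-2\rho}\big)^n$, which converges only for $\rho>\tfrac12$; indeed for $\mathcal{A}=\mathbb{N}$ one has $\Sigma_n(\rho)=+\infty$ for $\rho\le\tfrac12$. This does not damage the argument, because $\widehat{s}_n>\tfrac12$ always: using $q_n\le\prod_k(a_k+1)$ gives $\Sigma_n(\rho)\ge t^{-2\rho\gamma n}\big(\zeta(2\rho)-1\big)^n\to\infty$ as $\rho\to\tfrac12^+$, so the root lies strictly above $\tfrac12$ and all your manipulations take place in the region $(\tfrac12,1]$ where finiteness, continuity, and differentiability of $\Sigma_n$ are unproblematic. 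You should state this correctly. Also, your Cauchy-sequence conclusion is only sketched along multiples $kn$; to finish cleanly write an arbitrary $m=kn+r$ with $0\le r<n$, apply the quasi-multiplicativity once more to absorb the remainder block (its contribution is bounded independently of $k$), and then let $n\to\infty$. With these two clarifications the proof is complete.
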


By (\ref{e22}) and the definition of $\widehat{s}_{n}(\mathcal{A},\alpha,{\tau(i)}),$ we know
$0\le\widehat{s}_{n}(\mathcal{A},\alpha,{\tau(i)})\le1.$ Furthermore, Lemma \ref{lem26} implies that $0\le s(\mathcal{A},\alpha,{\tau(i)})\le1.$

\begin{lem}[\cite{WW}]\label{lem27}
 For any $B\in \mathbb{N},$ put $\mathcal{A}_{B}=\{1,\ldots,B\}.$
The limit  $\lim_{B \to \infty}s(\mathcal{A}_{B},\alpha,{\tau(i)})$ exists, and is  equal to $s(\mathbb{N},\alpha,{\tau(i)})$.
\end{lem}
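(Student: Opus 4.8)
\textbf{Proof proposal for Lemma \ref{lem27}.}
The plan is to show the two inequalities $\limsup_{B\to\infty}s(\mathcal{A}_{B},\alpha,{\tau(i)})\le s(\mathbb{N},\alpha,{\tau(i)})$ and $\liminf_{B\to\infty}s(\mathcal{A}_{B},\alpha,{\tau(i)})\ge s(\mathbb{N},\alpha,{\tau(i)})$, after first observing that $\{s(\mathcal{A}_{B},\alpha,{\tau(i)})\}_{B}$ is nondecreasing in $B$ (since $\mathcal{A}_{B}\subset\mathcal{A}_{B+1}$ forces $\widehat{s}_{n}(\mathcal{A}_{B},\alpha,{\tau(i)})\le\widehat{s}_{n}(\mathcal{A}_{B+1},\alpha,{\tau(i)})$ term-by-term, hence the same for the limits in $n$ by Lemma \ref{lem26}); monotonicity together with the uniform bound $s(\mathcal{A}_{B},\alpha,{\tau(i)})\le1$ already guarantees that $\lim_{B\to\infty}s(\mathcal{A}_{B},\alpha,{\tau(i)})$ exists, so only the identification of the limit remains. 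The upper bound is immediate from the same monotonicity principle applied with $\mathcal{A}_{B}\subset\mathbb{N}$: $s(\mathcal{A}_{B},\alpha,{\tau(i)})\le s(\mathbb{N},\alpha,{\tau(i)})$ for every $B$, whence the limit is $\le s(\mathbb{N},\alpha,{\tau(i)})$.

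The substance is the lower bound. Fix $\rho<s(\mathbb{N},\alpha,{\tau(i)})$; I want to produce $B$ with $s(\mathcal{A}_{B},\alpha,{\tau(i)})\ge\rho$, i.e. (using that $\widehat s_n(\mathcal A_B,\alpha,\tau(i))$ increases to its limit, or rather passing through the pre-dimensional quantities directly) an $n$ and a $B$ with
$$\sum_{a_{1},\ldots,a_{n}\in\mathcal{A}_{B}}\Big(\frac{1}{(\tau(i))^{\frac{n\alpha}{1-\alpha}}q_{n}(a_{1},\ldots,a_{n})}\Big)^{2\rho}>1.$$
Since $\rho<s(\mathbb{N},\alpha,{\tau(i)})=\lim_{n}\widehat s_{n}(\mathbb{N},\alpha,{\tau(i)})$, for all large $n$ we have $\rho<\widehat s_{n}(\mathbb{N},\alpha,{\tau(i)})$, which by definition of the infimum means the full sum over $(a_1,\dots,a_n)\in\mathbb{N}^n$ with exponent $2\rho$ is $>1$ (strictly, one takes $\rho'$ with $\rho<\rho'<\widehat s_n$ so that the $\mathbb N^n$-sum at $\rho'$ exceeds $1$, then uses $\rho<\rho'$ to get a definite gap). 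Fix such an $n$ and let $S$ be the value of that (convergent, positive-term) series; choose a finite truncation, i.e. a $B$, so that the partial sum over $\{1,\dots,B\}^{n}$ exceeds $1$. This gives $\widehat s_{n}(\mathcal{A}_{B},\alpha,{\tau(i)})\ge\rho$. The one wrinkle is that I need this at the level of $s(\mathcal A_B,\cdot)=\lim_m\widehat s_m(\mathcal A_B,\cdot)$ rather than at a single scale $n$; this is handled by the standard submultiplicativity/superadditivity built into the pre-dimensional numbers (the mechanism behind Lemma \ref{lem26}), namely that for a finite alphabet $\widehat s_{m}(\mathcal A_B,\alpha,\tau(i))$ converges and in fact the defining sums are essentially supermultiplicative in $m$ thanks to Lemma \ref{lem21}(2) ($q_{m+k}\asymp q_m q_k$), so a single-scale lower bound $\widehat s_n\ge\rho$ propagates to $s(\mathcal A_B,\alpha,\tau(i))\ge\rho$. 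Letting $\rho\uparrow s(\mathbb{N},\alpha,{\tau(i)})$ finishes the lower bound.

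The main obstacle is precisely this last point: transferring the strict inequality from the $n$-th pre-dimensional number to the limiting dimension $s(\mathcal{A}_{B},\alpha,{\tau(i)})$ without circularity. The cleanest route is to record, as part of the proof (or cite it from \cite{WW,G}), the two-sided comparison
$$\widehat s_{n}(\mathcal A_B,\alpha,\tau(i)) - \frac{c}{n}\ \le\ s(\mathcal A_B,\alpha,\tau(i))\ \le\ \widehat s_{n}(\mathcal A_B,\alpha,\tau(i)) + \frac{c}{n}$$
for a constant $c=c(i)$ coming from the constants $2$ in Lemma \ref{lem21}(2) and Lemma \ref{lem22}; granting this, choosing $n$ large and then $B$ large as above yields $s(\mathcal A_B,\alpha,\tau(i))\ge\rho-c/n$, and letting first $B\to\infty$, then $n\to\infty$, then $\rho\uparrow s(\mathbb N,\alpha,\tau(i))$ completes the argument. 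An essentially equivalent alternative avoids pre-dimensional numbers entirely: identify $s(\mathcal A_B,\alpha,\tau(i))$ with $\dim_H$ of an associated conformal-iterated-function-system limit set (as in \cite{HMU}), observe these limit sets increase with $B$ to a set whose closure carries the system for $\mathbb N$, and invoke the continuity of Hausdorff dimension for increasing unions of CIFS limit sets — but since the paper is developing the elementary pre-dimensional machinery, I would keep the proof within that framework.
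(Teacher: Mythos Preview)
The paper does not supply a proof of this lemma: it is quoted from \cite{WW} (Wang--Wu) and no argument is given in the text. So there is nothing in the paper to compare your proposal against.

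Your sketch is the standard route and is essentially correct. Monotonicity in $B$ and the upper bound are immediate, and for the lower bound the only nontrivial step is exactly the one you flag: passing from a single-scale inequality $\widehat s_{n}(\mathcal A_B,\alpha,\tau(i))\ge\rho$ to the limit $s(\mathcal A_B,\alpha,\tau(i))\ge\rho$. The quantitative bound $|\widehat s_n-s|\le c/n$ you propose does hold, and it comes precisely from the two-sided quasi-multiplicativity $q_m q_k\le q_{m+k}\le 2q_m q_k$ of Lemma~\ref{lem21}(2) via Fekete's lemma; this is how it is done in \cite{WW,G,M1}. One small caution: from the almost-supermultiplicative direction $f_{m+n}(\rho)\ge 4^{-\rho}f_m(\rho)f_n(\rho)$ the single condition $f_n(\rho)>1$ is not by itself enough to force $f_{kn}(\rho)\to\infty$; you need either the slightly stronger $f_n(\rho)>4^{\rho}$ (which you can arrange by taking $n$ large, since for $\rho<s(\mathbb N,\alpha,\tau(i))$ the full sums grow exponentially), or---more cleanly---the $c/n$ estimate you state. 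With that in place the argument closes.
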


Similarly to pre-dimensional numbers $\{\widehat{s}_{n}(\mathcal{A},\alpha,{\tau(i)}\},$  we define
\begin{equation*}
s_{n}(\mathcal{A},\alpha,{\tau(i)})=
\inf \left\{\rho \geq 0\colon \sum\limits_{a_{1},\ldots,a_{n-\lfloor n\alpha\rfloor}\in \mathcal{A}}
               \Big(\frac{1}{q_{n}(a_{1},\ldots,a_{n-\lfloor na\rfloor},i,\ldots,i)}\Big)^{2\rho}\leq 1\right\}.
\end{equation*}
\begin{rem}\label{rem2}
We remark that
$$\sum\limits_{a_{1},\ldots,a_{n}\in \mathcal{A}} \Big(\frac{1}{(\tau(i))^{\frac{n\alpha}{1-\alpha}}q_{n}(a_{1},\ldots,a_{n})}\Big)^{2\widehat{s}_{n}(\mathcal{A},\alpha,{\tau(i)})}\leq 1$$
and
$$\sum_{a_{1},\ldots,a_{n-\lfloor n\alpha\rfloor}\in \mathcal{A}}
\Big(\frac{1}{q_{n}(a_{1},\ldots,a_{n-\lfloor n\alpha\rfloor},i,\ldots,i)}\Big)^{2s_{n}(\mathcal{A},\alpha,{\tau(i)})}\leq1,$$
with equalities holding when $\mathcal{A}$ is finite.
\end{rem}
By Lemmas \ref{lem26} and \ref{lem27}, we have the following result.
\begin{lem} \label{lem29}
Let $\mathcal{A}$ be a finite or infinite subset of $\mathbb{N}.$ For $0<\alpha<1$ and $i\in \mathbb{N},$  we have
$$\lim_{n\to\infty}s_{n}(\mathcal{A},\alpha,{\tau(i)})=s(\mathcal{A},\alpha,{\tau(i)}).$$
In particular, if $\mathcal{A}=\mathbb{N},$ then
$$\lim_{n\to\infty}s_{n}(\mathbb{N},\alpha,{\tau(i)})=s(\mathbb{N},\alpha,{\tau(i)}).$$
\end{lem}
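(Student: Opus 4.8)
The plan is to compare $s_n(\mathcal{A},\alpha,\tau(i))$ directly with the pre-dimensional number $\widehat{s}_{m}(\mathcal{A},\alpha,\tau(i))$ for $m=m(n):=n-\lfloor n\alpha\rfloor$, and then to invoke Lemma \ref{lem26}. Since $0<\alpha<1$ we have $m(n)\to\infty$ as $n\to\infty$, so it suffices to show that
\[
  s_n(\mathcal{A},\alpha,\tau(i))-\widehat{s}_{m(n)}(\mathcal{A},\alpha,\tau(i))\longrightarrow 0\qquad(n\to\infty),
\]
for then Lemma \ref{lem26} yields $\lim_n s_n(\mathcal{A},\alpha,\tau(i))=\lim_m\widehat{s}_m(\mathcal{A},\alpha,\tau(i))=s(\mathcal{A},\alpha,\tau(i))$, and the stated special case is just this identity for $\mathcal{A}=\mathbb{N}$.

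First I would set up a two-sided comparison of the quantities appearing in the two defining series. Writing a generic word as $a_1,\dots,a_m\in\mathcal{A}$ followed by $\lfloor n\alpha\rfloor$ copies of $i$, Lemma \ref{lem21}(2) makes $q_n(a_1,\dots,a_m,i,\dots,i)$ comparable (up to a factor in $[1,2]$) to $q_m(a_1,\dots,a_m)\,q_{\lfloor n\alpha\rfloor}(i,\dots,i)$, and Lemma \ref{lem21}(3) makes $q_{\lfloor n\alpha\rfloor}(i,\dots,i)$ comparable (up to a factor in $[\tfrac12,2]$) to $(\tau(i))^{\lfloor n\alpha\rfloor}$. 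Finally, since $\tfrac{m\alpha}{1-\alpha}-\lfloor n\alpha\rfloor$ lies in $[0,\tfrac{1}{1-\alpha})$, the powers $(\tau(i))^{\lfloor n\alpha\rfloor}$ and $(\tau(i))^{m\alpha/(1-\alpha)}$ agree up to a factor depending only on $\alpha$ and $i$. Combining these, there are constants $0<c\le1\le C$, depending only on $\alpha$ and $i$, with
\[
  c\ \le\ \frac{q_n(a_1,\dots,a_m,i,\dots,i)}{(\tau(i))^{m\alpha/(1-\alpha)}\,q_m(a_1,\dots,a_m)}\ \le\ C\qquad\text{for all }(a_1,\dots,a_m)\in\mathcal{A}^m .
\]
Raising to the power $2\rho$, summing over $\mathcal{A}^m$, and using $0\le\rho\le1$, this gives
\[
  C^{-2}\,\Sigma_m(\rho)\ \le\ \sum_{a_1,\dots,a_m\in\mathcal{A}}\Big(\frac{1}{q_n(a_1,\dots,a_m,i,\dots,i)}\Big)^{2\rho}\ \le\ c^{-2}\,\Sigma_m(\rho),
\]
where $\Sigma_m(\rho):=\sum_{a_1,\dots,a_m\in\mathcal{A}}\big((\tau(i))^{m\alpha/(1-\alpha)}q_m(a_1,\dots,a_m)\big)^{-2\rho}$ is precisely the series whose critical exponent is $\widehat{s}_m(\mathcal{A},\alpha,\tau(i))$.

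Next I would convert this comparison of series into the convergence of critical exponents. The decisive point is that every base of a term of $\Sigma_m(\rho)$ is at most $1/q_m\le 2^{-(m-1)/2}$ by Lemma \ref{lem21}(1), hence $\Sigma_m(\rho+\delta)\le 2^{-(m-1)\delta}\Sigma_m(\rho)$ for all $\delta\ge0$. By Remark \ref{rem2}, $\Sigma_m(\widehat{s}_m)\le1$, so $\sum_{a_1,\dots,a_m}(q_n(\dots,i,\dots,i))^{-2(\widehat{s}_m+\delta)}\le c^{-2}2^{-(m-1)\delta}\le1$ once $\delta\ge\delta_m:=\tfrac{2\log(1/c)}{(m-1)\log2}$, which forces $s_n(\mathcal{A},\alpha,\tau(i))\le\widehat{s}_m(\mathcal{A},\alpha,\tau(i))+\delta_m$. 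Symmetrically, Remark \ref{rem2} gives $\sum_{a_1,\dots,a_m}(q_n(\dots,i,\dots,i))^{-2s_n}\le1$, hence $\Sigma_m(s_n)\le C^2$ and $\Sigma_m(s_n+\delta')\le C^2 2^{-(m-1)\delta'}\le1$ once $\delta'\ge\delta_m':=\tfrac{2\log C}{(m-1)\log2}$, giving $\widehat{s}_m(\mathcal{A},\alpha,\tau(i))\le s_n(\mathcal{A},\alpha,\tau(i))+\delta_m'$. Since $\delta_m,\delta_m'\to0$ as $n\to\infty$, the required convergence follows, and the $\mathcal{A}=\mathbb{N}$ statement is the case $\mathcal{A}=\mathbb{N}$.

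The hard part is the last step: a bound comparing two Dirichlet-type series up to a fixed multiplicative constant does not by itself pin down their critical exponents. The resolution exploits that the exponents $\rho$ stay in the bounded window $[0,1]$ together with the exponential growth $q_m\ge2^{(m-1)/2}$, so that an arbitrarily small increment of $\rho$ absorbs the constants $c$ and $C$. Everything preceding it — the recursion estimates of Lemma \ref{lem21}(2)--(3) and the elementary bound on $\lfloor n\alpha\rfloor-\tfrac{m\alpha}{1-\alpha}$ — is routine bookkeeping.
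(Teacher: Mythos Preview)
Your proposal is correct and follows essentially the same route as the paper: both arguments compare $s_n$ with $\widehat{s}_{m}$ for $m=n-\lfloor n\alpha\rfloor$ by using Lemma~\ref{lem21}(2)--(3) to pass between $q_n(a_1,\dots,a_m,i,\dots,i)$ and $(\tau(i))^{m\alpha/(1-\alpha)}q_m(a_1,\dots,a_m)$, then absorb the resulting bounded multiplicative constants via the exponential lower bound $q_m\ge 2^{(m-1)/2}$ from Lemma~\ref{lem21}(1), and finally invoke Lemma~\ref{lem26}. The only difference is packaging: the paper runs an $\varepsilon$-chain directly on the defining series, while you isolate explicit constants $c,C$ and corresponding increments $\delta_m,\delta_m'$; these are the same computation.
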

\begin{proof}
For $\varepsilon>0$ and $n$ large enough, we have
\begin{equation}\label{equ2.4}
2^{\frac{n-\lfloor n\alpha\rfloor}{2}\varepsilon}>64,
\end{equation}
\begin{equation}\label{equ2.5}
\frac{3}{(1-\alpha)(n\alpha-1)}+\frac{\log 4}{n\alpha-1}<\varepsilon,
\end{equation}
\begin{equation}\label{equ2.6}
|\widehat{s}_{n}(\mathcal{A},\alpha,{\tau(i)})-s(\mathcal{A},\alpha,{\tau(i)})|<\frac{\varepsilon}{2}.
\end{equation}
On the one hand, by Remark \ref{rem2},   we deduce that
\begin{align*}
  1 \ge& \sum_{a_1,\ldots,a_{n-\lfloor n\alpha\rfloor}\in \mathcal{A}}\Big(\frac{1}{q_{n}(a_1,\ldots,a_{n-\lfloor n\alpha\rfloor},i,\ldots,i)}\Big)^{2s_{n}(\mathcal{A},\alpha,{\tau(i)})} \\
    \ge & \sum_{a_1,\ldots,a_{n-\lfloor n\alpha\rfloor}\in \mathcal{A}}
            \Big(\frac{1}{2q_{n-\lfloor n\alpha\rfloor}(a_1,\ldots,a_{n-\lfloor n\alpha\rfloor})q_{\lfloor n\alpha\rfloor}(i,\ldots,i)}\Big)^{2s_{n}(\mathcal{A},\alpha,{\tau(i)})}\\
    \ge & \sum_{a_1,\ldots,a_{n-\lfloor n\alpha\rfloor}\in \mathcal{A}}\Big(\frac{1}{4q_{n-\lfloor n\alpha\rfloor}(a_1,\ldots,a_{n-\lfloor n\alpha\rfloor})
                (\tau(i))^{\frac{\alpha}{1-\alpha}(n-\lfloor n\alpha\rfloor)}}\Big)^{2s_{n}(\mathcal{A},\alpha,{\tau(i)})}\\
     \ge & \sum_{a_1,\ldots,a_{n-\lfloor n\alpha\rfloor}\in \mathcal{A}}\Big(\frac{1}{q_{n-\lfloor n\alpha\rfloor}(a_1,\ldots,a_{n-\lfloor n\alpha\rfloor})
                  (\tau(i))^{\frac{\alpha}{1-\alpha}(n-\lfloor n\alpha\rfloor)}}\Big)^{2s_{n}(\mathcal{A},\alpha,{\tau(i)})+\varepsilon},
\end{align*}
where the second inequality holds by Lemma \ref{lem21}(2);
the third inequality is right by Lemma \ref{lem21}(3) and the fact that $\frac{\alpha}{1-\alpha}(n-\lfloor n\alpha\rfloor)\ge \lfloor n\alpha\rfloor$ for $n\in \mathbb{N};$
the last inequality is true by Lemma \ref{lem21}(1) and (\ref{equ2.4}). This means that
$$s_{n}(\mathcal{A},\alpha,{\tau(i)})+\frac{\varepsilon}{2}\ge \widehat{s}_{n-\lfloor n\alpha\rfloor}(\mathcal{A},\alpha,{\tau(i)}).$$

On the other hand,   we have
\begin{align*}
 1 \ge & \sum_{a_1,\ldots,a_{n-\lfloor n\alpha\rfloor}\in \mathcal{A}}\Big(\frac{1}{q_{n-\lfloor n\alpha\rfloor}(a_1,\ldots,a_{n-\lfloor n\alpha\rfloor})
                    (\tau(i))^{\frac{\alpha}{1-\alpha}(n-\lfloor n\alpha\rfloor)}}\Big)^{2\widehat{s}_{n-\lfloor n\alpha\rfloor}(\mathcal{A},\alpha,{\tau(i)})}\\
  \ge & \sum_{a_1,\ldots,a_{n-\lfloor n\alpha\rfloor}\in \mathcal{A}}\Big(\frac{1}{q_{n-\lfloor n\alpha\rfloor}(a_1,\ldots,a_{n-\lfloor n\alpha\rfloor})
                    (\tau(i))^{\lfloor n\alpha\rfloor+\frac{1}{1-\alpha}}}\Big)^{2\widehat{s}_{n-\lfloor n\alpha\rfloor}(\mathcal{A},\alpha,{\tau(i)})}\\
  \ge & \sum_{a_1,\ldots,a_{n-\lfloor n\alpha\rfloor}\in \mathcal{A}}\Big(\frac{1}{q_{n}(a_1,\ldots,a_{n-\lfloor n\alpha\rfloor},i,\ldots,i)}\Big)^{2\widehat{s}_{n-\lfloor n\alpha\rfloor}(\mathcal{A},\alpha,{\tau(i)})+\varepsilon},
\end{align*}
where the second inequality is obtained by $\frac{\alpha}{1-\alpha}(n-\lfloor n\alpha\rfloor)\le \lfloor n\alpha\rfloor+\frac{1}{1-\alpha}$ for $n\in \mathbb{N}$;
      the last inequality holds by Lemma \ref{lem21}(3) and (\ref{equ2.5}). This implies that
$$s_{n}(\mathcal{A},\alpha,{\tau(i)})\le \widehat{s}_{n-\lfloor n\alpha\rfloor}(\mathcal{A},\alpha,{\tau(i)})+\frac{\varepsilon}{2}.$$

Thus, by (\ref{equ2.6}), we obtain that
$$|s_{n}(\mathcal{A},\alpha,{\tau(i)})-s(\mathcal{A},\alpha,{\tau(i)})|<\varepsilon$$
for $n$ large enough. This completes the proof.
\end{proof}

For simplicity, write $s_{n}(\alpha,{\tau(i)})$ for $s_{n}(\mathbb{N},\alpha,{\tau(i)}),$ $s(\alpha,{\tau(i)})$ for $s(\mathbb{N},\alpha,{\tau(i)}).$
\begin{lem} [\cite{WW}] \label{lem210}
For $ 0<\alpha<1$ and $i\in \mathbb{N}$, we have$\colon$

{\rm{(1)}} $s(\alpha,{\tau(i)})>\frac{1}{2}$;

{\rm{(2)}} $s(\alpha,{\tau(i)})$ is non-increasing and continuous with respect to $\alpha$;

{\rm{(3)}} $\lim_{\alpha \to 0}{s(\alpha,{\tau(i)})}=1$ and $\lim_{\alpha \to 1}{s(\alpha,{\tau(i)})}=\frac{1}{2}.$
\end{lem}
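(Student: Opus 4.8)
The plan is to reduce each assertion to a statement about the limit pressure-type function $s(\alpha,\tau(i))=\lim_n\widehat{s}_n(\mathbb{N},\alpha,\tau(i))$, and then exploit monotonicity of the defining series in the relevant parameters. Recall that $\widehat{s}_n(\mathbb{N},\alpha,\tau(i))$ is defined as the infimum of those $\rho$ for which $\sum_{a_1,\dots,a_n}\big((\tau(i))^{-n\alpha/(1-\alpha)}q_n(a_1,\dots,a_n)^{-1}\big)^{2\rho}\le 1$; since the $n$-th term is continuous and strictly decreasing in $\rho$ on the range where it is finite, $\widehat{s}_n$ is the unique $\rho$ making the sum equal to $1$ (using Remark \ref{rem2} and that $\mathbb{N}$ is handled as the increasing limit of the finite truncations $\mathcal{A}_B$ via Lemma \ref{lem27}).

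For (1): I would first estimate $\widehat{s}_n$ from below. Using Lemma \ref{lem21}(1), $q_n(a_1,\dots,a_n)\le\prod_{k=1}^n(a_k+1)$, so at $\rho=1/2$ the sum $\sum_{a_1,\dots,a_n}(\tau(i))^{-n\alpha/(1-\alpha)}q_n^{-1}$ is bounded below by $(\tau(i))^{-n\alpha/(1-\alpha)}\sum_{a_1,\dots,a_n}\prod(a_k+1)^{-1}$, and $\sum_{a\ge1}(a+1)^{-1}=\infty$; more carefully one compares with $\sum_{a\ge1}1/(a(a+1))=1$ and a geometric-type gain. The point is that $\sum_{a_1,\dots,a_n}q_n^{-1}$ grows like $c^n$ with $c>\tau(i)^{\alpha/(1-\alpha)}$ for $\alpha$ small, forcing $\widehat{s}_n>1/2$ uniformly; letting $n\to\infty$ gives $s(\alpha,\tau(i))\ge 1/2$, and a strict-inequality refinement (the series at $\rho=1/2$ diverges superexponentially relative to the $\tau(i)$-factor) yields $s(\alpha,\tau(i))>1/2$. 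This is essentially the classical Good-type argument, so I would cite \cite{G,WW} for the details.

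For (2): monotonicity is immediate — the map $\alpha\mapsto(\tau(i))^{-n\alpha/(1-\alpha)}$ is decreasing since $\alpha/(1-\alpha)$ is increasing on $(0,1)$ and $\tau(i)>1$, so each term in the defining series decreases with $\alpha$ at fixed $\rho$, hence $\widehat{s}_n(\mathbb{N},\alpha,\tau(i))$ is non-increasing in $\alpha$, and the limit $s(\alpha,\tau(i))$ inherits this. Continuity requires an equicontinuity / uniform-convergence argument: one shows the convergence $\widehat{s}_n\to s$ is locally uniform in $\alpha$ (the pre-dimensional numbers are uniformly Lipschitz in $\alpha$ on compact subsets of $(0,1)$, because $\partial_\alpha$ of the exponent $n\alpha/(1-\alpha)$ is controlled and the sum is bounded away from the degenerate regime by part (1)), and each $\widehat{s}_n$ is continuous in $\alpha$ by the implicit function theorem applied to $\sum(\cdots)^{2\rho}=1$; a locally uniform limit of continuous functions is continuous. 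For (3): as $\alpha\to0^+$ the factor $(\tau(i))^{-n\alpha/(1-\alpha)}\to1$ for each fixed $n$, so $\widehat{s}_n(\mathbb{N},\alpha,\tau(i))\to\widehat{s}_n(\mathbb{N},0,\tau(i))$, which equals the value $\rho$ with $\sum_{a_1,\dots,a_n}q_n^{-2\rho}\le1$; by Lemma \ref{lem22} this is comparable to $\sum|I_n(a_1,\dots,a_n)|^\rho$, whose critical exponent is $\dim_H[0,1)=1$, giving $s(0,\tau(i))=1$ (one must interchange the $\alpha\to0$ and $n\to\infty$ limits, justified again by the uniform convergence from part (2)). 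For $\alpha\to1^-$, the $\tau(i)$-penalty dominates: $(\tau(i))^{-n\alpha/(1-\alpha)}$ decays superexponentially, so to keep the sum $\le1$ we may take $\rho$ as close to the minimal admissible value as we like; since $q_n\ge 2^{(n-1)/2}$ by Lemma \ref{lem21}(1), the sum at any $\rho>1/2$ already has the $q_n^{-2\rho}$ part summable and the $\tau(i)$-part only helps, while at $\rho=1/2$ one checks (balancing $q_n^{-1}\ge$ the number of terms times smallest term) that the sum is $\le1$ for $\alpha$ near $1$ — hence $s(\alpha,\tau(i))\to 1/2$.

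The main obstacle is the justification of continuity in (2) and the limit-interchange in (3): one needs the convergence $\widehat{s}_n\to s$ to be uniform on compact $\alpha$-subintervals of $(0,1)$ (and appropriately controlled as $\alpha\to0$ or $\alpha\to1$), which is the technical heart. Since Lemmas \ref{lem26} and \ref{lem27} already establish the existence of the limits and these quantitative refinements are exactly what is carried out in \cite{WW} (following \cite{G,M1}), I would present the argument in the structure above and defer the uniform-convergence estimate to those references rather than reproving it in full.
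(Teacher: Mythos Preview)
The paper gives no proof of this lemma: it is stated with the citation \cite{WW} and no argument is supplied. So there is nothing in the paper to compare your attempt against; the paper's ``proof'' is simply the reference to Wang--Wu.

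That said, your sketch has the right architecture in parts (2) and (3)---monotonicity is indeed immediate from the fact that $\alpha\mapsto\alpha/(1-\alpha)$ is increasing, and the Bowen-equation viewpoint for the endpoint limits is standard---but your argument for (1) is muddled. You write that $\sum_{a_1,\dots,a_n}q_n^{-1}$ ``grows like $c^n$''; in fact this sum is \emph{infinite} for every $n$, since $q_n\le\prod_k(a_k+1)$ and $\sum_{a\ge1}(a+1)^{-1}$ diverges. So that sentence does not parse, and it only gives $\widehat{s}_n>\tfrac12$ for each $n$, not a uniform bound. The clean route to the strict inequality is to work at $\rho=\tfrac12+\delta$: from $q_n\le\prod(a_k+1)$ one gets
\[
\sum_{a_1,\dots,a_n}q_n^{-(1+2\delta)}\ \ge\ \bigl(\zeta(1+2\delta)-1\bigr)^{n},
\]
and since $\zeta(1+2\delta)-1\to\infty$ as $\delta\to0$ while $\tau(i)^{(1+2\delta)\alpha/(1-\alpha)}$ remains bounded (for fixed $\alpha<1$), one may choose $\delta>0$ so that the full weighted sum exceeds $1$ for every $n$; this yields $\widehat{s}_n\ge\tfrac12+\delta$ uniformly in $n$, hence $s(\alpha,\tau(i))>\tfrac12$. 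The continuity in (2) and the limit interchanges in (3) do require the uniform-convergence estimates you correctly flag as the ``main obstacle''; since you already plan to defer those to \cite{WW,G,M1}, your write-up would in the end amount to reproving what the paper deliberately quotes.
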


From a point of view of dynamical system, $s(\alpha,{\tau(i)})$ can be regarded as the solution to the pressure function \cite{WWX}
$$P\Big(T,-s\Big(\log |T'|+\frac{\alpha}{1-\alpha}\log{\tau(i)}\Big)\Big)=0.$$
Furthermore, by Lemma \ref{lem210}, we may extend  $s(\alpha,{\tau(i)})$   to $[0,1]$ as follows$\colon$
\begin{equation}\label{equ2.7}
s(\alpha,{\tau(i)})=\left\{
      \begin{array}{ll}
        1, &  \ \ \ \alpha=0, \\
        s(\alpha,{\tau(i)}), & \ \ 0<\alpha <1, \\
        \frac{1}{2}, & \ \ \ \alpha=1.
      \end{array}
    \right.
\end{equation}

\section{Proof of Theorem \ref{TZ2}$\colon$ Upper~bound}\label{S_3}
In this section, we devote to estimating the upper bound of $E(\hat{\nu},\nu)$.

We first consider the case $\nu=0$.

\begin{lem}\label{full}
$\nu(x)=0$ for Lebesgue almost all $x\in[0,1).$
\end{lem}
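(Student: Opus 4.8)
The plan is to run a convergence Borel--Cantelli argument, using only the $T$-invariance of the Gauss measure $\mu$ (the exponential mixing of Philipp is not needed for the convergence half). First I would record the trivial lower bound: for every $x$ one has $\nu(x)\ge 0$, since when $\nu=0$ the inequality $|T^{n}(x)-y|<|I_{n}(y)|^{0}=1$ holds for all $n$. Hence it suffices to prove that, for each fixed $\nu>0$, the set of $x$ for which $|T^{n}(x)-y|<|I_{n}(y)|^{\nu}$ holds for infinitely many $n$ is Lebesgue-null; intersecting over $\nu=1/k$, $k\in\mathbb{N}$, then yields $\nu(x)=0$ almost everywhere.

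Next I would fix $\nu>0$ and set $A_{n}^{\nu}=\{x\in[0,1)\colon |T^{n}(x)-y|<|I_{n}(y)|^{\nu}\}=T^{-n}\big(B(y,|I_{n}(y)|^{\nu})\big)$, which is Borel since $T$ is piecewise continuous. Because $y=[i,i,\ldots]$, Lemma \ref{lem21}(3) gives $q_{n}(y)\asymp(\tau(i))^{n}$, and then Lemma \ref{lem22} gives $|I_{n}(y)|\asymp(\tau(i))^{-2n}$; in particular $|I_{n}(y)|^{\nu}$ decays geometrically, so $\sum_{n}|I_{n}(y)|^{\nu}<\infty$. Since $\mu$ is $T$-invariant and comparable to Lebesgue measure $\mathcal{L}$ (the Gauss density lies between $1/(2\log 2)$ and $1/\log 2$), I obtain $\mathcal{L}(A_{n}^{\nu})\le 2\log 2\cdot\mu(A_{n}^{\nu})=2\log 2\cdot\mu\big(B(y,|I_{n}(y)|^{\nu})\big)\le \tfrac{4}{\log 2}\,|I_{n}(y)|^{\nu}$. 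Summing and invoking the (convergence) Borel--Cantelli lemma gives $\mathcal{L}\big(\limsup_{n}A_{n}^{\nu}\big)=0$.

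Finally I would assemble the pieces: the set $N=\bigcup_{k\ge 1}\limsup_{n}A_{n}^{1/k}$ is a countable union of null sets, hence $\mathcal{L}(N)=0$. If $x\notin N$, then for every $k$ the inequality $|T^{n}(x)-y|<|I_{n}(y)|^{1/k}$ holds for only finitely many $n$; since $|I_{n}(y)|<1$, the same holds for $|I_{n}(y)|^{\nu}$ whenever $\nu\ge 1/k$, so $\nu(x)\le 1/k$ for all $k$, i.e.\ $\nu(x)=0$. I do not expect a genuine obstacle here: the argument is routine, and the only points needing a word of care are the measurability of $A_{n}^{\nu}$ (immediate) and the geometric decay of $|I_{n}(y)|$, which is exactly Lemma \ref{lem21}(3) specialised to the periodic point $y$. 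As an alternative one could simply quote Philipp's metric theory \cite{P67}, but the invariance of $\mu$ already suffices.
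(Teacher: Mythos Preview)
Your argument is correct and follows essentially the same route as the paper: both show $\sum_{n}|I_{n}(y)|^{1/m}<\infty$, deduce that $\{x:|T^{n}(x)-y|<|I_{n}(y)|^{1/m}\ \text{i.o.}\}$ is null, and then take the union over $m\in\mathbb{N}$. The only difference is cosmetic: the paper appeals to Theorem~2B of Philipp~\cite{P67} for the null-set conclusion, whereas you derive it directly from the $T$-invariance of the Gauss measure and its comparability with Lebesgue measure together with the convergence Borel--Cantelli lemma (your constant $4/\log 2$ should simply be $4$, but this is immaterial).
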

\begin{proof}
Since $\sum_{n=1}^{\infty}|I_n(y)|^{\frac{1}{m}}<\infty$, we obtain by   Theorem 2B in \cite{P67}   that the set
$$\Big\{x\in[0,1)\colon |T^{n}(x)-y|<|I_{n}(y)|^{\frac{1}{m}} \text{ for infinitely many }n\in \mathbb{N}\Big\}$$ is of   measure zero. Now
\begin{align*}
\{x\in[0,1)\colon \nu(x)>0\}&\subseteq\bigcup_{m=1}^{\infty}\Big\{x\in[0,1)\colon \nu(x)>\frac{1}{m}\Big\}
                              \\&\subseteq\bigcup_{m=1}^{\infty}\Big\{x\in[0,1)\colon |T^{n}(x)-y|<|I_{n}(y)|^{\frac{1}{m}} \text{ for infinitely many }n\in \mathbb{N}\Big\}.
\end{align*}
Hence $\{x\in[0,1)\colon \nu(x)>0\}$ is a null set.
This completes the proof.
\end{proof}

We now aim to determine the upper bound of $\dim_{H}E(\hat{\nu},\nu)$ for $0<\nu\le+\infty.$

\begin{lem}\label{lem31}
Let $x\in E(\hat{\nu},\nu)$, where $v>0$. If the continued fraction expansion of $x$ is not periodic,  there exist two ascending sequences $\{n_k\}_{k=1}^{\infty}$ and $\{m_k\}_{k=1}^{\infty}$ depending on $x$ such that$\colon$

{\rm{(1)}} $n_k<m_k<n_{k+1}<m_{k+1}$ for $k\ge1$;

{\rm{(2)}} $a_{n_k+1}(x)=\cdots=a_{m_k}(x)=i$ for $k\ge1;$

{\rm{(3)}} $\displaystyle\liminf_{k\to\infty}\frac{m_k-n_k}{n_{k+1}}=\hat{\nu},$ $\displaystyle\limsup_{k\to\infty}\frac{m_k-n_k}{n_k}=\nu.$
\end{lem}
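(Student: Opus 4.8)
The plan is to translate the two exponents into combinatorial quantities attached to the blocks of consecutive partial quotients equal to $i$ in the expansion of $x$, and then to read off $\{n_k\}$ and $\{m_k\}$ from the record-holding blocks. Fix $x\in E(\hat\nu,\nu)$ with $\nu>0$ and with continued fraction expansion not (eventually) periodic, and for $n\ge 0$ let $\ell(n)$ be the length of the maximal block of $i$'s beginning at the $(n+1)$-st place, i.e. the largest $\ell\ge 0$ with $a_{n+1}(x)=\cdots=a_{n+\ell}(x)=i$; non-periodicity forces $\ell(n)<\infty$ and $T^{n}(x)\neq y$ for all $n$. The key step is the two-sided comparison
$$c_i^{-1}\,\big|I_{\ell(n)}(y)\big|\ \le\ |T^{n}(x)-y|\ \le\ c_i\,\big|I_{\ell(n)}(y)\big|\qquad(n\ge 0),$$
for a constant $c_i>1$ depending only on $i$. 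The upper bound is clear since $T^{n}(x)$ and $y$ both lie in $I_{\ell(n)}(i,\dots,i)=I_{\ell(n)}(y)$. For the lower bound, $T^{n}(x)\notin I_{\ell(n)+1}(y)$ whereas $y$ is interior to $I_{\ell(n)+1}(y)$, so it suffices to show $\mathrm{dist}\big(y,\partial I_m(y)\big)\asymp|I_m(y)|$ (with $i$-dependent constants) for all $m$; this follows by writing $y=g(y)$ with $g(s)=[i,\dots,i,i+s]$ ($m-1$ entries $i$), observing that $g$ maps $[0,1]$ onto $I_m(y)$ with endpoints $[i,\dots,i]$ and $[i,\dots,i,i+1]$ and $|g'|\asymp q_m(y)^{-2}\asymp|I_m(y)|$ on $[0,1]$ by Lemmas \ref{lem21} and \ref{lem22}, and using that $y\in(0,1)$ is fixed by $T$. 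This is precisely where the special arithmetic of $y=[\overline{i}]$ enters.

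Combining the comparison with $-\log|I_m(y)|=2m\log\tau(i)+O(1)$ (Lemmas \ref{lem21}(3), \ref{lem22}) and unwinding the definitions of $\nu(x)$ and $\hat\nu(x)$ gives
$$\nu(x)=\limsup_{n\to\infty}\frac{\ell(n)}{n},\qquad \hat\nu(x)=\liminf_{N\to\infty}\frac{L(N)}{N},\quad L(N):=\max_{1\le n\le N}\ell(n).$$
Now enumerate the maximal blocks of $i$'s in $a_1(x),a_2(x),\dots$ as $[\tilde n_j+1,\tilde m_j]$, $j\ge1$, with $\tilde n_1<\tilde m_1\le\tilde n_2<\tilde m_2\le\cdots$, $a_{\tilde n_j}(x)\neq i$ (or $\tilde n_j=0$), $a_{\tilde m_j+1}(x)\neq i$, and set $\tilde\ell_j=\tilde m_j-\tilde n_j$; there are infinitely many of them (otherwise $\ell(n)=0$ for large $n$ and $\nu(x)=0$). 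Because $\ell(n)=\tilde m_j-n$ for $\tilde n_j\le n<\tilde m_j$ and $\ell(n)=0$ for all other $n$, one checks directly that $\nu(x)=\limsup_{j\to\infty}\tilde\ell_j/\tilde n_j$ and $L(N)=\max\{\tilde\ell_j:\tilde n_j\le N\}$.

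Finally, select the record blocks: let $j_1<j_2<\cdots$ be the indices $j$ with $\tilde\ell_j>\tilde\ell_{j'}$ for every $j'<j$ (infinitely many, since $\nu>0$ makes $\tilde\ell_j$ unbounded), and put $n_k=\tilde n_{j_k}$, $m_k=\tilde m_{j_k}$. Conditions (1) and (2) are immediate. For (3): the running maximum $\max_{j'\le j}\tilde\ell_{j'}$ equals $\tilde\ell_{j_k}$ for the largest record index $j_k\le j$, so $\tilde\ell_j/\tilde n_j\le\tilde\ell_{j_k}/\tilde n_{j_k}$ and therefore $\limsup_k(m_k-n_k)/n_k=\limsup_k\tilde\ell_{j_k}/\tilde n_{j_k}=\limsup_j\tilde\ell_j/\tilde n_j=\nu$; and since $L(N)=\tilde\ell_{j_k}$ for $\tilde n_{j_k}\le N<\tilde n_{j_{k+1}}$, the ratio $L(N)/N$ attains its infimum over this range near $N=\tilde n_{j_{k+1}}$, which yields $\liminf_k(m_k-n_k)/n_{k+1}=\liminf_k\tilde\ell_{j_k}/\tilde n_{j_{k+1}}=\liminf_N L(N)/N=\hat\nu$.

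The main difficulty is the cylinder comparison in the first paragraph; granting it, the reformulation and the record-block bookkeeping are routine. The same scheme, with the single symbol $i$ replaced throughout by the periodic block $[a_{k_0+1},\dots,a_{k_0+h}]$, proves the lemma for an arbitrary quadratic irrational $y$, in accordance with the first remark after Theorem \ref{TZ2}.
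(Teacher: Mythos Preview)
Your proof is correct and follows essentially the same approach as the paper: enumerate the maximal blocks of $i$'s in the expansion of $x$, pass to the record-holding subsequence, and verify the two limits via the two-sided cylinder comparison $|T^n(x)-y|\asymp|I_{\ell(n)}(y)|$. Your intermediate reformulation through $\ell(n)$ and $L(N)=\max_{n\le N}\ell(n)$ organizes the bookkeeping more transparently than the paper's direct $\varepsilon$-arguments, but the underlying construction and logic are identical.
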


\begin{proof}
For $x=[a_{1}(x),a_{2}(x),\ldots]\in E(\hat{\nu},\nu)$, we define two sequences $\{n_{k}'\}_{k\geq1}$ and $\{m_{k}'\}_{k\geq 1}$  as follows$\colon$
\begin{align*}
  m_{0}'=0,~ n_k'&=\min\{n\ge m_{k-1}'\colon a_{n+1}(x)=i\},\\
   m_k'&=\max\{n\ge n_k'\colon a_{n_k'+1}=\cdots=a_{n}(x)=i\}.
\end{align*}
The fact that $\nu(x)>0$ guarantees the existence of $n_k'$, and thus $m_k'$ is well defined since
the  continued fraction expansion of $x$ is not periodic.
Further, for all $k\ge 1,$ we have that $n_k'\le m_k'<n_{k+1}'$, and
  $$\frac{1}{2(i+2)^{2}}|I_{m_k'-n_k'}(y)|\le|T^{n_k'}(x)-y|<|I_{m_k'-n_k'}(y)|,$$
where the first inequality holds by Lemma \ref{lem23}.

We also have $\limsup_{k\to\infty}(m_{k}'-n_{k}')= +\infty$ since $\nu(x)>0$. We then choose a subsequence of $\{(n_k', m_k')\}_{k\geq 1}$ as follows:  put $(n_{1},m_{1})=(n'_{1},m'_{1});$
having choosen $(n_{k}, m_k)=(n_{j_{k}}', m_{j_{k}}')$, we set
$j_{k+1}=\min\big\{j>j_k\colon m_{j}'-n_{j}'> m_{k}-n_{k}\big\},$
and put $(n_{k+1},m_{k+1})=(n_{j_{k+1}}', m_{j_{k+1}}')$. 
 We claim that
$$ \liminf_{k\to\infty}\frac{m_k-n_k}{n_{k+1}}=\hat{\nu}(x),\ \ \limsup_{k\to\infty}\frac{m_k-n_k}{n_k}=\nu(x).$$
To prove the first assertion, we write
$\liminf_{n\to\infty}\frac{m_k-n_k}{n_{k+1}}=a.$
For  $\varepsilon>0,$ there is a subsequence $\{k_j\}_{j=1}^{\infty}$ such that
$$m_{k_j}-n_{k_{j}}\le (a+\varepsilon)n_{{k_j}+1}.$$
Putting $N=n_{k_j}-1$, we have for all $n\in [1,N]$ that
$$|T^{n}(x)-y|\ge\frac{1}{2(i+2)^{2}}|I_{m_{k_j}-n_{k_j}}(y)|>|I_{n_{k_j+1}}(y)|^{a+2\varepsilon}>|I_{N}(y)|^{a+3\varepsilon},$$
where the second inequality holds by the fact $\lim_{n\to\infty}\frac{-\log|I_n(y)|}{2n}=\log\tau(i).$
We deduce that $\hat{\nu}(x)\le a+3\varepsilon$ by the definition of $\hat{\nu}(x)$.

On the other hand,  when $k\gg1$, we have
 $$m_{k}-n_{k}\ge (a-\varepsilon)n_{k+1}.$$
   For  $n_k\le N<n_{k+1}$,
$$|T^{n_{k}}(x)-y|\le|I_{m_k-n_k}(y)|<|I_{n_{k+1}}(y)|^{a-\varepsilon}<|I_{N}(y)|^{a-\varepsilon}.$$
From here we deduce that   $\hat{\nu}(x)\ge a-\varepsilon$. 

Letting $\varepsilon\to 0$ we complete the proof of the first assertion; the second one can be proved in a similar way.
\end{proof}

\begin{lem}\label{lem32}
If $0<\frac{\nu}{1+\nu}<\hat{\nu}\le\infty,$ $E(\hat{\nu},\nu)$ is at most countable, and $\dim_{H}E(\hat{\nu},\nu)=0.$
\end{lem}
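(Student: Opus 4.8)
The goal is to show that the constraint $\nu/(1+\nu) < \hat\nu$ forces $E(\hat\nu,\nu)$ to contain only points with periodic continued fraction expansions, hence to be countable. The plan is to argue by contraposition using Lemma \ref{lem31}: suppose $x \in E(\hat\nu,\nu)$ has a non-periodic expansion, and derive a contradiction from the two limit relations in part (3) of that lemma, namely $\liminf_k (m_k-n_k)/n_{k+1} = \hat\nu$ and $\limsup_k (m_k-n_k)/n_k = \nu$, together with the block structure $a_{n_k+1} = \cdots = a_{m_k} = i$ and the ordering $n_k < m_k < n_{k+1} < m_{k+1}$.

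First I would record the elementary inequality linking consecutive indices. Since $m_k < n_{k+1}$, we have $n_{k+1} \ge m_k + 1 > m_k$, and combining with $m_k - n_k \le \nu \cdot n_k$ (up to an $o(n_k)$ error coming from the $\limsup$) gives $m_k \le (1+\nu + o(1)) n_k$, hence $n_{k+1} \ge m_k$ is not by itself enough — the key point is the reverse direction. From $m_k - n_k \le (1+\nu)n_k - n_k = \nu n_k$ one gets $n_k \ge (m_k - n_k)/\nu$, and since $n_{k+1} > m_k \ge m_k - n_k$, we must instead extract the bound on $n_{k+1}$ from above. The cleaner route: for the subsequence $\{k_j\}$ realizing the $\limsup$, $m_{k_j} - n_{k_j} \ge (\nu - \varepsilon) n_{k_j}$, so $m_{k_j} \ge (1 + \nu - \varepsilon) n_{k_j}$; and for \emph{every} $k$, $n_{k+1} > m_k$ but also, crucially, $n_{k+1}$ cannot be much larger than $m_k$ relative to $m_k - n_k$ in a way that keeps the $\liminf$ at $\hat\nu$. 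I would make this precise by observing that along $\{k_j\}$,
\[
\frac{m_{k_j} - n_{k_j}}{n_{k_j+1}} > \frac{m_{k_j}-n_{k_j}}{n_{k_j+1}} \quad\text{with}\quad n_{k_j+1} \le \text{(something comparable to } m_{k_j}\text{)},
\]
and the honest bound is $n_{k_j+1} \ge m_{k_j} + 1$, giving $\frac{m_{k_j}-n_{k_j}}{n_{k_j+1}} \le \frac{m_{k_j}-n_{k_j}}{m_{k_j}} = \frac{(m_{k_j}-n_{k_j})/n_{k_j}}{m_{k_j}/n_{k_j}} \to \frac{\nu}{1+\nu}$ along a further subsequence. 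Therefore $\liminf_k (m_k-n_k)/n_{k+1} \le \nu/(1+\nu) < \hat\nu$, contradicting part (3) of Lemma \ref{lem31}.

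Thus no $x \in E(\hat\nu,\nu)$ can have a non-periodic expansion, so $E(\hat\nu,\nu)$ is contained in the set of quadratic irrationals (plus possibly rationals, which are excluded since partial quotients are defined only for irrationals), which is countable; a countable set has Hausdorff dimension $0$, giving $\dim_H E(\hat\nu,\nu) = 0$. I would also handle the endpoint $\hat\nu = \infty$ separately but identically: the $\liminf$ being $\le \nu/(1+\nu) < \infty$ already contradicts $\hat\nu = \infty$. The main obstacle I anticipate is purely bookkeeping: extracting a single subsequence along which \emph{both} the $\limsup$-realizing behavior of $(m_k-n_k)/n_k$ and the upper bound $n_{k+1} \ge m_k + 1$ combine correctly, and making sure the $\varepsilon$'s are quantified in the right order so that the final inequality $\nu/(1+\nu) \ge \hat\nu$ (the negation of the hypothesis) genuinely follows. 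One must be slightly careful that the $\limsup$ subsequence $\{k_j\}$ need not make $(m_{k_j}-n_{k_j})/n_{k_j+1}$ small on its own — rather one passes to a further subsequence of $\{k_j\}$ along which $m_{k_j}/n_{k_j}$ converges, which is legitimate since that ratio is bounded (it tends to $1+\nu$ when $\nu < \infty$, and is eventually arbitrarily large when $\nu = \infty$, forcing $n_{k_j+1}/(m_{k_j}-n_{k_j}) \to $ something $\le 1$, still giving the contradiction). Once that subsequence is fixed the computation is a two-line estimate.
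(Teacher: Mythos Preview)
Your proposal is correct and follows essentially the same approach as the paper. The paper's version is slightly more streamlined: it rewrites $\limsup_k (m_k-n_k)/n_k = \nu$ as $\limsup_k (m_k-n_k)/m_k = \nu/(1+\nu)$ and then, using $m_k < n_{k+1}$ for all $k$, concludes directly that $\hat\nu = \liminf_k (m_k-n_k)/n_{k+1} \le \limsup_k (m_k-n_k)/m_k = \nu/(1+\nu)$, so no subsequence extraction or $\varepsilon$-bookkeeping is needed.
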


\begin{proof} If $x\in E(\hat{\nu},\nu)$, and its continued fraction expansion is not periodic, then
by Lemma \ref{lem31}(2),  there exist two sequences $\{n_k\}_{k=1}^{\infty}$ and $\{m_k\}_{k=1}^{\infty}$ depending on $x$ such that
$$\liminf_{k\to\infty}\frac{m_k-n_k}{n_{k+1}}=\hat{\nu},\ \ \limsup_{k\to\infty}\frac{m_k-n_k}{m_k}=\frac{\nu}{1+\nu}.$$
This yields $\hat{\nu}\le\frac{\nu}{1+\nu}$; the lemma follows.
\end{proof}

We devote to constructing a covering of $E(\hat{\nu},\nu)$ in  the case where $0\le\hat{\nu}\le\frac{\nu}{1+\nu}<\infty$ and $0<\nu\le\infty.$ Since  $E(0,\nu)$ is a subset of $\{x\in[0,1)\colon \nu(x)=\nu\},$ by Corollary \ref{Cor}, we have $\dim_{H}E(0,\nu)\le s\Big(\frac{\nu}{1+\nu},{\tau(i)}\Big)$, which is the desired upper bound estimate. Hence, we only need to deal with the case $0<\hat{\nu}\le\frac{\nu}{1+\nu}<\nu\le\infty.$
Whence, given any $x$  in the set $E(\hat{\nu},\nu)$ with non-periodic continued fraction expansion, we associate $x$ with two sequences $\{n_k\}, \{m_k\}$ as in Lemma \ref{lem31}.
The following properties hold:

(1) the sequence $\{m_k\}$ grows exponentially, more precisely,
there exists  $C>0$, independent of $x$, such that when $k$ is large enough,
\begin{equation}\label{e30}
k \leq C\log{m_{k}}.
\end{equation}

Indeed, we have that $m_{k}-n_{k}\ge(\hat{\nu}/2)n_{k+1}$
for all large $k$, and  thus
$$m_{k}\ge (1+\frac{\hat{\nu}}2)n_{k}
\ge (1+\frac{\hat{\nu}}2)m_{k-1}.$$

\medskip

(2) Write $\xi=\frac{\nu^{2}}{(1+\nu)(\nu-\hat{\nu})}$. For any  $\varepsilon>0$, there exist infinitely many $k$ such that
 \begin{equation}\label{e36}
 \sum_{i=1}^{k}(m_{i}-n_{i})\ge   m_{k}(\xi-\varepsilon). \end{equation}

To prove this, we apply a general form of the Stolz-Ces\`aro theorem which states that: if $b_n$
tends to infinity monotonically,
$$ \liminf_n\frac{a_n-a_{n-1}}{b_n-b_{n-1}}\le \liminf_n\frac{a_n}{b_n}\le \limsup_n\frac{a_n}{b_n} \le \limsup_n\frac{a_n-a_{n-1}}{b_n-b_{n-1}}.$$

  We deduce from   Lemma \ref{lem31}  that
\begin{equation*}
\limsup_{k\to\infty}\frac{m_{k}}{n_{k}}=1+\nu
\end{equation*}
and
\begin{equation*}
\liminf_{k\to\infty}\frac{m_{k}}{n_{k+1}}
\ge \liminf_{k\to\infty}\frac{m_{k}-n_{k}}{n_{k+1}}
\cdot\liminf_{k\to\infty}\frac{m_{k}}{m_{k}-n_{k}}
=\frac{\hat{\nu}(1+\nu)}{\nu}.
\end{equation*}
Hence
\begin{equation}\label{e34}
\begin{split}
  \liminf_k\frac{\sum_{i=1}^{k}(m_i-n_i)}{m_{k+1}}&\ge
  \liminf_k\frac{m_k-n_k}{m_{k+1}-m_k}\\&\ge
  \liminf_k\frac{m_k-n_k}{n_{k+1}}\cdot\frac{1}{\limsup_k\frac{m_{k+1}}{n_{k+1}}-\liminf_k\frac{m_k}{n_{k+1}}}\\&\ge
    \frac{\hat{\nu}\nu}{\big(\nu-\hat{\nu}\big)\big(1+\nu\big)},
\end{split}
\end{equation}
and thus
\begin{align*}
\sum_{i=1}^{k}(m_{i}-n_{i})&
\geq \Big(\frac{\hat{\nu}\nu}{\big(\nu-\hat{\nu}\big)\big(1+\nu\big)}-\frac{\varepsilon}{2}\Big)m_{k}+(m_{k}-n_{k})
\end{align*}
holds for $k$ large enough. On the other hand, there exist infinitely many $k$ such that
$$m_k-n_k\ge \Big(\frac{\nu}{1+\nu}-\frac{\varepsilon}{2}\Big)m_k.$$
 We then readily check that (\ref{e36}) holds for such $k$.

\medskip

We now construct a covering of $E(\hat{\nu},\nu)$. To this end,
we collect all   sequences  $(\{n_{k}\}, \{m_{k}\})$  associated with some $x\in E(\hat{\nu},\nu)$ as in Lemma \ref{lem31} to form a set
\begin{align*}
\Omega
= \Big\{(\{n_{k}\},\{m_{k}\})\  \colon \text{Conditions (1) \& (3) in Lemma \ref{lem31} are fulfilled} \Big\}.
\end{align*}
 For $(\{n_{k}\},\{m_{k}\})\in \Omega$, write
\begin{align*}
&H(\{n_{k}\},\{m_{k}\})=\{x\in[0,1)\colon  \text{Condition (2) in Lemma \ref{lem31} is fulfilled} \},\\
&\Lambda_{k, m_k}=\Big\{(n_1,m_1; \ldots; n_{k-1},m_{k-1};n_{k})\colon n_{1}<m_{1}<\cdots<m_{k-1}<n_{k}<m_k, (\ref{e36})\text{ holds} \Big\},\\
&\mathcal{D}_{n_1,m_1;\ldots;n_{k}, m_{k}}=\Big\{(\sigma_{1},\ldots,\sigma_{m_{k}})\in \mathbb{N}^{m_{k}}\colon \sigma_{n_{j}+1}=\cdots=\sigma_{m_{j}}=i \text{ for all }1\leq j\leq k \Big\}.
\end{align*}

Based on the previous analysis, we obtain a covering of $E(\hat{\nu},\nu),$ i.e.,
\begin{equation*}
\begin{split}
E(\hat{\nu},\nu) &\subseteq \bigcup_{(\{n_{k}\},\{m_{k}\})\in \Omega}H(\{n_{k}\},\{m_{k}\}) \\&
   \subseteq\bigcap_{K=1}^{\infty} \bigcup_{k=K}^{\infty}
            \bigcup_{m_{k}\ge e^{\frac{k}{C}}}
            \bigcup_{(n_{1},m_{1},\ldots,m_{k-1},n_{k})\in\Lambda_{k, m_k}}
            \bigcup_{(a_{1},\ldots,a_{m_{k}})\in \mathcal{D}_{n_1,m_1;\ldots;n_{k}, m_{k}}}I_{m_{k}}(a_{1},\ldots,a_{m_{k}}).
\end{split}
\end{equation*}
For $\varepsilon>0$, putting $t=s(\xi-\varepsilon,{\tau(i)})+\frac{\varepsilon}{2}$, we have that $t>s(\xi,{\tau(i)})$ and $t>\frac{1}{2}$ by Lemma \ref{lem210}(2) and (\ref{equ2.7}).
We are now in a position to  estimate $\mathcal{H}^{t+\frac{\varepsilon}{2}}\big(E(\hat{\nu},\nu)\big)$, the $(t+\frac{\varepsilon}{2})$-dimensional Hausdorff measure of $E(\hat{\nu},\nu)$.

\begin{lem}\label{lem35}
For $\varepsilon>0$, we have $\mathcal{H}^{t+\frac{\varepsilon}{2}}\big(E(\hat{\nu},\nu)\big)<+\infty.$
\end{lem}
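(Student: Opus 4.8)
The plan is to bound the $(t+\tfrac{\varepsilon}{2})$-dimensional Hausdorff measure directly from the explicit covering of $E(\hat{\nu},\nu)$ displayed above, using the pre-dimensional number machinery from Section 2.2. Fix $\varepsilon>0$ and recall $t=s(\xi-\varepsilon,\tau(i))+\tfrac{\varepsilon}{2}$, so $t>\tfrac12$ by Lemma \ref{lem210}(1), and $t>s_{n}(\xi-\varepsilon,\tau(i))$ for all large $n$ by Lemma \ref{lem29}. Since the covering is a $\liminf$ of unions of basic intervals $I_{m_k}$ whose diameters tend to $0$ as $k\to\infty$ (because $m_k\ge e^{k/C}\to\infty$ and $|I_{m_k}|\le q_{m_k}^{-2}\le 2^{-(m_k-1)}$ by Lemmas \ref{lem22} and \ref{lem21}(1)), it suffices to show that for each fixed large $K$,
\[
\sum_{k=K}^{\infty}\ \sum_{m_k\ge e^{k/C}}\ \sum_{(n_1,m_1,\ldots,n_k)\in\Lambda_{k,m_k}}\ \sum_{(a_1,\ldots,a_{m_k})\in\mathcal{D}_{n_1,m_1;\ldots;n_k,m_k}}\ |I_{m_k}(a_1,\ldots,a_{m_k})|^{\,t+\frac{\varepsilon}{2}}
\]
is finite, and in fact summable in $K$ — actually it will tend to $0$, which is more than enough.

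The core estimate is the innermost sum over the free digits. By Lemma \ref{lem22}, $|I_{m_k}(a_1,\ldots,a_{m_k})|^{t+\varepsilon/2}\le q_{m_k}(a_1,\ldots,a_{m_k})^{-2(t+\varepsilon/2)}$, and on $\mathcal{D}_{n_1,m_1;\ldots;n_k,m_k}$ the digits in each block $(n_j,m_j]$ are forced to equal $i$, so exactly $m_k-\sum_{j=1}^k(m_j-n_j)$ digits are free. Writing $\ell=m_k-\sum_{j=1}^k(m_j-n_j)$ and repeatedly applying the quasi-multiplicativity of $q$ (Lemma \ref{lem21}(2)) together with Lemma \ref{lem21}(3) to the forced $i$-blocks, one gets $q_{m_k}\ge c^{k}\,q_{\ell}(\text{free digits})\cdot\prod_{j}(\tau(i))^{m_j-n_j}$ for an absolute constant $c$, hence the inner sum is dominated by a constant depending on $k$ times $(\tau(i))^{-2(t+\varepsilon/2)\sum_j(m_j-n_j)}\sum_{\text{free}}q_{\ell}^{-2(t+\varepsilon/2)}$. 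Using the defining property of the pre-dimensional number $s_\ell(\xi-\varepsilon,\tau(i))$ (Remark \ref{rem2}) in the form $\sum q_{\ell}(a_1,\ldots)^{-2 s}(\tau(i))^{-\frac{2(\xi-\varepsilon)\ell}{1-(\xi-\varepsilon)}\cdot s}\le 1$ for $s=s_\ell\le t$, and the inequality (\ref{e36}) which forces $\sum_j(m_j-n_j)\ge (\xi-\varepsilon)m_k$, one balances the exponents so that the whole inner sum over $(a_1,\ldots,a_{m_k})$ is at most $(\tau(i))^{-\varepsilon' m_k}$ for some $\varepsilon'>0$ (here one must check that the gap between $t$ and $s_\ell$, of size $\varepsilon/2$, dominates the loss $\tfrac{2(\xi-\varepsilon)}{1-(\xi-\varepsilon)}$ versus $2\sum_j(m_j-n_j)/\ell$ coming from (\ref{e36}); this is exactly where $\xi=\tfrac{\nu^2}{(1+\nu)(\nu-\hat\nu)}$ enters, since $\tfrac{\xi-\varepsilon}{1-(\xi-\varepsilon)}$ is comparable to the ratio of block-length to free-length). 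The exponential decay $(\tau(i))^{-\varepsilon' m_k}$ then absorbs the purely combinatorial count: the number of choices of $(n_1,m_1,\ldots,n_k)$ is at most $\binom{m_k}{2k}\le m_k^{2k}$, and since $m_k\ge e^{k/C}$ gives $k\le C\log m_k$, we have $m_k^{2k}\le e^{2C\log^2 m_k}\ll (\tau(i))^{\varepsilon' m_k/2}$; summing over $m_k\ge e^{k/C}$ and then over $k\ge K$ gives a tail that goes to $0$ as $K\to\infty$.

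The main obstacle is the exponent bookkeeping in the innermost sum: one has to verify that replacing the "ideal" normalization $(\tau(i))^{\frac{(\xi-\varepsilon)\ell}{1-(\xi-\varepsilon)}}$ implicit in $s_\ell(\xi-\varepsilon,\tau(i))$ by the actual forced product $\prod_j(\tau(i))^{m_j-n_j}$ (which by (\ref{e36}) has total exponent $\ge(\xi-\varepsilon)m_k=(\xi-\varepsilon)(\ell+\sum_j(m_j-n_j))$) genuinely produces a positive power of $\tau(i)^{-1}$ in $m_k$ rather than just $\ell$, after accounting for the raise of the Hausdorff exponent from $s_\ell$ to $t=s_\ell+\tfrac{\varepsilon}{2}$ applied to all $\ell$ free digits and the factor $2(t+\varepsilon/2)$ on the $\tau(i)$-powers; the bounded multiplicative constants $c^{\pm k}$ from Lemma \ref{lem21}(2) are harmless since $k=O(\log m_k)$. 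Once this single inequality is pinned down, the rest is routine: the combinatorial factor is subexponential in $m_k$, the constraint $m_k\ge e^{k/C}$ makes the $k$-sum converge, and the $s_\ell\to s(\xi-\varepsilon,\tau(i))$ convergence (Lemma \ref{lem29}) is used only to legitimize $s_\ell\le t$ for large $\ell$, which holds because $\ell\ge (1-\xi)m_k\to\infty$ uniformly in the configurations we sum over.
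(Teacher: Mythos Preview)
Your plan is correct and matches the paper's strategy: bound $|I_{m_k}|$ by $q_{m_k}^{-2}$, split $q_{m_k}$ via quasi-multiplicativity into free digits and forced $i$-blocks, invoke a pre-dimensional number to control the free sum, and then absorb the combinatorial count $m_k^{O(k)}$ using $k\le C\log m_k$. The paper's execution differs in one simplifying step: rather than separating out $\tau(i)$-powers and appealing to $\widehat{s}_\ell$, it rearranges all the forced $i$-blocks to the tail (at the cost of a factor $4^{k(2t+\varepsilon)}$ from Lemma~\ref{lem21}(2)) and compares directly with $s_{m_k}(\xi-\varepsilon,\tau(i))$, whose very definition already packages free digits followed by an $i$-tail of the right length; the monotonicity ``fewer free digits $\Rightarrow$ smaller sum'' together with (\ref{e36}) then yields the bound in one line, sidestepping the exponent bookkeeping you flag as the main obstacle. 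One small slip in your write-up: the claim $\ell\ge(1-\xi)m_k$ does not follow from (\ref{e36}), which only gives the \emph{upper} bound $\ell\le(1-\xi+\varepsilon)m_k$; this is harmless (for small $\ell$ the free sum is trivially bounded while the $\tau(i)$-factor is already exponentially small in $m_k$), but it is another reason to prefer the paper's route through $s_{m_k}$, where the relevant index $m_k\to\infty$ unconditionally.
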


\begin{proof}
By Lemma \ref{lem29}, there exists   $K\in \mathbb{N}$ such that for all $k\geq K,$
\begin{equation}\label{e35}
s_{m_{k}}(\xi-\varepsilon,{\tau(i)})\leq t,
\end{equation}
\begin{equation}\label{e37}
(4^{2t+\varepsilon}k)^{2C\log k}<2^{\frac{k-1}{4}\varepsilon}.
\end{equation}

 Writing $\psi(m_{k})=m_{k}-\sum_{i=1}^{k}(m_{i}-n_{i})$ when $m_{k}\geq K$, we have that
\begin{equation*}
\begin{split}
   &\sum\limits_{(a_{1},\ldots,a_{m_{k}})\in {\mathcal{D}_{n_1,m_1;\ldots;n_{k}, m_{k}}}}
                               |I_{m_{k}}(a_{1},\ldots,a_{m_{k}})|^{t+\frac{\varepsilon}{2}}\\
\le&\sum\limits _{(a_{1},\ldots,a_{m_{k}})\in {\mathcal{D}_{n_1,m_1;\ldots;n_{k}, m_{k}}}}
 \Big(\frac{1}{q_{m_{k}}(a_{1},\ldots,a_{m_{k}})}\Big)^{2t+\varepsilon}\\
\leq&\sum\limits _{a_{1},\ldots,a_{\psi(m_{k})}\in \mathbb{N}}2^{k(2t+\varepsilon)}\Big(\frac{1}{q_{\psi(m_{k})}(a_{1},\ldots,
                               a_{\psi(m_{k})})q_{m_{1}-n_{1}}(i,\ldots,i)\cdots q_{m_{k}-n_{k}}(i,\ldots,i)}\Big)^{2t+\varepsilon}\\
\leq&\sum\limits _{a_{1},\ldots,a_{\psi(m_{k})}\in \mathbb{N}}4^{k(2t+\varepsilon)}\Big(\frac{1}{q_{m_{k}}(a_{1},\ldots,a_{\psi(m_{k})},i,\ldots,i)}\Big)^{2t+\varepsilon}\\
\leq&\sum\limits _{a_{1},\ldots,a_{m_{k}-\lfloor m_{k}(\xi-\varepsilon)\rfloor}\in \mathbb{N}} 4^{k(2t+\varepsilon)}
                               \Big(\frac{1}{q_{m_{k}}(a_{1},\ldots,a_{m_{k}-\lfloor m_{k}(\xi-\varepsilon)\rfloor},i,\ldots,i)}\Big)^{2s_{m_{k}}(\xi-\varepsilon,{\tau(i)})+\varepsilon}\\
\leq&4^{k(2t+\varepsilon)}\Big(\frac{1}{2}\Big)^{\frac{m_{k}-1}{2}\varepsilon},
\end{split}
\end{equation*}
where the first two inequalities hold by Lemmas \ref{lem22} and \ref{lem21};
the penultimate one follows by (\ref{e36}) and (\ref{e35}); the last one follows by Remark \ref{rem2} and Lemma \ref{lem21}(1).

Therefore,
\begin{equation*}
\begin{split}
&\mathcal{H}^{t+\frac{\varepsilon}{2}}\big(E(\hat{\nu},\nu)\big)\\
   \leq  &\liminf_{K\to\infty}\sum\limits_{k=K}^{\infty}\sum_{m_{k}= e^{\frac{k}{C}}}^{\infty}
                      \sum_{(n_{1},m_{1},\ldots,m_{k-1},n_{k})\in\Lambda_{k, m_k}}
                      \sum\limits _{(a_{1},\ldots,a_{m_{k}})\in \mathcal{D}_{n_1,m_1;\ldots;n_{k}, m_{k}} }|I_{m_{k}}(a_{1},\ldots,a_{m_{k}})|^{t+\frac{\varepsilon}{2}}\\
\leq&\liminf_{K\to\infty}\sum\limits_{k=K}^{\infty}
                       \sum\limits_{m_{k}=e^{\frac{k}{C}}}^{\infty}
                       \sum_{n_{k}=1}^{m_{k}}\sum_{m_{k-1}=1}^{n_{k}}\cdots\sum\limits_{m_{1}=1}^{n_{2}}
                       \sum\limits_{n_{1}=1}^{m_{1}}4^{k(2t+\varepsilon)}\Big(\frac{1}{2}\Big)^{\frac{m_{k}-1}{2}\varepsilon}\\
\leq&\liminf_{K\to\infty}\sum\limits_{k=K}^{\infty}
                         \sum\limits_{m_{k}=e^{\frac{k}{C}}}^{\infty}(4^{2t+\varepsilon}m_{k})^{2C\log m_{k}}\Big(\frac{1}{2}\Big)^{\frac{m_{k}-1}{2}\varepsilon}\\
\leq&\liminf_{K\to\infty}\sum\limits_{k=K}^{\infty} \sum\limits_{m_{k}=e^{\frac{k}{C}}}^{\infty}\Big(\frac{1}{2}\Big)^{\frac{m_{k}-1}{4}\varepsilon}
                         \leq \frac{1}{1-(\frac{1}{2})^{\frac{\varepsilon}{4}}}
                         \sum\limits_{k=1}^{\infty} \Big(\frac{1}{2^{\varepsilon}}\Big)^{\frac{e^{\frac{k}{C}}-1}{4}} < +\infty,
\end{split}
\end{equation*}
where the third and fourth inequalities follow from (\ref{e30}) and  (\ref{e37}) respectively.
\end{proof}

By Lemma \ref{lem35}, we obtain the desired inequality $\dim_{H}E(\hat{\nu},\nu)\le s(\xi,{\tau(i)})$ by letting $\varepsilon\to 0$.

\section{Proof of Theorem \ref{TZ2}$\colon$ Lower~bound}\label{S_4}
In this section we establish  the lower bound of $\dim_{H}E(\hat{\nu},\nu).$ Since $E(0,0)$ is of full Lebesgue measure and $\dim_{H}E(\hat{\nu},\nu)=0$ for $\hat{\nu}>\frac{\nu}{1+\nu},$ we need only consider  the cases   $0\le\hat{\nu}\le\frac{\nu}{1+\nu}<\nu<\infty$ or $\nu=\infty.$

Let us start by treating the case  $0\le\hat{\nu}\le\frac{\nu}{1+\nu}<\nu<\infty$. We claim that there exist  two   sequences of natural numbers   $\{n_{k}\}_{k=1}^{\infty}$ and $\{m_{k}\}_{k=1}^{\infty}$
 satisfying the following conditions$\colon$

{\rm{(1)}} $n_{k}< m_{k}< n_{k+1}$ and $(m_{k}-n_{k})\leq(m_{k+1}-n_{k+1})$   for $k\geq 1$;

{\rm{(2)}} $\lim_{k\to\infty}\frac{m_{k}-n_{k}}{n_{k+1}}=\hat{\nu}$;

{\rm{(3)}} $\lim_{k\to\infty}\frac{m_{k}-n_{k}}{n_{k}}=\nu$.

\noindent Indeed, when $\hat{\nu}>0$, we may take
$$n_{1}=2,~ n_{k+1}=\left\lfloor \frac{\nu}{\hat{\nu}}
\Big(n_{k}+\frac{1}{\nu}\Big)\right\rfloor+2,~ m_{k}=\left\lfloor (1+\nu)n_{k}\right\rfloor+1;$$
when $\hat{\nu}=0$, we may take
$$n_{k}=\left\lfloor(1+\nu)2^{2^{2{k}}}\right\rfloor+2,~ m_{k}=\left\lfloor(1+\nu)n_{k}\right\rfloor+1.$$

From now on, we fix two such sequences  $\{n_{k}\}, \{m_{k}\}$; for any $B\ge i+1$, we define
\begin{equation*}
E(B)=\big\{x\in[0,1)\colon 1\leq a_{n}(x)\leq B, a_{n_{k}+1}(x)=\cdots=a_{m_{k}}(x)=i,
                                                           n\geq1 \text{ and } k\geq1\big\}.
\end{equation*}

The lower bound estimate of $\dim_H E(\hat\nu,\nu)$ will be established in the following way:
we  provide a lower bound of $\dim_{H}E(B)$;
build an injective mapping $f$ from $E(B)$ to $ E(\hat\nu,\nu)$ and prove that $f$ is dimension-preserving.

\subsection{Lower bound of $\dim_{H}E(B)$}\label{S_{4.1}}
Before proceeding, we cite an analogous definition of the pre-dimensional numbers.
Let $l_{k}=m_{k}-m_{k-1}$ for $k\geq 1$ ($m_{0}=0$ by convention). Let
$$\widetilde{f}_{k}(s,{\tau(i)})
                      =\sum\limits_{1\leq a_{m_{k-1}+1},\ldots,a_{n_{k}}\leq B}
                            \Big(\frac{1}{q_{l_{k}}(a_{m_{k-1}+1},\ldots,a_{n_{k}}, i,\ldots,i)}\Big)^{2s}.$$
We define $\widetilde{s}_{l_{k}}(\mathcal{A}_{B},\xi,\tau(i))$ to be the solution of the equation $\widetilde{f}_{k}(s,\tau(i))=1$.

\begin{lem}\label{lem41}
The limit $\lim_{k\to\infty}\widetilde{s}_{l_{k}}(\mathcal{A}_{B},\xi,{\tau(i)})$ exists, and is  equal to $s(\mathcal{A}_{B},\xi,{\tau(i)}).$
\end{lem}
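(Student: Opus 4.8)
$\newcommand{\R}{\mathbb{R}}$The plan is to mimic the proof of Lemma \ref{lem29}, replacing the blocks of constant length $\lfloor n\alpha\rfloor$ there by the variable blocks of length $m_k-n_k$ and the plain word length $n$ there by $l_k=m_k-m_{k-1}$. The key point is that the defining sum for $\widetilde{s}_{l_k}(\mathcal{A}_B,\xi,\tau(i))$ runs over $n_k-m_{k-1}$ free digits, followed by $m_k-n_k$ forced digits equal to $i$, and that the ratio $(m_k-n_k)/(n_k-m_{k-1})$ tends to a limit governed by conditions (2) and (3) on $\{n_k\},\{m_k\}$. So first I would verify, using $\lim_k (m_k-n_k)/n_{k+1}=\hat\nu$, $\lim_k (m_k-n_k)/n_k=\nu$ and the fact that $m_{k-1}\le n_k$, that
\begin{equation*}
\lim_{k\to\infty}\frac{m_k-n_k}{n_k-m_{k-1}}=\frac{\xi}{1-\xi},\qquad \xi=\frac{\nu^2}{(1+\nu)(\nu-\hat\nu)},
\end{equation*}
equivalently $\lim_k (n_k-m_{k-1})/l_k = 1-\xi$ and $\lim_k (m_k-n_k)/l_k=\xi$; this is the bookkeeping that replaces the trivial identity $(n-\lfloor n\alpha\rfloor)+\lfloor n\alpha\rfloor=n$ used in Lemma \ref{lem29}.

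Next I would run the two-sided squeeze exactly as in Lemma \ref{lem29}. For the lower direction: from $\widetilde f_k(\widetilde s_{l_k},\tau(i))=1$ (equality, since $\mathcal{A}_B$ is finite), apply Lemma \ref{lem21}(2) to split $q_{l_k}(a_{m_{k-1}+1},\ldots,a_{n_k},i,\ldots,i)$ as a product of $q_{n_k-m_{k-1}}(a_{m_{k-1}+1},\ldots,a_{n_k})$ and $q_{m_k-n_k}(i,\ldots,i)$ up to a factor $2$, then bound $q_{m_k-n_k}(i,\ldots,i)$ above and below by constant multiples of $(\tau(i))^{m_k-n_k}$ via Lemma \ref{lem21}(3), replace the exponent $m_k-n_k$ by $\frac{\xi}{1-\xi}(n_k-m_{k-1})$ at the cost of an error absorbed into a small $\varepsilon$ by the limit just established, and use $q_{n_k-m_{k-1}}\ge 2^{(n_k-m_{k-1}-1)/2}$ (Lemma \ref{lem21}(1)) together with $2^{(n_k-m_{k-1})\varepsilon/2}$ being eventually large to pass from exponent $2\widetilde s_{l_k}$ to $2\widetilde s_{l_k}+\varepsilon$. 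This yields, for large $k$,
\begin{equation*}
\sum_{1\le a_{m_{k-1}+1},\ldots,a_{n_k}\le B}\Big(\frac{1}{q_{n_k-m_{k-1}}(a_{m_{k-1}+1},\ldots,a_{n_k})\,(\tau(i))^{\frac{\xi}{1-\xi}(n_k-m_{k-1})}}\Big)^{2\widetilde s_{l_k}+\varepsilon}\le 1,
\end{equation*}
hence $\widehat s_{n_k-m_{k-1}}(\mathcal{A}_B,\xi,\tau(i))\le \widetilde s_{l_k}(\mathcal{A}_B,\xi,\tau(i))+\varepsilon/2$. For the upper direction, start from the analogous inequality (Remark \ref{rem2}) with exponent $2\widehat s_{n_k-m_{k-1}}(\mathcal{A}_B,\xi,\tau(i))$, and reverse the estimates: bound $(\tau(i))^{\frac{\xi}{1-\xi}(n_k-m_{k-1})}$ above by a constant times $q_{m_k-n_k}(i,\ldots,i)$ using Lemma \ref{lem21}(3) and the limit, recombine via Lemma \ref{lem21}(2), and absorb the constants into a further $\varepsilon$, getting $\widetilde s_{l_k}(\mathcal{A}_B,\xi,\tau(i))\le \widehat s_{n_k-m_{k-1}}(\mathcal{A}_B,\xi,\tau(i))+\varepsilon/2$. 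Combining the two bounds with Lemma \ref{lem26} (which gives $\widehat s_{n_k-m_{k-1}}(\mathcal{A}_B,\xi,\tau(i))\to s(\mathcal{A}_B,\xi,\tau(i))$ since $n_k-m_{k-1}\to\infty$) yields $|\widetilde s_{l_k}(\mathcal{A}_B,\xi,\tau(i))-s(\mathcal{A}_B,\xi,\tau(i))|<\varepsilon$ for $k$ large, which is the claim.

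The main obstacle I anticipate is controlling the error incurred by replacing the variable exponent $m_k-n_k$ with $\frac{\xi}{1-\xi}(n_k-m_{k-1})$: in Lemma \ref{lem29} the two quantities differ by at most an absolute constant ($O(1/(1-\alpha))$), whereas here their difference is $o(n_k-m_{k-1})$ but not bounded, so one must be a little careful that a multiplicative factor $(\tau(i))^{o(n_k-m_{k-1})}$ is genuinely absorbed by the term $q_{n_k-m_{k-1}}^{\varepsilon}\ge 2^{\varepsilon(n_k-m_{k-1}-1)/2}$ — this works precisely because $n_k-m_{k-1}\to\infty$, which is itself a consequence of $\lim_k(n_k-m_{k-1})/l_k=1-\xi>0$ together with $l_k\to\infty$. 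A secondary technical point is that one needs $\xi\in(0,1)$ for this to make sense; this holds under the standing hypothesis $0\le\hat\nu\le\frac{\nu}{1+\nu}<\nu<\infty$ (with $\xi=0$, i.e. $\hat\nu=0$, handled by the separate choice of $\{n_k\},\{m_k\}$ given before the lemma, where one checks $n_k-m_{k-1}\to\infty$ directly and $s(\mathcal{A}_B,0,\tau(i))$ is interpreted via \eqref{equ2.7}). Everything else is the same estimates as in Lemma \ref{lem29}, applied blockwise.
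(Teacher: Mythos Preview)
Your argument is correct and follows the template of Lemma~\ref{lem29} closely: you split $q_{l_k}$ via Lemma~\ref{lem21}(2)--(3), pass to the pre-dimensional numbers $\widehat{s}_{n_k-m_{k-1}}(\mathcal{A}_B,\xi,\tau(i))$ at word length $n_k-m_{k-1}$ and \emph{fixed} parameter $\xi$, and conclude by Lemma~\ref{lem26}. The paper takes a different route: rather than redoing the Lemma~\ref{lem29} estimates with variable block lengths, it invokes Lemma~\ref{lem29} as a black box at the \emph{perturbed} parameters $\xi\pm\varepsilon$. Concretely, from $(m_k-n_k)/l_k\to\xi$ one has $\lfloor l_k(\xi-\varepsilon)\rfloor\le m_k-n_k\le\lfloor l_k(\xi+\varepsilon)\rfloor$ for large $k$; because $i\in\mathcal{A}_B$, the defining sum for $\widetilde{s}_{l_k}$ (with $n_k-m_{k-1}$ free digits) is sandwiched between the sums defining $s_{l_k}(\mathcal{A}_B,\xi\pm\varepsilon,\tau(i))$ (with $l_k-\lfloor l_k(\xi\pm\varepsilon)\rfloor$ free digits) simply by monotonicity in the number of free positions. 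This traps $\widetilde{s}_{l_k}$ between $s(\mathcal{A}_B,\xi+\varepsilon,\tau(i))-\varepsilon$ and $s(\mathcal{A}_B,\xi-\varepsilon,\tau(i))+\varepsilon$, and continuity of $s(\mathcal{A}_B,\cdot,\tau(i))$ in the parameter finishes. Your approach is more self-contained (no appeal to continuity in $\xi$), while the paper's avoids reopening the splitting estimates and absorbs the $o(l_k)$ discrepancy in block length into a shift of the parameter rather than into an $\varepsilon$ in the exponent.

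One minor correction to your closing remarks: the case $\hat\nu=0$ does not give $\xi=0$ but $\xi=\nu/(1+\nu)>0$, so there is nothing special there. The genuine boundary is $\hat\nu=\nu/(1+\nu)$, where $\xi=1$ and (with the explicit sequences chosen in the paper) $n_k-m_{k-1}$ stays bounded; in that regime neither your comparison with $\widehat{s}_{n_k-m_{k-1}}$ nor the paper's comparison with $s_{l_k}(\mathcal{A}_B,\xi\pm\varepsilon,\tau(i))$ is available as stated, and the value $s(\mathcal{A}_B,1,\tau(i))=\tfrac12$ has to be obtained by a separate limiting argument.
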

\begin{proof}
 By   Lemma \ref{lem29} and the fact  $l_{k}\to\infty$ as $k\rightarrow\infty$ (cf. Condition (1)), we deduce that, for any $\varepsilon>0,$
when $k\gg 1,$
\begin{equation}\label{e43}
    |s_{l_{k}}(\mathcal{A}_{B},\xi+\varepsilon,\tau(i))-s(\mathcal{A}_{B},\xi+\varepsilon,\tau(i))|<\frac{\varepsilon}{2}.
\end{equation}
Further, from Conditions (2) and (3) we have that
\begin{align*}
  \lim_{k\to\infty}\frac{m_{k}-n_{k}}{l_{k}}
   =\lim_{k\to\infty}\frac{\frac{m_k-n_k}{m_k}\cdot\frac{m_k}{n_k}}{\frac{m_k}{n_k}-\frac{m_{k-1}-n_{k-1}}{n_k}\cdot\frac{m_{k-1}}{m_{k-1}-n_{k-1}}}
   =\xi,
\end{align*}
and thus for $k\gg 1$,
\begin{equation}\label{e42}
\lfloor l_{k}(\xi-\varepsilon)\rfloor
\leq m_{k}-n_{k}\leq \lfloor l_{k}(\xi+\varepsilon)\rfloor.
\end{equation}
Hence, by (\ref{e43}) and   (\ref{e42}), we obtain that
\begin{align*}
    &\sum\limits_{1\leq a_{1},\ldots,a_{n_{k}-m_{k-1}}\leq B} \Big(\frac{1}{q_{l_{k}}(a_{1},\ldots,a_{n_{k}-m_{k-1}},
                                                            i,\ldots,i)}\Big)^{2\big(s(\mathcal{A}_{B},
                                                            \xi+\varepsilon,\tau(i))-\varepsilon\big)} \\
\geq&\sum\limits_{1\leq a_{1},\ldots,a_{n_{k}-m_{k-1}}\leq B} \Big(\frac{1}{q_{l_{k}}(a_{1},\ldots,a_{n_{k}-m_{k-1}},
                                                                i,\ldots,i)}\Big)^{2s_{l_{k}}(\mathcal{A}_{B},
                                                                 \xi+\varepsilon,\tau(i))-\varepsilon}\\
\geq&\sum\limits_{1\leq a_{1},\ldots,a_{l_{k}-\lfloor l_{k}(\xi+\epsilon)\rfloor}\leq B}
                                                               \Big(\frac{1}{q_{l_{k}}(a_{1},\ldots,a_{l_{k}-\lfloor l_{k}(\xi+\varepsilon)\rfloor},
                                                                 i,\ldots,i)}\Big)^{2s_{l_{k}}(\mathcal{A}_{B},
                                                                  \xi+\varepsilon,{\tau(i)})-\varepsilon}\\
\geq&\Big(\frac{1}{q_{l_{k}}(B,\ldots,B)}\Big)^{-\varepsilon}\geq\tau(B)^{l_{k}\varepsilon}\geq 1
\end{align*}
and
\begin{align*}
    &\sum\limits_{1\leq a_{1},\ldots,a_{n_{k}-m_{k-1}}\leq B} \Big(\frac{1}{q_{l_{k}}(a_{1},\ldots,a_{n_{k}-m_{k-1}},
                                                              i,\ldots,i)}\Big)^{2\big(s(\mathcal{A}_{B},
                                                              \xi-\varepsilon,\tau(i))+\varepsilon\big)} \\
\leq&\sum\limits_{1\leq a_{1},\ldots,a_{l_{k}-\lfloor l_{k}(\xi-\varepsilon)\rfloor}\leq B}
                                                                \Big(\frac{1}{q_{l_{k}}(a_{1},\ldots,a_{l_{k}-\lfloor l_{k}(\xi-\varepsilon)\rfloor},
                                                                 i,\ldots,i)}\Big)^{2s_{l_{k}}(\mathcal{A}_{B},
                                                                  \xi-\varepsilon,\tau(i))+\varepsilon}\\
\leq&\Big(\frac{1}{q_{l_{k}}(1,\ldots,1)}\Big)^{\varepsilon}<1.
\end{align*}
By the monotonicity of $\widetilde{f}_{k}(s,{\tau(i)})$ with respect to $s$, we have
$$s(\mathcal{A}_{B},\xi+\varepsilon,\tau(i))-\varepsilon
\le\widetilde{s}_{l_{k}}(\mathcal{A}_{B},\xi,{\tau(i)})
\le s(\mathcal{A}_{B},\xi-\varepsilon,\tau(i))+\varepsilon,$$
which  completes the proof.
\end{proof}

\subsubsection{\textbf{Supporting measure.}}
We define a probability measure $\mu$  on $E(B)$ by distributing mass among the basic intervals.   We introduce the symbolic space to code these basic intervals: write $\mathcal{A}_{B}=\{1,\ldots,B\}$;  for $n\geq 1,$ set
\begin{equation*}
\mathcal{B}_{n}=\big\{(\sigma_{1},\ldots,\sigma_{n})\in \mathcal{A}_{B}^{n}\colon\sigma_{j}=i \text{ for } n_{k}<j\leq m_{k} \text{ with some } k\geq 1\big\}.
\end{equation*}


\noindent Step I$\colon$ For $(a_{1},\ldots,a_{m_{1}})\in \mathcal{B}_{m_{1}}$, we define
$$\mu\big(I_{m_{1}}(a_{1},\ldots,a_{m_{1}})\big)=\Big(\frac{1}{q_{l_{1}}(a_{1},\ldots,a_{m_{1}})}\Big)^{2\widetilde{s}_{l_{1}}(\mathcal{A}_{B},\xi,{\tau(i)})}$$
and for $1\leq n<m_{1}$, set
$$\mu\big(I_{n}(a_{1},\ldots,a_{n})\big)=\sum\limits_{a_{n+1},\ldots,a_{m_{1}}}\mu\big(I_{m_{1}}(a_{1},\ldots,a_{n},a_{n+1},\ldots,a_{m_{1}})\big),$$
where the summation is taken over all $(a_{n+1},\ldots,a_{m_{1}})$ with $(a_{1},\ldots,a_{m_{1}})\in \mathcal{B}_{m_{1}}$.

\noindent Step II$\colon$ Assuming that $\mu\big(I_{m_{k}}(a_{1},\ldots,a_{m_{k}})\big)$ is defined for some $k\geq 1$, we define
$$\mu\big(I_{m_{k+1}}(a_{1},\ldots,a_{m_{k+1}})\big)=\mu\big(I_{m_{k}}(a_{1},\ldots,a_{m_{k}})\big)\cdot \Big(\frac{1}{q_{l_{k+1}}(a_{m_{k}+1},\ldots,a_{m_{k+1}})}\Big)^{2\widetilde{s}_{l_{k+1}}(\mathcal{A}_{B},\xi,{\tau(i)})}$$
and for $m_{k}<n<m_{k+1}$, set
$$\mu\big(I_{n}(a_{1},\ldots,a_{n})\big)=\sum\limits_{a_{n+1},\ldots,a_{m_{k+1}}} \mu\big(I_{m_{k+1}}(a_{1},\ldots,a_{n},a_{n+1},\ldots, a_{m_{k+1}})\big).$$
Likewise,  the last summation is taken under the restriction  that $(a_{1},\ldots,a_{m_{k+1}})\in \mathcal{B}_{m_{k+1}}.$

\noindent Step III$\colon$ We have distributed the measure among basic intervals. By the definition of $\widetilde{s}_{l_{k}}(\mathcal{A}_{B},\xi,{\tau(i)})$, we readily check the consistency: for $n\geq 1$ and $(a_{1},\ldots,a_{n})\in \mathcal{B}_{n}$,
$$\mu\big(I_{n}(a_{1},\ldots,a_{n})\big)=\sum\limits_{a_{n+1}}\mu\big(I_{n+1}(a_{1},\ldots,a_{n},a_{n+1})\big).$$
We then extend the measure  to all Borel sets by Kolmogorov extension theorem. The extension measure is also denoted by $\mu$.



\smallskip

From the construction, we know that $\mu$ is supported on $E(B)$ and
$$\mu\big(I_{m_{k}}(a_{1},\ldots,a_{m_{k}})\big)
=\prod\limits_{j=1}^{k}\Big(\frac{1}{q_{l_{j}}(a_{m_{j-1}+1},\ldots,a_{m_{j}})}\Big)^{2\widetilde{s}_{l_{j}}(\mathcal{A}_{B},\xi,{\tau(i)})},~~\sum\limits_{a_{1}\in \mathcal{B}_{1}}\mu\big(I_{1}(a_{1})\big)=1.$$

\subsubsection{\textbf{H\"{o}lder exponent of $\mu$.}} We  shall start with the  study of  a basic interval.

For $0<\varepsilon<{s(\mathcal{A}_{B},\xi,{\tau(i)})}/{4}$,
 by Lemmas \ref{lem29}, \ref{lem41} and the fact that $m_k$ grows exponentially,  we can find  $K\in \mathbb{N}$ such that for any $k, j\geq K,$
\begin{equation}\label{e441}
|\widetilde{s}_{l_{k}}(\mathcal{A}_{B},\xi,{\tau(i)})-s(\mathcal{A}_{B},\xi,{\tau(i)})|<\varepsilon,
\end{equation}
\begin{equation}\label{e442}
|\widetilde{s}_{l_{k}}(\mathcal{A}_{B},\xi,{\tau(i)})-s_j(\mathcal{A}_{B},\xi,{\tau(i)})|<\frac{\varepsilon\log 2}{2\log (B+1)}:=\varepsilon',
\end{equation}
and \begin{equation}\label{e443}
\max\big\{(B+1)^K,2^k\big\}\le \frac14\big(q_{m_k}(a_1,\ldots,a_{m_k})\big)^\varepsilon.
\end{equation}

%

\begin{lem}\label{lem42}
 Let $n\ge m_K$. For  $(a_{1}, \ldots, a_{n})\in \mathcal{B}_{n}$, we have
$$\mu\big(I_{n}(a_{1},\ldots,a_{n})\big)\leq C_0\cdot |I_{n}(a_1,\ldots,a_n)|^{s(\mathcal{A}_{B},\xi,{\tau(i)})-3\varepsilon},$$
where $C_0=(B+1)^{2(l_1+\cdots+l_{K-1})}.$
\end{lem}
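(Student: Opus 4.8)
The plan is to estimate $\mu\big(I_n(a_1,\ldots,a_n)\big)$ by splitting the index $n$ into a completed part (a full block ending at some $m_k$) and a partial part (the indices between $m_k$ and $n$). First I would fix $k\ge K$ such that $m_k\le n<m_{k+1}$, and write $n=m_k+r$ with $0\le r<l_{k+1}$. By the construction of $\mu$ in Step II, summing over the free symbols $a_{n+1},\ldots,a_{m_{k+1}}$ gives
$$\mu\big(I_n(a_1,\ldots,a_n)\big)=\mu\big(I_{m_k}(a_1,\ldots,a_{m_k})\big)\cdot\!\!\sum_{a_{n+1},\ldots,a_{m_{k+1}}}\!\!\Big(\frac{1}{q_{l_{k+1}}(a_{m_k+1},\ldots,a_{m_{k+1}})}\Big)^{2\widetilde s_{l_{k+1}}(\mathcal A_B,\xi,\tau(i))},$$
where the sum ranges over admissible continuations (those with $\sigma_j=i$ on the block $(n_{k+1},m_{k+1}]$). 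Using Lemma \ref{lem21}(2) to split $q_{l_{k+1}}$ as (up to a factor $2$) $q_{r-\text{(block length so far)}}\cdot q_{\text{rest}}$, and then the near-equality in Remark \ref{rem2} together with (\ref{e442}) to compare $\widetilde s_{l_{k+1}}$ with the appropriate $s_j(\mathcal A_B,\xi,\tau(i))$, the inner sum is bounded by $C\cdot q_{r}(a_{m_k+1},\ldots,a_n)^{-2s(\mathcal A_B,\xi,\tau(i))+2\varepsilon}$ for a controlled constant; the factor $(B+1)^{2(l_1+\cdots+l_{K-1})}=C_0$ absorbs the contribution of the first $K-1$ incomplete blocks where no dimension estimate is available.

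Next I would handle the completed part $\mu\big(I_{m_k}(a_1,\ldots,a_{m_k})\big)=\prod_{j=1}^k q_{l_j}(a_{m_{j-1}+1},\ldots,a_{m_j})^{-2\widetilde s_{l_j}}$. For $j\ge K$ I replace $\widetilde s_{l_j}$ by $s(\mathcal A_B,\xi,\tau(i))-\varepsilon$ using (\ref{e441}); for $j<K$ I bound the factor crudely by $1$ (each $q\ge1$) and pay for the missing part through $C_0$. Multiplying the telescoped product of $q_{l_j}$'s and using Lemma \ref{lem21}(2) repeatedly to reassemble $\prod_{j=1}^k q_{l_j}\asymp 2^{-k}q_{m_k}$, combined with the partial-block bound above, yields
$$\mu\big(I_n(a_1,\ldots,a_n)\big)\le C_0\cdot 2^{2k}\cdot q_n(a_1,\ldots,a_n)^{-2s(\mathcal A_B,\xi,\tau(i))+2\varepsilon}.$$
Finally I absorb the $2^{2k}$ term: by (\ref{e443}), $2^k\le\frac14 q_{m_k}^\varepsilon\le q_n^\varepsilon$, so $2^{2k}\le q_n^{2\varepsilon}$, giving $\mu\big(I_n\big)\le C_0\, q_n^{-2s(\mathcal A_B,\xi,\tau(i))+4\varepsilon}$, and since $|I_n|\le q_n^{-2}$ by Lemma \ref{lem22} and $s-3\varepsilon>0$ (as $\varepsilon<s/4$), this is at most $C_0|I_n|^{s(\mathcal A_B,\xi,\tau(i))-3\varepsilon}$ after renaming $\varepsilon$; one has to keep track that the comparison between the $\widetilde s_{l_j}$, the $s_j$, and the limit $s(\mathcal A_B,\xi,\tau(i))$ only costs $O(\varepsilon)$ in the exponent, which is exactly the role of (\ref{e441})–(\ref{e442}).

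The main obstacle I anticipate is the bookkeeping in the partial block $m_k<n<m_{k+1}$: the inner sum is over continuations of a word that already contains a forced sub-block of $i$'s (the part of $(n_{k+1},m_k+r]$ lying inside $(n_{k+1},m_{k+1}]$ if $n>n_{k+1}$), so the exponent $\widetilde s_{l_{k+1}}$ attached to the whole block $l_{k+1}$ must be matched against the pre-dimensional number $s_j$ for the correct truncation length $j$, and the estimate $\frac{\alpha}{1-\alpha}(\cdot)\ge\lfloor\cdot\rfloor$ type inequalities underpinning Lemma \ref{lem29} must be invoked with $\alpha=\xi$. Getting the direction of every inequality right there — and checking $\xi<1$ so that $s(\mathcal A_B,\xi,\tau(i))$ is in the valid range of Lemmas \ref{lem26}–\ref{lem210} — is the delicate part; everything else is a routine application of Lemma \ref{lem21}(1)–(3) and Lemma \ref{lem22}.
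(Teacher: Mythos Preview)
Your plan is correct and essentially the paper's own: split off the full product up to $m_k$, bound it via (\ref{e441}) and (\ref{e443}) with $C_0$ absorbing the first $K-1$ blocks, and control the partial block $m_k<n<m_{k+1}$ by the normalization $\sum q_{l_{k+1}}^{-2\widetilde s_{l_{k+1}}}=1$ together with (\ref{e442}). The one place where you are working harder than necessary is the ``obstacle'' you flag for $n>n_{k+1}$: in that range every remaining digit $a_{n+1},\ldots,a_{m_{k+1}}$ is forced equal to $i$, so the sum over continuations collapses to a single term and $\mu(I_n)=\mu(I_{m_{k+1}})$, reducing immediately to the already-handled case $n=m_{k+1}$; no matching of $\widetilde s_{l_{k+1}}$ against a truncated $s_j$ is needed there. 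The paper also splits the genuinely nontrivial range $m_k<n<n_{k+1}$ into the two subcases $n-m_k<K$ and $n-m_k\ge K$, using the crude bound $(B+1)^{-2K}$ (absorbed by (\ref{e443})) in the first and your (\ref{e442})-based argument only in the second.
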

\begin{proof}
To shorten notation, we will write $\widetilde{s}_{l_{k}}, s_k$ and $s$ instead of $\widetilde{s}_{l_{k}}(\mathcal{A}_{B},\xi,{\tau(i)}), s_k(\mathcal{A}_{B},\xi,{\tau(i)})$ and $s(\mathcal{A}_{B},\xi,{\tau(i)})$, respectively. Fixing  $(a_{1}, \ldots, a_{n})\in \mathcal{B}_{n}$, we also write $I_n$ for $I_n(a_1,\ldots,a_n)$,
$q_n$ for $q_n(a_1,\ldots,a_n)$ and $q_{l_j}$ for ${q_{l_{j}}(a_{m_{j-1}+1},\ldots,a_{m_{j}})}$ when no confusion can arise. The proof falls naturally into three parts according to the range of $n$.
\smallskip

\underline{\textsc{Case 1$\colon$} $n=m_{k}$ for $k\ge K$}

By Lemmas \ref{lem22} and \ref{lem23}, (\ref{e441}), (\ref{e443}) and the fact $q_{l_j}\le(B+1)^{l_j}$, we obtain
\begin{align*}
\mu(I_{m_{k}})&=\prod\limits_{j=1}^{k} q_{l_j}^{-2\widetilde{s}_{l_j}}=\prod\limits_{j=1}^{K}q_{l_j}^{-2\widetilde{s}_{l_j}}\cdot\prod\limits_{j=K+1}^{k}q_{l_j}^{-2\widetilde{s}_{l_j}}\le C_0\prod\limits_{j=1}^{K}q_{l_j}^{-2(s-\varepsilon)}\cdot\prod\limits_{j=K+1}^{k}q_{l_j}^{-2(s-\varepsilon)}\\
                               &\leq C_{0}2^{2(k-1)}(q_{m_k})^{-2(s-\varepsilon)}\le \frac{C_{0}}4(q_{m_k})^{-2(s-2\varepsilon)}\leq  C_{0}|I_{m_{k}}|^{s-2\varepsilon}.
\end{align*}


\smallskip

\underline{\textsc{Case 2$\colon$} $m_{k}<n< n_{k+1}$  for $k\ge K$}

In this case, we have
$$\mu(I_n)=\sum_{a_{n+1},\ldots,a_{m_{k+1}}}\mu(I_{m_{k+1}})=\sum\prod_{j=1}^{k+1}(q_{l_j})^{-2\widetilde{s}_{l_{j}}}= \prod_{j=1}^{k}(q_{l_j})^{-2\widetilde{s}_{l_{j}}}\cdot \sum (q_{l_{k+1}})^{-2\widetilde{s}_{l_{k+1}}}.$$
We have already seen in \textsc{Case 1} that $\prod_{j=1}^{k}(q_{l_j})^{-2\widetilde{s}_{l_{j}}}\le C_{0}2^{2(k-1)}(q_{m_k})^{-2(s-\varepsilon)}$. And
$$\sum (q_{l_{k+1}})^{-2\widetilde{s}_{l_{k+1}}}\le
 \big({q_{n-m_{k}}(a_{m_{k}+1},\ldots,a_{n})}\big)^{-2(s-\varepsilon)} \cdot
                               \sum \big({q_{m_{k+1}-n}(a_{n+1},\ldots,a_{n_{k+1}},i,\ldots,i)}\big)^{-2\widetilde{s}_{l_{k+1}}}.
 $$
 We then obtain that
 $$ \mu(I_n)\le C_02^{2k}(q_n)^{-2(s-\varepsilon)}\cdot  \sum_{a_{n+1},\ldots,a_{m_{k+1}}} \big({q_{m_{k+1}-n}(a_{n+1},\ldots,a_{n_{k+1}},i,\ldots,i)}\big)^{-2\widetilde{s}_{l_{k+1}}}.$$
Now we need an upper estimate of the last sum. By the definition of $\widetilde{s}_{l_{k+1}}$, we have that
$$ \sum\limits_{1\leq a'_{m_{k}+1},\ldots,a'_n,a_{n+1},\ldots,a_{n_{k+1}}\leq B}
        \big(q_{l_{k+1}}( a'_{m_{k}+1},\ldots,a'_n,a_{n+1},\ldots,a_{n_{k+1}},i,\ldots,i)\big)^{-2\widetilde{s}_{l_{k+1}}}=1.$$
This yields  that
$$\sum
          \big( q_{n-m_{k}}(a'_{m_{k}+1},\ldots,a'_{n})\big)^{-2\widetilde{s}_{l_{k+1}}}\cdot
     \sum
           \big(q_{m_{k+1}-n}(a_{n+1},\ldots,a_{n_{k+1}},i,\ldots,i)\big)^{-2\widetilde{s}_{l_{k+1}}}\le 4.$$
We will bound the first sum  from below to reach the desired upper estimate of the second sum. We consider two cases.

{\rm{(1)}} If $n-m_{k}< K$,
$$\sum_{a'_{m_{k}+1},\ldots,a'_{n}}\big(q_{n-m_k}(a'_{m_{k}+1},\ldots,a'_{n})\big)^{-2\widetilde{s}_{l_{k+1}}}
\geq \big(q_{n-m_k}(B,\ldots,B)\big)^{-2}\geq( {B+1})^{-2K}.$$
And thus, by (\ref{e443}), we reach that
$$
  \mu(I_{n})
\leq C_{0}2^{2k+2}(B+1)^{2K} ({q_{n}} )^{-2(s-\varepsilon)}\le\frac{C_{0}}4(q_{m_k})^{-2(s-3\varepsilon)}
\leq C_{0} |I_{n}|^{s-3\varepsilon}.$$

{\rm{(2)}} If $n-m_{k}\geq K$, then, by (\ref{e442}), (\ref{e443}) and  Remark \ref{rem2}, we have
\begin{align*}
&\sum_{a'_{m_{k}+1},\ldots,a'_{n}}\big(q_{n-m_k}(a'_{m_{k}+1},\ldots,a'_{n})\big)^{-2\widetilde{s}_{l_{k+1}}}
\ge \sum_{a'_{m_{k}+1},\ldots,a'_{n}}\big(q_{n-m_k}(a'_{m_{k}+1},\ldots,a'_{n})\big)^{-2{s}_{n-m_k}-\varepsilon'}\\
&\ge\sum_{a'_{m_{k}+1},\ldots,a'_{n-\lfloor(n-m_k)\xi\rfloor}}\Big({q_{n-m_{k}}(a'_{m_{k}+1},\ldots,a'_{n-\lfloor(n-m_k)\xi\rfloor},
                          i,\ldots,i)}\Big)^{-2s_{n-m_{k}}-\varepsilon'}\\
&\ge\Big({q_{n-m_{k}}(B,\ldots,B)}\Big)^{-\varepsilon'}
\ge({B+1})^{-(n-m_{k})\varepsilon'}
\ge({B+1})^{-n\varepsilon'}
\ge2^{\frac{-n\varepsilon}{2}}.
\end{align*}

Therefore,
$$
    \mu(I_{n})
\leq C_{0}2^{2k+2}2^{\frac{n\varepsilon}{2}} ({q_{n}} )^{-2(s-2\varepsilon)}
\leq \frac{C_{0}}2({q_{n}})^{-2(s-3\varepsilon)}
\leq C_{0}|I_{n}|^{s-3\varepsilon}.
$$

\smallskip

\underline{\textsc{Case 3$\colon$} $n_{k+1}\leq n<m_{k+1}$  for $k\ge K$}

In this case,  since $(a_{n+1},\ldots,a_{m_{k+1}})=(i,\ldots,i),$ we have
$$\mu(I_{n}(a_{1},\ldots,a_{n}))=\mu(I_{m_{k+1}}(a_{1},\ldots,a_{m_{k+1}})),$$
then
$$
   \mu(I_{n})
  \leq C_{0}|I_{m_{k+1}}|^{s-2\varepsilon}
  \leq C_{0}|I_{n}|^{s-2\varepsilon}.
$$
These conclude the verification of Lemma.
\end{proof}

Now we study the H\"older exponent for  the measure of a general  ball $B(x,r)$. 

\begin{lem}\label{lem43} For $x\in E(B)$ and  $r>0$ small enough, we have
$$\mu(B(x,r))\leq C_{0}\cdot r^{s(\mathcal{A}_{B},\xi,{\tau(i)})-4\varepsilon}.$$
\end{lem}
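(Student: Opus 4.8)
The plan is to deduce the ball estimate from the basic-interval estimate of Lemma~\ref{lem42} by controlling how a small ball $B(x,r)$ can meet the cylinder structure. First I would fix $x=[a_1,a_2,\ldots]\in E(B)$ and, given small $r>0$, choose the unique integer $n$ with $|I_{n+1}(x)|\le r<|I_n(x)|$, so that $B(x,r)$ is trapped between two consecutive levels of the cylinder filtration at $x$. The point is then to count the basic intervals of order $n$ (or a slightly smaller order) that intersect $B(x,r)$: since $r<|I_n(x)|$, the ball $B(x,r)$ can only meet $I_n(x)$ together with a bounded number of its neighbours at the appropriate level, \emph{provided} the relevant partial quotients are not equal to $1$. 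This is exactly the content of Lemma~\ref{lem24}: depending on whether $a_n(x)=1$, $a_{n-1}(x)=1$, one has $B(x,|I_n(x)|)$ contained in a union of at most $5$ cylinders of order $n$, $n-1$, or $n-2$. Since all digits lie in $\mathcal{A}_B=\{1,\ldots,B\}$, these finitely many neighbouring cylinders also have digits bounded by $B+1$ (a harmless enlargement), and — crucially — one must check that the digits forced to equal $i$ by membership in $\mathcal{B}_n$ are not disturbed, which holds because $i\ge 1$ and the shifts in Lemma~\ref{lem24} only perturb the last one or two digits; one picks the order of the covering cylinders to avoid the runs of $i$'s, or absorbs the finitely many exceptional configurations into the constant.

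Second I would apply Lemma~\ref{lem42} to each of these $O(1)$ covering cylinders. There are two regimes. If the order of the covering cylinder is still $\ge m_K$, Lemma~\ref{lem42} gives $\mu(I)\le C_0|I|^{s-3\varepsilon}$ directly, and since $|I|\le$ const$\cdot r$ (the neighbours of $I_n(x)$ have comparable length, by Lemma~\ref{lem22} and the bounded-distortion estimate $|I_{n}|\asymp q_n^{-2}$ with $q_{n}\le(B+1)q_{n-1}$), summing over the bounded number of them yields $\mu(B(x,r))\le C_0'\, r^{s-3\varepsilon}$. The slight loss from $3\varepsilon$ to $4\varepsilon$ in the statement is the cushion needed to absorb the multiplicative constants ($5$, the distortion factors, powers of $B+1$ from the neighbour enlargement) into the exponent for $r$ small: write const$\cdot r^{s-3\varepsilon}\le r^{s-4\varepsilon}$ once $r$ is below a threshold depending only on $B$ and $\varepsilon$. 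The other regime, where $r$ is so large that the covering order drops below $m_K$, is trivial since then $r$ is bounded below and the inequality holds by adjusting $C_0$ (or is vacuous because "$r$ small enough" excludes it).

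The main obstacle I expect is the bookkeeping around the case $a_n(x)=1$ (and $a_{n-1}(x)=1$) in Lemma~\ref{lem24}: there the natural covering of $B(x,r)$ is by cylinders of order $n-1$ or $n-2$, whose measure under $\mu$ is not automatically $\le C_0|I|^{s-3\varepsilon}$ unless one knows these are still "generic" orders, i.e.\ $\ge m_K$ and compatible with the block structure $\mathcal{B}_\bullet$. One has to argue that lowering the order by one or two either keeps us inside the scope of Lemma~\ref{lem42} — which it does, since that lemma covers \emph{all} $n\ge m_K$ including those in the forced-$i$ ranges (Case 3) and the intermediate ranges (Case 2) — or else that the digit being decremented is not one of the forced $i$'s (if $a_n(x)=1$ and $i\ge 2$, then $a_n(x)=1\ne i$, so $n$ is not inside a run, hence $n-1,n-2$ are safe; if $i=1$ the runs consist of $1$'s and one must instead use the containment in a cylinder of order exactly $n$, going back up, or note the run forces $r$ comparisons that still land on a $\mathcal{B}$-admissible cylinder). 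Handling $i=1$ cleanly is the fiddly point; everything else is a routine combination of Lemmas~\ref{lem22}, \ref{lem24} and~\ref{lem42} together with the exponent cushion.
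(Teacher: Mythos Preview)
Your high-level approach is exactly that of the paper: pick $n$ with $|I_{n+1}(x)|\le r<|I_n(x)|$, localise $B(x,r)$ via Lemma~\ref{lem24}, apply Lemma~\ref{lem42}, and absorb the constants into the extra~$\varepsilon$. The difference is that the paper does not carry out your case analysis at all. In every one of the three cases of Lemma~\ref{lem24} the ball $B(x,|I_n(x)|)$ is contained in the \emph{single} cylinder $I_{n-2}(a_1,\ldots,a_{n-2})$ (cases~(1) and~(2) already sit inside $I_{n-2}$ once you take the trivial inclusion $I_{n-1}\subset I_{n-2}$), and this cylinder is automatically in $\mathcal{B}_{n-2}$ because it is a prefix of $x\in E(B)$. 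So the paper applies Lemma~\ref{lem42} once to $I_{n-2}(x)$, then uses $q_{n+1}\le (B+1)^3 q_{n-2}$ to compare $|I_{n-2}|$ with $|I_{n+1}|\le r$, and the factor $(B+1)^6$ is swallowed by the passage from $3\varepsilon$ to $4\varepsilon$.

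This shortcut dissolves all the obstacles you list. You never need to worry about whether the neighbouring cylinders $I_n(a_1,\ldots,a_n+j)$ lie in $\mathcal{B}_n$ (incidentally, if they do not they have $\mu$-measure zero, since $\mu$ is supported on $E(B)$, so your route also works), and the value of $i$ --- in particular $i=1$ --- plays no role whatsoever. Your ``fiddly point'' is therefore an artefact of not dropping straight to order $n-2$.
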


\begin{proof}
Let $x=[a_{1},a_{2},\ldots]$ be its continued fraction expansion.
Let $n\ge K+2$ be the integer such that
$$|I_{n+1}(a_{1},\ldots,a_{n+1})|\leq r<|I_{n}(a_{1},\ldots,a_{n})|.$$
Therefore it follows from Lemmas \ref{lem24} and \ref{lem42} that
\begin{align*}
\mu(B(x,r))&\leq \mu(I_{n-2}(a_{1},\ldots,a_{n-2}))
              \leq C_{0}\cdot|I_{n-2}(a_{1},\ldots,a_{n-2})|^{s(\mathcal{A}_{B},\xi,{\tau(i)})-3\varepsilon}
          \\& \le C_{0}(B+1)^{6}\cdot |I_{n+1}(a_{1},\ldots,a_{n+1})|^{s(\mathcal{A}_{B},\xi,{\tau(i)})-3\varepsilon}
        \\& \le C_{0} \cdot |I_{n+1}(a_{1},\ldots,a_{n+1})|^{s(\mathcal{A}_{B},\xi,{\tau(i)})-4\varepsilon}  \le C_{0}\cdot r^{s(\mathcal{A}_{B},\xi,{\tau(i)})-4\varepsilon}.
\end{align*}
\end{proof}

Applying mass distribution principle (see Lemma \ref{lem213}), letting $\varepsilon\to 0$, we conclude that
$$\dim_{H}E(B)\geq s(\mathcal{A}_{B},\xi,{\tau(i)}).$$

\subsection{Lower bound of $\dim_{H}E(\hat{\nu},\nu)$}\label{S_{4.2}}
We  build a mapping $f$ from $E(B)$ to $ E(\hat\nu,\nu)$ and prove that $f$ is dimension-preserving.

Fix an integer $d>B$. For  $x=[a_{1},a_{2},\ldots]$ in $E(B)$, we remark that the continued fraction of $x$ is the concatenation of $\mathbb B_0=[a_1,\ldots,a_{n_1}]$ and the blocks
$$\mathbb B_k=[\,\underbrace{ i,\ldots, i}_{m_k-n_k}, a_{m_{k}+1}, \ldots, a_{n_{k+1}}] \quad (k=1,2,\ldots).$$
In  the block $\mathbb B_k$, from the beginning we insert a digit $d$ after each $m_k-n_k$ digits to obtain a new block $\mathbb B_k'$, that is,
$$\mathbb B'_k=[\, d, i,\ldots, i, d, a_{m_{k}+1}, \ldots, a_{m_{k}+(m_k-n_k)}, d,
\ldots,   a_{n_{k+1}}].$$
Concatenating the blocks $\mathbb B_0, \mathbb B'_1, \mathbb B'_2,\ldots$, we get
$[\mathbb B_0, \mathbb B'_1, \mathbb B'_2,\ldots]$, which is a continued fraction expansion of some $\bar{x}$. We then define $f(x)=\bar{x}.$
Let $\mathbf{K}=\{k_n\}\subset \mathbb N$ be the collection of the occurrences of  the digit $d$ in the continued expansion of $\bar x$. It is trivially seen that $\mathbf K$ is independent of the choice of $x\in E(B)$, and, in the notation of Lemma \ref{lem214}, $\phi_\mathbf K(\bar x)=x$ for $x\in E(B)$.


Let $h_k$ be the length of the block $\mathbb B'_k.$ Noting that the number of the inserted digit $d$ is at most $\frac{n_{k+1}-m_{k}}{m_{k}-n_{k}}+1=o(h_k)$ in the block $\mathbb B'_k$, we readily check that $\mathbf K$ is a subset of $\mathbb N$ of density zero. Hence
by Lemma \ref{lem214} we have
$$\dim_{H}f\big(E(B)\big)=\dim_{H}E(B).$$
It remains to prove that  $f\big(E(B)\big)$  is a Cantor subset of $E(\hat{\nu},\nu).$

\begin{lem}\label{lem44}
$f\big(E(B)\big)\subset E(\hat{\nu},\nu).$
\end{lem}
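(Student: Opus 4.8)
The plan is to fix $x=[a_1,a_2,\ldots]\in E(B)$ and verify directly that $\bar x=f(x)$ satisfies $\hat\nu(\bar x)=\hat\nu$ and $\nu(\bar x)=\nu$, by locating, inside the continued fraction expansion of $\bar x$, the long runs of the digit $i$ and relating their positions and lengths to the sequences $\{n_k\},\{m_k\}$ chosen in Section~\ref{S_4}. First I would set up notation for the positions in $\bar x$: let $\bar n_k$ and $\bar m_k$ denote, respectively, the index in the expansion of $\bar x$ just before, and just at the end of, the maximal run of $i$'s produced by the first $m_k-n_k$ digits of the block $\mathbb B_k$ (these are the digits $\underbrace{i,\ldots,i}_{m_k-n_k}$ appearing immediately after the first inserted $d$ of $\mathbb B'_k$). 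Since each inserted digit $d$ satisfies $d>B\ge i+1$, and since within $E(B)$ no digit exceeds $B$, the only maximal runs of $i$ of unbounded length in $\bar x$ are exactly these blocks; any run of $i$'s internal to the ``$a_{m_k+1},\ldots,a_{n_{k+1}}$'' part of $\mathbb B_k$ has length at most $n_{k+1}-m_k$ but is in any case broken up by the inserted copies of $d$ every $m_k-n_k$ steps, so it contributes runs of length $O(1)$ relative to $\bar n_k$. Hence $\bar m_k-\bar n_k=m_k-n_k$.

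Next I would estimate $\bar n_k$ in terms of $n_k$ and $m_k$. The index $\bar n_k$ equals $n_1$ plus the total length of the blocks $\mathbb B'_1,\ldots,\mathbb B'_{k-1}$ plus one more inserted digit $d$; each $\mathbb B'_j$ has length $h_j=(n_{j+1}-n_j)+\big(\lfloor (n_{j+1}-n_j)/(m_j-n_j)\rfloor+1\big)$, so $\bar n_k=n_k+o(n_k)$ because the number of inserted digits up to stage $k$ is $\sum_{j<k}o(h_j)=o(n_k)$, using that $m_j-n_j\ge (\hat\nu/2)\,n_{j+1}$ (when $\hat\nu>0$) or, when $\hat\nu=0$, using the explicit doubly-exponential choice of $\{n_k\}$, which makes the number of insertions negligible. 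Consequently $\bar m_k=\bar n_k+(m_k-n_k)=m_k+o(m_k)$ and $\bar n_{k+1}=n_{k+1}+o(n_{k+1})$, whence
\begin{equation*}
\lim_{k\to\infty}\frac{\bar m_k-\bar n_k}{\bar n_{k+1}}=\lim_{k\to\infty}\frac{m_k-n_k}{n_{k+1}}=\hat\nu,\qquad
\lim_{k\to\infty}\frac{\bar m_k-\bar n_k}{\bar n_k}=\lim_{k\to\infty}\frac{m_k-n_k}{n_k}=\nu.
\end{equation*}
Then, exactly as in the argument of Lemma \ref{lem31} (run in reverse), these two limit identities translate, via $\lim_n -\log|I_n(y)|/(2n)=\log\tau(i)$ and Lemma \ref{lem23}, into $\hat\nu(\bar x)=\hat\nu$ and $\nu(\bar x)=\nu$: the run of $i$'s of length $\bar m_k-\bar n_k$ starting at position $\bar n_k$ forces $|T^{\bar n_k}(\bar x)-y|$ to be comparable to $|I_{\bar m_k-\bar n_k}(y)|$, which gives the required matching of the liminf (for $\hat\nu$) and limsup (for $\nu$), while the boundedness of all other digits of $\bar x$ by $d$ prevents any better approximation coming from elsewhere.

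The main obstacle I anticipate is the careful bookkeeping showing that no \emph{unintended} long run of the digit $i$ appears in $\bar x$ — in particular, one must check that the digits $a_{m_k+1},\ldots,a_{n_{k+1}}$ (which lie in $\{1,\ldots,B\}$ and could happen to equal $i$) do not, together with the genuine run, create a run longer than $m_k-n_k$; this is precisely why the construction inserts the separator $d$ immediately before the run and immediately after every further $m_k-n_k$ digits, so that any maximal $i$-run in $\bar x$ is either one of the designed blocks or has length $\le m_k-n_k$ but occurs at a position $\ge \bar m_k$, hence does not spoil the liminf/limsup computations. Once this separation property is established, the limit computations above are routine, and the lemma follows; combined with $\dim_H f(E(B))=\dim_H E(B)\ge s(\mathcal A_B,\xi,\tau(i))$ and then letting $B\to\infty$ via Lemma \ref{lem27}, this yields the lower bound $\dim_H E(\hat\nu,\nu)\ge s(\xi,\tau(i))$ in the range $0\le\hat\nu\le\frac{\nu}{1+\nu}<\nu<\infty$, with the case $\nu=\infty$ handled by the analogous construction.
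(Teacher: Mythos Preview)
Your proposal is correct and follows essentially the same route as the paper. The paper works directly with the cumulative block lengths $\sum_{j\le k-1}h_j$ and carries out the $\varepsilon$-estimates for $|T^n(\bar x)-y|$ explicitly, whereas you first rename these positions as $\bar n_k,\bar m_k$, establish $\bar n_k\sim n_k$ and $\bar m_k-\bar n_k=m_k-n_k$, and then invoke the converse of the computation in Lemma~\ref{lem31}; this is the same argument organized slightly differently. One small wording issue: the unintended $i$-runs in the free part of $\mathbb B'_k$ are not ``of length $O(1)$'' but of length at most $m_k-n_k$ (as you correctly state later); the point, which you also identify, is that they occur at positions $\ge\bar m_k$ and hence contribute ratios no larger than $(m_k-n_k)/\bar m_k\to\nu/(1+\nu)<\nu$ and $(m_k-n_k)/\bar n_{k+1}\to\hat\nu$, so they do not disturb either exponent.
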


\begin{proof}
Fix $\bar{x}\in f\big(E(B)\big)$.

For $\varepsilon>0$ and $n$ large enough, there exists some $k$ such that $(\sum_{j=0}^{k-1}h_j)\leq n<(\sum_{j=0}^{k}h_j)$.
From the construction we deduce  that:
if $n=(\sum_{j=0}^{k-1}h_j)+1$, then
$$|T^{n}(\bar{x})-y|<|I_{m_{k}-n_{k}}(y)|\le|I_{n}(y)|^{\nu-\varepsilon},$$
where the last inequality holds by the fact $\lim_k \frac{\sum_{j=0}^{k-1}h_j}{n_k}=1;$
if $(\sum_{j=0}^{k-1}h_j)< n<(\sum_{j=0}^{k}h_j),$ then
$$|T^{n}(\bar{x})-y|\ge \frac{1}{2(d+2)^{2}}|I_{m_{k}-n_{k}}(y)|\ge|I_n(y)|^{\nu+\varepsilon}.$$
We then prove that  $\nu(\bar{x})=\nu$ by   the arbitrariness of $\varepsilon.$

On the other hand,  for $(\sum_{j=0}^{k-1}h_j)\leq N<(\sum_{j=0}^{k}h_j)$ with $k$ large enough, we pick $n=(\sum_{j=0}^{k-1}h_j)+1$ to obtain that
$$|T^{n}(\bar{x})-y|<|I_{m_{k}-n_{k}}(y)|\le|I_{\sum_{j=0}^{k}h_j}(y)|^{\hat{\nu}-\varepsilon}<|I_{N}(y)|^{\hat{\nu}-\varepsilon}.$$
When $N=(\sum_{j=0}^{k}h_j)$,  for all $n\in [1,N],$ we have that
$$|T^{n}(\bar{x})-y|\ge\frac{1}{2(d+2)^{2}}|I_{m_{k}-n_{k}}(y)|\ge|I_{N}(y)|^{\hat{\nu}+\varepsilon}.$$
We prove that $\hat{\nu}(\bar{x})=\hat{\nu}.$

Hence $\bar{x}\in E(\hat{\nu},\nu),$ as desired.
\end{proof}



Consequently, for $0\le\hat{\nu}\le\frac{\nu}{1+\nu}<\nu<\infty,$ we have
$\dim_{H}E(\hat{\nu},\nu)\geq s(\mathcal{A}_{B},\xi,{\tau(i)})$.
Letting $B\rightarrow\infty$ yileds $\dim_{H}E(\hat{\nu},\nu)\geq s(\xi,{\tau(i)}).$
\bigskip

We conclude this section by determining the  lower bound of $\dim_{H}E(\hat{\nu}, +\infty).$
We first study the case $0<\hat{\nu}<1$. Let
$$ n_{1}=2,~n_{k+1}=n_{k}^{k}+2n_{k},
~m_0=0,~m_{k}=\Big\lfloor\hat{\nu}n_{k}^{k}\Big\rfloor+n_{k},
~B_{k}=\lfloor m_{k}\log m_{k}\rfloor.$$
And thus
$$\lim_{k\to\infty}\frac{m_{k}-n_{k}}{n_{k+1}}=\hat{\nu},
~~\lim_{k\to\infty}\frac{m_{k}-n_{k}}{n_{k}}=\infty,
~~\lim_{k\to\infty}\frac{m_{k}-n_{k}}{m_{k}}=1.$$
Define
\begin{equation*}
  E=\left\{x\in[0,1)\colon
                          a_{n}(x)\le B_k \text{ if } m_{k}<n\le n_{k+1} \text{ for some } k;
                                                           \   a_n(x)=i,  \text{ otherwise}\right\}.
\end{equation*}
As before, for any $x=[a_1,a_2,\ldots]$ in $E,$
we construct an element $\bar{x}:=f(x)\colon$ insert a digit $B_k+1$ after positions $n_{k}$ and $m_{k}+i(m_{k}-n_{k}),$ $0\leq i\leq t_{k}$ in the continued fraction expansion of $x$, where
$t_{k}=\max \{t\in \mathbb{N}\colon m_{k}+t(m_{k}-n_{k})< n_{k+1}\}$; the resulted sequence is the continued fraction of $\bar x$.

The method establishing the the lower bound of $\dim_{H}E(B)$  applies to show that $\dim_{H}E\geq 1/2 .$
Moreover,  $f(E)$ is a subset of $E(\hat{\nu}, +\infty).$ It remains to prove that the Hausdorff dimension of $f(E)$ coincide with the one of $E$. To this end, we shall   show that $f^{-1}$ is a $(1-\varepsilon)$-H\"{o}lder mapping for any $\varepsilon>0$. We remark that Lemma \ref{lem214} may not apply directly here since   $\{B_{k}\}$ is an unbounded sequence.

\begin{lem}
For $\varepsilon>0,$ $f^{-1}$ is a $(1-\varepsilon)$-H\"{o}lder mapping.
\end{lem}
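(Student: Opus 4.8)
The plan is to bound the increase in the denominators caused by the insertion procedure. Write $x=[a_1,a_2,\ldots]\in E$ and $\bar x=f(x)=[c_1,c_2,\ldots]$. Since $f$ is a bijection between $E$ and $f(E)$, estimating the H\"older exponent of $f^{-1}$ amounts to comparing $|I_n(x)|$ with $|I_{N(n)}(\bar x)|$, where $N(n)$ is the position in the expansion of $\bar x$ corresponding to the $n$-th position of $x$ (i.e. $N(n)=n$ plus the number of inserted digits occurring before position $n$). By Lemma \ref{lem22} it suffices to compare $q_n(a_1,\ldots,a_n)$ with $q_{N(n)}(c_1,\ldots,c_{N(n)})$, and by Lemma \ref{lem21}(2) these differ only by the multiplicative contribution of the inserted blocks, so that
\begin{equation*}
q_{N(n)}(c_1,\ldots,c_{N(n)})\le 2^{L(n)}\,q_n(a_1,\ldots,a_n)\prod (B_{k}+1),
\end{equation*}
where $L(n)$ counts the inserted digits before position $N(n)$ and the product runs over the corresponding inserted digits $B_k+1$.

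First I would make the counting precise: an inserted digit appears after position $n_k$ and after each of the positions $m_k, m_k+(m_k-n_k),\ldots, m_k+t_k(m_k-n_k)$ inside the $k$-th gap, so the number of insertions with indices up to $n_{k+1}$ is $t_k+2\le \frac{n_{k+1}-m_k}{m_k-n_k}+2$. The key quantitative observation is that with the chosen parameters $n_{k+1}=n_k^k+2n_k$ and $m_k-n_k\asymp \hat\nu n_k^k$, we have $t_k=O(1/\hat\nu)$, hence the total number of inserted digits before position $N(n)$ is $O(k)$ when $n$ falls in the $k$-th block, and each inserted digit is at most $B_k+1=O(m_k\log m_k)$. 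Since $m_k$ and $n_k$ grow super-exponentially (indeed $\log n_{k+1}\sim k\log n_k$), one gets $\log\big(2^{L(n)}\prod(B_j+1)\big)=O(k\log m_k)=o(\log q_n)$, because $\log q_n\ge \frac{n-1}2\log 2$ and $n\ge n_k$ grows super-exponentially in $k$ while $k\log m_k$ is only polynomial in $n_k$ up to logarithmic factors. More carefully: when $\sum_{j\le k-1}h_j\le N(n)$, we have $n\ge n_k$, so $\log q_n\gtrsim n_k$, whereas the correction term is $O(k\,m_k\log m_k)=O(k\,n_k^k\log n_k)$ — wait, this needs the super-exponential growth to dominate, which it does precisely because for $n$ in the $k$-th block $n$ can be as large as $n_{k+1}\approx n_k^k$, so $\log q_n$ can be as large as $\approx n_k^k$, dwarfing $k n_k^k\log n_k$? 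No — I would instead exploit that the correction is genuinely negligible only because each individual gap contributes boundedly many insertions: the cleanest route is to show $L(n)\le c\cdot k$ and $\sum_{j\le k}\log(B_j+1)\le c\, k\log m_k$, then observe $\log q_n\ge \frac{n-1}2\log 2\ge \frac{n_k-1}2\log 2$ while $n_k\to\infty$ super-exponentially forces $k\log m_k=o(n_k)=o(\log q_n)$.

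Hence for every $\varepsilon>0$ there is $n_0$ such that for $n\ge n_0$,
\begin{equation*}
q_{N(n)}(c_1,\ldots,c_{N(n)})\le q_n(a_1,\ldots,a_n)^{\frac1{1-\varepsilon}},
\end{equation*}
which by Lemma \ref{lem22} gives $|I_{N(n)}(\bar x)|\ge \tfrac12\,|I_n(x)|^{\frac1{1-\varepsilon}}$, up to adjusting constants and absorbing the finitely many small $n$. Finally, to pass from the nested-interval comparison to a genuine H\"older bound $d\big(f^{-1}(\bar x),f^{-1}(\bar y)\big)\le c\, d(\bar x,\bar y)^{1-\varepsilon}$ for $\bar x,\bar y\in f(E)$: given $\bar x\ne \bar y$, let $N$ be the largest index with $I_N(\bar x)=I_N(\bar y)$; then $d(\bar x,\bar y)\ge \tfrac12|I_{N+1}(\bar x)|$ (their first differing digit forces them into distinct order-$(N+1)$ cylinders, which are separated on one side or, using Lemma \ref{lem23}, comparable to $|I_{N+1}|$), while $x=f^{-1}(\bar x)$ and $y=f^{-1}(\bar y)$ agree up to position $n=n(N)$ with $N(n)\ge N-1$, so $d(x,y)\le |I_{n}(x)|\le 2\,|I_{N(n)}(\bar x)|^{1-\varepsilon}\le C\,|I_{N}(\bar x)|^{1-\varepsilon}\le C'\,d(\bar x,\bar y)^{1-\varepsilon}$. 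The main obstacle is the bookkeeping in the second paragraph: verifying that with the specific choice of $\{n_k\},\{m_k\},\{B_k\}$ the total logarithmic cost of all inserted digits up to the $k$-th block is asymptotically negligible compared with $\log q_n$ uniformly over $n$ in that block — this is where the super-exponential growth rate of $n_k$ is essential, and one must be careful that the bound is uniform in $n$ across the whole block, not just at its endpoints.
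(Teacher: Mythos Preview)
Your overall strategy matches the paper's: both reduce the H\"older estimate to showing that the logarithmic contribution of the inserted digits is $o(\log q_n)$, and both deduce this from the boundedness of $t_k$ (indeed $t_k\approx (1-\hat\nu)/\hat\nu$ here) together with the super-exponential growth of $n_k$. The paper packages this as $(B_k+2)^{2\Delta_n+4}<2^{(n-1)\varepsilon}$; your version $k\log m_k=o(n_k)\le o(\log q_n)$ is the same estimate.

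The genuine gap is in your final step. The inequality $d(\bar x,\bar y)\ge\tfrac12|I_{N+1}(\bar x)|$ is not justified and is false in general: two points lying in \emph{adjacent} $(N{+}1)$-cylinders inside $I_N$ share a boundary and can be arbitrarily close to it, so no lower bound of this type holds without further input. Consequently your concluding chain $|I_N(\bar x)|^{1-\varepsilon}\le C'\,d(\bar x,\bar y)^{1-\varepsilon}$ breaks down --- in fact one always has $d(\bar x,\bar y)\le |I_N(\bar x)|$, the reverse of what you need. The paper closes this gap by exploiting that every partial quotient of a point in $f(E)$ near position $N$ is bounded by $B_k+1$: if $\bar x_1,\bar x_2$ first differ at position $N+1$, say with $a_{N+1}(\bar x_1)<a_{N+1}(\bar x_2)$, then Lemma~\ref{lem23} combined with $a_{N+2}(\bar x_2)\le B_k+1$ forces
\[
|\bar x_1-\bar x_2|\ \ge\ \big|I_{N+2}\big(a_1,\ldots,a_N,a_{N+1}(\bar x_2),B_k+1\big)\big|\ \ge\ \frac{1}{2(B_k+2)^4 q_N^2}.
\]
The extra factor $(B_k+2)^4$ is then absorbed into the main estimate (this is precisely the ``$+4$'' in the exponent $2\Delta_n+4$). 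You need this ingredient; once it is in place, the rest of your argument goes through after cleanup.
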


\begin{proof}
We write
$$m_{k}'=m_{k}+\sum_{l=1}^{k-1}(t_{l}+2),~n_{k}'=n_{k}+\sum_{l=1}^{k-1}(t_{l}+2),$$
and define the marked set
$$\mathbf{K}=\big\{m_{k}'+i(m_{k}-n_{k})+1\colon  0\leq i \leq t_{k}, k\geq 1\big\}\cup\big\{ n_{k}'\colon k\geq 1\big\}.$$

Let  $\Delta_{n}=\sharp\{i\leq n\colon i\in \mathbf{K}\}$, where $\sharp$ denotes the cardinality of a finite set. Let  $k\in \mathbb{N}$ such that $m_{k}'\leq n<m_{k+1}'$. We have
\begin{align*}
  \frac{\Delta_{n}\log B_{k}}{n}
&\leq\frac{\Big(\sum_{l=1}^{k-1}(t_{l}+2)+\frac{n-m_{k}'}{m_{k}-n_{k}}+1\Big)\log B_{k}}{n}\\
&\leq \frac{\Big(\sum_{l=1}^{k-1}(t_{l}+2)+1\Big)\log B_{k}}{m_{k}'}+\frac{\log B_{k}}{m_{k}-n_{k}}+\frac{\log B_{k}}{m_{k}}\to 0.
\end{align*}
 So there exists $K\in \mathbb{N}$, such that for $k\geq K$ and $n\geq m_{K}',$
\begin{equation}\label{e48}
(B_{k}+2)^{2\Delta_{n}+4}<2^{(n-1)\varepsilon}.
\end{equation}

For  $\overline{x_{1}}=f(x_{1})$ and $\overline{x_{2}}=f(x_{2})$ in $f(E),$  we assume without loss of generality that
\begin{equation*}
|\overline{x_{1}}-\overline{x_{2}}|<\frac{1}{2(B_{K}+2)^{2}q^{2}_{m_{K}'}(\overline{x_{1}})};
\end{equation*}
otherwise, 
$|f^{-1}(\overline{x_{1}})-f^{-1}(\overline{x_{2}})|<C|\overline{x_{1}}-\overline{x_{2}}|^{1-\varepsilon}$ for some $C,$ as desired.
Let
$$n=\min\{j\geq 1\colon a_{j+1}(\overline{x_{1}})\neq a_{j+1}(\overline{x_{2}})\}.$$
By Lemma \ref{lem22}, we have $m_{k}'\leq n<m_{k+1}'$ for some $k\geq K$ and $n+1<n_{k+1}'.$
Assume  that $\overline{x_{1}}>\overline{x_{2}}$  and $n$ is even  (the same conclusion can be drawn for the remaining cases).
There exist $1\leq \tau_{n+1}(\overline{x_{1}})<\sigma_{n+1}(\overline{x_{2}}) \leq B_{k}+1$ such that
$\overline{x_{1}}\in I_{n+1}(a_{1},\ldots,a_{n},\tau_{n+1}(\overline{x_{1}}))$,
$\overline{x_{2}}\in I_{n+1}(a_{1},\ldots,a_{n},\sigma_{n+1}(\overline{x_{2}})).$
Combining Lemma \ref{lem23} and the construction yields that $\overline{x_{1}}-\overline{x_{2}}$ is greater than the length of basic interval $I_{n+2}(a_{1},\ldots,a_{n},\sigma_{n+1}(\overline{x_{2}}),B_{k}+1).$ This implies that
\begin{align*}
 \overline{x_{1}}-\overline{x_{2}}\geq |I_{n+2}(a_{1},\ldots,a_{n},\sigma_{n+1}(\overline{x_{2}}),B_{k}+1)|
             \geq \frac{1}{2(B_{k}+2)^{4}q^{2}_{n}}.
\end{align*}
Furthermore, noting  that $f^{-1}(\overline{x_{1}}),$ $f^{-1}(\overline{x_{2}})\in I_{n-\Delta_{n}}(c_{1},\ldots,c_{n-\Delta_{n}}),$
where $(c_{1},\ldots,c_{n-\Delta_{n}})$ is obtained by eliminating all the terms $a_i$ with $i\in  \mathbf{K}$ from $(a_1,\ldots,a_n)$, we conclude that
\begin{align*}
 |f^{-1}(\overline{x_{1}})-f^{-1}(\overline{x_{2}})|&\leq |I_{n-\Delta_{n}}(a_{1},\ldots,a_{n-\Delta_{n}})|
\\&\leq \frac{1}{q^{2}_{n-\Delta_{n}}}
  \leq(B_{k}+2)^{2\Delta_{n}}\frac{1}{q^{2}_{n}}
  \leq2|\overline{x_{1}}-\overline{x_{2}}|^{1-\varepsilon},
\end{align*}
where the penultimate inequality follows by (\ref{e48}). This completes the proof.
\end{proof}

Now we deduce that $\dim_{H}E(\hat{\nu},\infty)\geq \frac{1-\varepsilon}{2}$ by Lemma \ref{lem212}.
Letting $\varepsilon\to 0$, we establish that   $\dim_{H}E(\hat{\nu},\infty)\geq \frac{1}{2}$  when $0<\hat\nu<1$. A slight change in the proof actually shows that the estimate $\dim_{H}E(\hat{\nu},\infty)\geq \frac{1}{2}$  also works for $\hat\nu=0$ or $1$.
Indeed, when $\hat{\nu}=0,$ we may take
$$n_{k}=2^{2^{2k}},~ m_{k}=n_{k}^{2}, ~B_{k}=2^{n_{k}};$$
when $\hat{\nu}=1,$ we may take
$$m_{k}=(k+1)!,~ n_{1}=1,~n_{k+1}=m_{k}+\frac{m_{k}}{\log m_{k}}, ~B_{k}=\lfloor2^{\sqrt{m_{k}}}\rfloor.$$



\begin{rem}
Applying the similar arguments as in Sections \ref{S_3} and \ref{S_4}, we  also prove  for any $\hat{\nu}, \nu\ge 0$ that
$$\{x\in[0,1]\colon \hat{\nu}(x)\ge\hat{\nu},~\nu(x)=\nu\}$$
and  $E(\hat{\nu},\nu)$ share the Hausdorff dimension.
\end{rem}

\section{Proofs of Theorems \ref{TZ1} and \ref{TZ3}}
In this section we study the Hausdorff dimensions of the following sets:
$$E(\hat{\nu})=\{x\in[0,1)\colon \hat{\nu}(x)=\hat{\nu}\},$$
$$\mathcal{U}(y,\hat{\nu})=\Big\{x\in[0,1)\colon \forall N\gg1, \exists~ n\in[1,N], \text{ such that } |T^{n}(x)-y|<|I_N(y)|^{\hat{\nu}}\Big\}.$$
A direct corollary of the definition is: if $\hat{\nu}_1>\hat{\nu}\ge 0$,
\begin{equation*}
E(\hat{\nu}_1)\subseteq \mathcal{U}(y,\hat{\nu})\subseteq \{x\in[0,1)\colon \hat{\nu}(x)\ge\hat{\nu}\}.
\end{equation*}
 Hence, the proofs of Theorems \ref{TZ1} and \ref{TZ3} will be divided into two parts:  the upper bound of $\dim_{H}\{x\in[0,1)\colon \hat{\nu}(x)\ge\hat{\nu}\}$ and the lower bound of $\dim_{H}E(\hat{\nu})$.

 Lemma \ref{full} combined with the fact  $E(0,0)\subset E(0)$ implies that the sets $E(0),$ $\mathcal{U}(y,0) $ and $\{x\in[0,1)\colon \hat{\nu}(x)\ge0\}$ are of full Lebesgue measure; we   only need to deal with the case $\hat{\nu}>0.$

We start with the  upper bound of $\dim_{H}\{x\in[0,1)\colon \hat{\nu}(x)\ge\hat{\nu}\}.$
\begin{lem}
If $0<\hat{\nu}\le 1,$ we have $$\dim_{H}\{x\in[0,1)\colon \hat{\nu}(x)\ge\hat{\nu}\}\le s\Big(\frac{4\hat{\nu}}{(1+\hat{\nu})^{2}},{\tau(i)}\Big).$$ If $\hat{\nu}>1,$ then $\dim_{H}\{x\in[0,1)\colon \hat{\nu}(x)\ge\hat{\nu}\}=0.$
\end{lem}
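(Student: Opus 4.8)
The plan is to reduce the upper bound to the covering argument already developed for $E(\hat\nu,\nu)$ in Section \ref{S_3}, applied uniformly over the possible values of the asymptotic exponent. Fix $\hat\nu>0$ and let $x$ be a point with $\hat\nu(x)\ge\hat\nu$ and non-periodic continued fraction expansion. First I would run the construction of Lemma \ref{lem31}: the condition $\hat\nu(x)>0$ forces $\nu(x)>0$ as well (since $\hat\nu(x)\le\nu(x)$), so the sequences $\{n_k\},\{m_k\}$ exist with $a_{n_k+1}(x)=\cdots=a_{m_k}(x)=i$, with $\liminf_k (m_k-n_k)/n_{k+1}=\hat\nu(x)\ge\hat\nu$ and $\limsup_k (m_k-n_k)/n_k=\nu(x)=:\nu$. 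If $\hat\nu>1$ then from $\limsup_k (m_k-n_k)/m_k=\nu/(1+\nu)<1$ and $\liminf_k(m_k-n_k)/n_{k+1}=\hat\nu(x)\ge\hat\nu>1$ one gets, arguing as in Lemma \ref{lem32}, that $\hat\nu(x)\le\nu/(1+\nu)<1$, a contradiction; hence only periodic expansions survive and $\dim_H\{x:\hat\nu(x)\ge\hat\nu\}=0$. So the substance is the case $0<\hat\nu\le1$.

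For $0<\hat\nu\le 1$, the key observation is that the exponent governing the covering of Section \ref{S_3} was $\xi=\xi(\hat\nu,\nu)=\frac{\nu^2}{(1+\nu)(\nu-\hat\nu)}$, and that the relevant dimensional quantity $s(\xi,\tau(i))$ is non-increasing in $\xi$ (Lemma \ref{lem210}(2)). Thus I would minimize $\xi(\hat\nu,\nu)$ over the admissible range of $\nu$ to get the worst (largest-dimension) stratum. Treating $\nu\mapsto \frac{\nu^2}{(1+\nu)(\nu-\hat\nu)}$ for $\nu>\hat\nu/(1-\hat\nu)$ (equivalently $\hat\nu\le\nu/(1+\nu)$), a routine differentiation shows the minimum is attained at $\nu_*=\frac{2\hat\nu}{1-\hat\nu}$ (for $\hat\nu<1$; the endpoint $\hat\nu=1$ is handled by a limiting value), giving $\xi(\hat\nu,\nu_*)=\frac{4\hat\nu}{(1+\hat\nu)^2}$. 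One also checks $\hat\nu\le\nu_*/(1+\nu_*)=\frac{2\hat\nu}{1+\hat\nu}$, so the constraint is satisfied. Consequently for every admissible $\nu$ one has $s(\xi(\hat\nu,\nu),\tau(i))\le s\big(\tfrac{4\hat\nu}{(1+\hat\nu)^2},\tau(i)\big)$.

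To turn this into a genuine covering bound I would not work stratum by stratum over a continuum of $\nu$; instead I would directly build one covering of $\{x:\hat\nu(x)\ge\hat\nu\}$ mimicking Section \ref{S_3} but using only the uniform information $\liminf_k(m_k-n_k)/n_{k+1}\ge\hat\nu$. Concretely: the analogue of estimate (\ref{e36}) becomes the lower bound $\sum_{i=1}^k(m_i-n_i)\ge m_k\,(\xi_0-\varepsilon)$ for infinitely many $k$, where $\xi_0=\frac{4\hat\nu}{(1+\hat\nu)^2}$ is exactly the value obtained by the Stolz--Ces\`aro computation of (\ref{e34}) when one only knows the $\liminf$ against $n_{k+1}$ is $\ge\hat\nu$ and no upper control on $\nu$ (the extremal configuration being $m_k/n_k\to 1+\nu_*$); here I would reproduce the Stolz--Ces\`aro chain with $\hat\nu$ in place of the exact liminf, and optimize the resulting bound in $\nu$ along the way, landing on $\xi_0$. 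The same block structure $\mathcal D_{n_1,m_1;\ldots;n_k,m_k}$, the same growth estimate $k\le C\log m_k$ (which used only $m_k\ge(1+\hat\nu/2)m_{k-1}$, still valid), and the same pre-dimensional estimates via Lemma \ref{lem29}, Remark \ref{rem2}, and Lemma \ref{lem21} then give, for $t=s(\xi_0-\varepsilon,\tau(i))+\varepsilon/2$, that $\mathcal H^{t+\varepsilon/2}\big(\{x:\hat\nu(x)\ge\hat\nu\}\big)<\infty$, exactly as in Lemma \ref{lem35}; letting $\varepsilon\to0$ and using continuity of $s(\cdot,\tau(i))$ (Lemma \ref{lem210}(2)) yields the claimed bound $s\big(\tfrac{4\hat\nu}{(1+\hat\nu)^2},\tau(i)\big)$.

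The main obstacle I anticipate is the optimization step inside the Stolz--Ces\`aro argument: in Section \ref{S_3} the value $\nu$ was fixed, so (\ref{e34}) gave a single clean inequality, whereas here $\nu$ ranges over all of $(\hat\nu/(1-\hat\nu),\infty]$ and I must extract, from $\liminf_k(m_k-n_k)/n_{k+1}\ge\hat\nu$ alone, the \emph{worst-case} lower bound on $\sum_{i\le k}(m_i-n_i)/m_k$ across that range --- which amounts to verifying that the minimum of $\frac{\nu^2}{(1+\nu)(\nu-\hat\nu)}$ is the correct controlling quantity and that the corresponding extremal sequences are compatible with condition (1) of Lemma \ref{lem31}. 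Once this calculus fact and the compatibility check are in hand, the rest is a line-by-line transcription of Lemmas \ref{lem35} and the paragraph preceding it. The $\hat\nu>1$ and $\hat\nu=1$ boundary cases need only the short contradiction argument and a limiting-value remark, respectively.
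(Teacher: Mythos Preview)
Your approach is correct and genuinely different from the paper's. The paper argues by countable stratification: for small $\varepsilon>0$ it sets
\[
E_\varepsilon(\hat\nu,\nu)=\Big\{x:\hat\nu(x)\ge\hat\nu,\ \nu\le\nu(x)\le\tfrac{\nu+\varepsilon}{1-\varepsilon}\Big\},
\]
covers $\{\hat\nu(x)\ge\hat\nu\}$ by $\bigcup_{\nu\in\mathbb Q^+}E_\varepsilon(\hat\nu,\nu)$, and then reruns the Section~\ref{S_3} machinery on each stratum with $\varepsilon$-perturbed formulas, obtaining $\dim_H E_\varepsilon(\hat\nu,\nu)\le s\big(\tfrac{\nu^2}{(\nu-\hat\nu+\hat\nu\varepsilon+\varepsilon)(1+\nu)},\tau(i)\big)$; a sup over $\nu$, the calculus minimization, and $\varepsilon\to0$ finish. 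Your route builds a \emph{single} covering with the target exponent $\xi_0=4\hat\nu/(1+\hat\nu)^2$ from the start, which is cleaner: it avoids the $\varepsilon$-thickened strata and the attendant bookkeeping in the modified versions of (\ref{e36})--(\ref{e34}).

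Your anticipated obstacle actually dissolves, and it is worth saying precisely why. You do not need to extract the worst-case lower bound ``from $\liminf_k(m_k-n_k)/n_{k+1}\ge\hat\nu$ alone'' by some uniform argument; for each individual $x$ the sequences $\{n_k\},\{m_k\}$ of Lemma~\ref{lem31} come with a \emph{specific} value $\nu'=\nu(x)$, and the Stolz--Ces\`aro chain of Section~\ref{S_3} run verbatim with $(\hat\nu',\nu')=(\hat\nu(x),\nu(x))$ yields, for infinitely many $k$, the inequality $\sum_{i\le k}(m_i-n_i)\ge m_k\big(\xi(\hat\nu',\nu')-\varepsilon\big)$ with $\xi(\hat\nu',\nu')=\tfrac{(\nu')^2}{(1+\nu')(\nu'-\hat\nu')}$. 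Since $\xi$ is increasing in its first argument and $\min_{\nu'}\xi(\hat\nu,\nu')=\xi_0$, one has $\xi(\hat\nu',\nu')\ge\xi_0$, so the same infinitely many $k$ satisfy the $\xi_0$-version of (\ref{e36}). That is all that is needed for the covering $\Lambda_{k,m_k}$ (defined with $\xi_0$) to catch every $x$, and Lemma~\ref{lem35} then transcribes verbatim. The $\hat\nu>1$ case and the boundary $\hat\nu=1$ (where $\xi_0=1$ and one uses $\xi_0-\varepsilon<1$ before passing to the limit via Lemma~\ref{lem210}) are handled as you indicate.
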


\begin{proof}
For $\varepsilon>0$ small enough, we define
$$E_\varepsilon(\hat{\nu},\nu)=\Big\{x\in[0,1)\colon \hat{\nu}(x)\ge\hat{\nu},~\nu\le\nu(x)\le\frac{\nu+\varepsilon}{1-\varepsilon}\Big\}.$$
Since $$\{x\in[0,1)\colon \hat{\nu}(x)\ge\hat{\nu}\}\subseteq\bigcup_{\nu\in \mathbb{Q}^{+}}E_\varepsilon(\hat{\nu},\nu),$$
where $\mathbb{Q}^{+}$ denotes the set of positive rational numbers,  we have $$\dim_{H}\{x\in[0,1)\colon \hat{\nu}(x)\ge\hat{\nu}\}\le\sup\big\{\dim_{H}E_\varepsilon(\hat{\nu},\nu)\colon \nu\in \mathbb{Q}^{+}\big\}.$$

If $\hat{\nu}>1$, the set $E_\varepsilon(\hat{\nu},\nu)$ is at most countable by Lemmas \ref{lem31} and \ref{lem32}, and  $\dim_{H}\{x\in[0,1)\colon \hat{\nu}(x)\ge\hat{\nu}\}=0.$

If $0<\hat{\nu}\le1,$ we obtain
$$\dim_{H}E_\varepsilon(\hat{\nu},\nu)\leq s\Big(\frac{\nu^{2}}{(\nu-\hat{\nu}+\hat{\nu}\varepsilon+\varepsilon)(1+\nu)},{\tau(i)}\Big)$$
in much the same way as  the proof for the upper bound of $\dim_{H}E(\hat{\nu},\nu)$; we sketch the main differences$\colon$

In Lemma \ref{lem32},  $\limsup\frac{m_{k}-n_{k}}{m_{k}}$ is estimated by
\begin{equation*}
\frac{\nu}{1+\nu}\le\limsup\limits_{k\rightarrow\infty}\frac{m_{k}-n_{k}}{m_{k}}\leq\frac{\nu+\varepsilon}{1+\nu}.
\end{equation*}

The formulae (\ref{e36}) through (\ref{e37}) are replaced by
\begin{align*}
    \sum_{i=1}^{k}(m_{i}-n_{i})
\ge  m_{k}\Big(\frac{(\nu+\varepsilon)^{2}}{(\nu-\hat{\nu}+\hat{\nu}\varepsilon+\varepsilon)(1+\nu)}-\varepsilon\Big),
\end{align*}
$$1+\nu\le\limsup_{k\to\infty}\frac{m_k}{n_k}\le\frac{1+\nu}{1-\varepsilon},$$
$$\liminf_{k\to\infty}\frac{m_k}{n_{k+1}}\ge\frac{\hat{\nu}(1+\nu)}{\nu+\varepsilon},$$
$$\liminf_{k\to\infty}\frac{\sum_{i=1}^{k}(m_i-n_i)}{m_{k+1}}
\ge \frac{\hat{\nu}(\nu+\varepsilon)(1-\varepsilon)}{(\nu-\hat{\nu}+\hat{\nu}\varepsilon+\varepsilon)(1+\nu)},$$
resepctively. The set $\Omega$ is replaced by
\begin{align*}
  \Big\{(\{n_{k}\},\{m_{k}\})\colon \liminf_{k\to\infty}\frac{m_{k}-n_{k}}{n_{k+1}}\ge\hat{\nu},&~\nu\leq\limsup_{k\to\infty}
                                     \frac{m_{k}-n_{k}}{n_{k}}\leq\frac{\nu+\varepsilon}{1-\varepsilon},\\&
                                       n_k<m_k<n_{k+1} ~\text {for all} ~k\geq 1 \Big\}.
\end{align*}

Finally, since the function $\frac{\nu^{2}}{(\nu-\hat{\nu}+\hat{\nu}\varepsilon+\varepsilon)(1+\nu)}$ of $\nu$ attains its minimum at   $\nu=\frac{2\hat{\nu}-2(\hat{\nu}+1)\varepsilon}{1-\hat{\nu}+(\hat{\nu}+1)\varepsilon}$, we have by Lemma \ref{lem210}(2) that
\begin{align*}
  \dim_{H}\{x\in[0,1)\colon \hat{\nu}(x)\ge\hat{\nu}\}
  &\leq \sup \left\{s\Big(\frac{\nu^{2}}{(\nu-\hat{\nu}+\hat{\nu}\varepsilon+\varepsilon)(1+\nu)},{\tau(i)}\Big)\colon \nu \in\mathbb{Q}^{+} \right\}\\
  &\leq s\Big(\frac{4\big(\hat{\nu}-(\hat{\nu}+1)\varepsilon\big)}{\big(1+\hat{\nu}-(\hat{\nu}+1)\varepsilon\big)^{2}},{\tau(i)}\Big)
  \rightarrow s\Big(\frac{4\hat{\nu}}{(1+\hat{\nu})^{2}},{\tau(i)}\Big)
\end{align*}
as $\varepsilon\to 0$.
\end{proof}

We now deal with the lower bound of the $\dim_{H}E(\hat{\nu})$ for $0<\hat{\nu}\le1.$

\begin{lem}
For $0<\hat{\nu}\le1,$ we have $\dim_{H}E(\hat{\nu})\ge s\Big(\frac{4\hat{\nu}}{(1+\hat{\nu})^{2}},{\tau(i)}\Big).$
\end{lem}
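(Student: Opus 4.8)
The plan is to realize the desired lower bound $\dim_H E(\hat\nu)\ge s\big(\tfrac{4\hat\nu}{(1+\hat\nu)^2},\tau(i)\big)$ as a limiting case of the construction already carried out for $E(\hat\nu,\nu)$ in Section \ref{S_4}. The key observation is that $\tfrac{4\hat\nu}{(1+\hat\nu)^2}$ is exactly the infimum over admissible $\nu$ of the quantity $\xi=\xi(\hat\nu,\nu)=\tfrac{\nu^2}{(1+\nu)(\nu-\hat\nu)}$ appearing in Theorem \ref{TZ2}: differentiating in $\nu$, the minimum is attained at $\nu=\tfrac{2\hat\nu}{1-\hat\nu}$ (for $\hat\nu<1$), and substituting gives precisely $\xi=\tfrac{4\hat\nu}{(1+\hat\nu)^2}$. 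Since $E(\hat\nu,\nu)\subset E(\hat\nu)$ for every such $\nu$, and since by Theorem \ref{TZ2} we have $\dim_H E(\hat\nu,\nu)=s(\xi(\hat\nu,\nu),\tau(i))$ whenever $0\le\hat\nu\le\tfrac{\nu}{1+\nu}<\nu<\infty$, choosing $\nu=\tfrac{2\hat\nu}{1-\hat\nu}$ (which indeed satisfies $\tfrac{\nu}{1+\nu}=\hat\nu$, the boundary case, so one should instead take $\nu$ slightly larger and let it tend to $\tfrac{2\hat\nu}{1-\hat\nu}$) immediately yields
$$\dim_H E(\hat\nu)\ge \sup_{\nu}\dim_H E(\hat\nu,\nu)\ge s\Big(\frac{4\hat\nu}{(1+\hat\nu)^2},\tau(i)\Big),$$
using the continuity of $s(\cdot,\tau(i))$ from Lemma \ref{lem210}(2).

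Concretely, the steps I would carry out are: first, verify the elementary calculus fact that $\nu\mapsto \tfrac{\nu^2}{(1+\nu)(\nu-\hat\nu)}$ on $(\tfrac{\hat\nu}{1-\hat\nu},\infty)$ — equivalently on the range of $\nu$ with $\tfrac{\nu}{1+\nu}>\hat\nu$ — is minimized at $\nu_0=\tfrac{2\hat\nu}{1-\hat\nu}$ with minimum value $\tfrac{4\hat\nu}{(1+\hat\nu)^2}$. Second, observe that for $\nu>\nu_0$ arbitrarily close to $\nu_0$ we have $\hat\nu<\tfrac{\nu}{1+\nu}<\nu<\infty$, so the lower bound part of Theorem \ref{TZ2} already proved in Section \ref{S_4} gives $\dim_H E(\hat\nu,\nu)\ge s(\xi(\hat\nu,\nu),\tau(i))$. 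Third, use $E(\hat\nu,\nu)\subseteq E(\hat\nu)$ together with the monotonicity of Hausdorff dimension to get $\dim_H E(\hat\nu)\ge s(\xi(\hat\nu,\nu),\tau(i))$ for all such $\nu$. Fourth, let $\nu\downarrow\nu_0$; since $\xi(\hat\nu,\nu)\to\tfrac{4\hat\nu}{(1+\hat\nu)^2}$ and $\alpha\mapsto s(\alpha,\tau(i))$ is continuous, we conclude. For the endpoint $\hat\nu=1$, one treats it directly: the target is $s(1,\tau(i))=\tfrac12$, and the construction of $E(\hat\nu,+\infty)$ from the end of Section \ref{S_4} already provides $\dim_H E(1,+\infty)\ge\tfrac12$ with $E(1,+\infty)\subset E(1)$.

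There is no serious obstacle here because all the heavy lifting — the measure construction, the Hölder estimate for $\mu$, and the dimension-preserving insertion map $f$ — was already done in Section \ref{S_4} for the two-parameter family. The only mild subtlety is that the optimal $\nu_0=\tfrac{2\hat\nu}{1-\hat\nu}$ sits exactly on the boundary $\tfrac{\nu}{1+\nu}=\hat\nu$ of the regime where Theorem \ref{TZ2} gives the formula $s(\xi,\tau(i))$ rather than $s(\tfrac{\nu}{1+\nu},\tau(i))$; however these two expressions agree when $\nu=\nu_0$ (both equal $s(\tfrac{4\hat\nu}{(1+\hat\nu)^2},\tau(i))$ there), so approaching from either side is harmless, and taking $\nu\downarrow\nu_0$ strictly inside the good regime avoids any case distinction. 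Combined with the matching upper bound from the previous lemma, this completes the proof of Theorem \ref{TZ1} and, together with the inclusions $E(\hat\nu_1)\subseteq\mathcal U(y,\hat\nu)\subseteq\{x:\hat\nu(x)\ge\hat\nu\}$, of Theorem \ref{TZ3}.

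\begin{proof}
Assume first $0<\hat\nu<1$. Consider the function
$$\nu\longmapsto \xi(\nu):=\frac{\nu^2}{(1+\nu)(\nu-\hat\nu)}$$
on the interval $\big(\tfrac{\hat\nu}{1-\hat\nu},\infty\big)$, which is precisely the set of $\nu$ with $\tfrac{\nu}{1+\nu}>\hat\nu$. A direct computation shows that $\xi'(\nu)=0$ if and only if $(1-\hat\nu)\nu=2\hat\nu$, i.e. $\nu=\nu_0:=\tfrac{2\hat\nu}{1-\hat\nu}$, and that this critical point is a minimum with
$$\xi(\nu_0)=\frac{4\hat\nu}{(1+\hat\nu)^2}.$$
For every $\nu$ with $\hat\nu<\tfrac{\nu}{1+\nu}<\nu<\infty$ we have, by the lower bound established in Section \ref{S_4},
$$\dim_H E(\hat\nu,\nu)\ge s\big(\xi(\nu),\tau(i)\big),$$
and since $E(\hat\nu,\nu)\subseteq E(\hat\nu)$ this gives $\dim_H E(\hat\nu)\ge s\big(\xi(\nu),\tau(i)\big)$. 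Letting $\nu\downarrow\nu_0$ and using that $\xi(\nu)\to\xi(\nu_0)=\tfrac{4\hat\nu}{(1+\hat\nu)^2}$ together with the continuity of $\alpha\mapsto s(\alpha,\tau(i))$ from Lemma \ref{lem210}(2), we obtain
$$\dim_H E(\hat\nu)\ge s\Big(\frac{4\hat\nu}{(1+\hat\nu)^2},\tau(i)\Big).$$

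It remains to treat $\hat\nu=1$, where $\tfrac{4\hat\nu}{(1+\hat\nu)^2}=1$ and, by (\ref{equ2.7}), $s(1,\tau(i))=\tfrac12$. The construction at the end of Section \ref{S_4} produces a Cantor set contained in $E(1,+\infty)\subseteq E(1)$ of Hausdorff dimension at least $\tfrac12$, so $\dim_H E(1)\ge\tfrac12=s(1,\tau(i))$. This completes the proof.
\end{proof}
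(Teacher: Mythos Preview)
Your approach is essentially the same as the paper's: use the inclusion $E(\hat\nu,\nu)\subseteq E(\hat\nu)$, invoke the lower bound from Theorem \ref{TZ2}, and optimize over $\nu$. One small inaccuracy worth noting: you claim that $\nu_0=\tfrac{2\hat\nu}{1-\hat\nu}$ sits on the boundary $\tfrac{\nu}{1+\nu}=\hat\nu$, but in fact $\tfrac{\nu_0}{1+\nu_0}=\tfrac{2\hat\nu}{1+\hat\nu}>\hat\nu$ whenever $0<\hat\nu<1$, so $\nu_0$ lies strictly inside the admissible regime and the limiting step $\nu\downarrow\nu_0$ is unnecessary --- the paper simply takes $\nu=\nu_0$ directly and reads off $\dim_H E(\hat\nu)\ge\dim_H E(\hat\nu,\nu_0)=s\big(\tfrac{4\hat\nu}{(1+\hat\nu)^2},\tau(i)\big)$.
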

\begin{proof}
Noting that $E(\hat{\nu},\nu)$ is a subset of $E(\hat{\nu})$ for $\nu\ge\frac{\hat{\nu}}{1-\hat{\nu}}$ (or $\hat{\nu}\le\frac{\nu}{1+\nu}$), we have
$$\dim_{H}E(\hat{\nu})\geq \dim_{H}E(\hat{\nu},\nu)=s\Big(\frac{\nu^{2}}{(1+\nu)(\nu-\hat{\nu})},{\tau(i)}\Big).$$
Since the function $\frac{\nu^{2}}{(1+\nu)(\nu-\hat{\nu})}$ is continuous for $\nu\in [\frac{\hat{\nu}}{1-\hat{\nu}},\infty],$
and attains its minimum at   $\nu=\frac{2\hat{\nu}}{1-\hat{\nu}},$ so by Lemma \ref{lem210}(2), we have
$$\dim_{H}E(\hat{\nu})\geq\dim_{H}E\Big(\hat{\nu},\frac{2\hat{\nu}}{1-\hat{\nu}}\Big)=s\Big(\frac{4\hat{\nu}}{(1+\hat{\nu})^{2}},{\tau(i)}\Big).$$
\end{proof}

\section{Proof of Theorem \ref{thm1}}

In this section, we prove  Theorem \ref{thm1} by considering the upper and lower bounds of $\dim_{H}\big(F(\alpha)\cap G(\beta)\big)$ respectively. Recall that
\begin{equation*}
F(\alpha)=\left\{x\in[0,1)\colon \liminf_{n\to\infty}\frac{R_{n}(x)}{n}=\alpha\right\},
G(\beta)=\left\{x\in[0,1)\colon \limsup_{n \to\infty}\frac{R_{n}(x)}{n}=\beta\right\}.
\end{equation*}
The proof of Theorem \ref{thm1} goes along the  lines as that of Theorem \ref{TZ2} with some minor modifications.

Noting that $\Big\{x\in [0,1)\colon \lim_{n\to\infty}\frac{R_{n}(x)}{\log_{\frac{\sqrt{5}+1}{2}}n}=\frac{1}{2}\Big\}\subset F(0)\cap G(0),$
 we have $F(0)\cap G(0)$ is of full Lebesgue measure.  Furthermore, since $F(\alpha)\cap G(1)\subset G(1)$ and $F(0)\cap G(\beta)\subset G(\beta)$, we have
$\dim_{H}F(\alpha)\cap G(1)\leq s(1,\tau(1))$ and $\dim_{H}\big(F(0)\cap G(\beta)\big)\le s(\beta,\tau(1))$.
We only need to consider the case $0<\alpha\le\beta<1.$
\subsection{Upper bound of $\dim_{H}\big(F(\alpha)\cap G(\beta)\big)$}

For $x=[a_{1}(x),a_{2}(x),\ldots]\in F(\alpha)\cap G(\beta)$ with non-periodic continued fraction expansion, we associate $x$ with two sequences $\{n_k\}$ and $\{m_k\}$ that satisfy the following properties$\colon$

{\rm{(1)}} $n_k<m_k<n_{k+1}<m_{k+1}$ for $k\ge1$;

{\rm{(2)}} $a_{n_k}(x)=\cdots=a_{m_k}(x)$ for $k\ge1;$

{\rm{(3)}} $\displaystyle\liminf_{k\to\infty}\frac{m_k-n_k}{n_{k+1}}=\frac{\alpha}{1-\alpha},$ $\displaystyle\limsup_{k\to\infty}\frac{m_k-n_k}{m_k}=\beta;$

{\rm{(4)}} The sequence $\{m_k\}$ grows exponentially;

{\rm{(5)}} Write $\xi=\frac{\beta^{2}(1-\alpha)}{\beta-\alpha}$. For any $\varepsilon>0$, there exist infinitely many $k$ such that
 \begin{equation*}
 \sum_{i=1}^{k}(m_{i}-n_{i}+1)\ge   m_{k}(\xi-\varepsilon).
  \end{equation*}
To this end, we define two ascending sequences $\{n_{k}'\}$ and $\{m_{k}'\}$ as follows$\colon$
$$n_{1}'=1,~ a_{n_{k}'}(x)= a_{n_{k}'+1}(x)= \cdots =a_{m_{k}'}(x)\neq a_{m_{k}'+1}(x),~ n_{k+1}'=m_{k}'+1.$$
Since $\beta=\limsup R_n(x)/n>0$, we have that $\limsup_{k\to\infty}(m_{k}'-n_{k}')= +\infty$, which  enables us to pick a non-decreasing subsequence of $\{(n_k', m_k')\}_{k\geq 1}\colon$ put $(n_{1},m_{1})=(n'_{1},m'_{1});$
having choosen $(n_{k}, m_k)=(n_{j_{k}}', m_{j_{k}}')$  for $k\ge1$, we set
$j_{k+1}=\min\big\{j>j_k\colon m_{j}'-n_{j}'> m_{k}-n_{k}\big\},$
and put $(n_{k+1},m_{k+1})=(n_{j_{k+1}}', m_{j_{k+1}}')$.
\smallskip

We readily check the following properties$\colon$
\smallskip

\textbf{(a)} the sequence $\{m_{k}-n_{k}\}_{k\geq 1}$ is non-decreasing and $\lim_{k\to\infty} (m_{k}-n_{k})= +\infty.$
\smallskip

\textbf{(b)} If $m_{k}\leq n\leq n_{k+1}+(m_{k}-n_{k})$ for $k\geq 1$, then $R_{n}(x)=m_{k}-n_{k}+1$, and
\begin{equation*}
\frac{m_{k}-n_{k}+1}{n_{k+1}+(m_{k}-n_{k})}\leq \frac{R_{n}(x)}{n}\leq \frac{m_{k}-n_{k}+1}{m_{k}}.
\end{equation*}

\textbf{(c)} If $n_{k+1}+(m_{k}-n_{k})<n< m_{k+1}$ for   $k\geq 1$, then $R_{n}(x)=n-n_{k+1}+1$, and
\begin{equation*}
\frac{m_{k}-n_{k}+1}{n_{k+1}+(m_{k}-n_{k})}\leq \frac{R_{n}(x)}{n}\leq \frac{m_{k+1}-n_{k+1}+1}{m_{k+1}}.
\end{equation*}

Properties \textbf{(a)} and \textbf{(b)} imply
\begin{equation}\label{e63}
\alpha=\liminf_{n\to\infty}\frac{R_{n}(x)}{n}
      =\liminf_{k\rightarrow\infty}\frac{m_{k}-n_{k}+1}{n_{k+1}+(m_{k}-n_{k})}
\end{equation}
and
\begin{equation}\label{e64}
\beta=\limsup_{n\to\infty}\frac{R_{n}(x)}{n}
     =\limsup_{k\rightarrow\infty}\frac{m_{k+1}-n_{k+1}+1}{m_{k+1}}.
\end{equation}
From  (\ref{e63})  we obtain that
\begin{equation}\label{e65}
\liminf_{k\to\infty}\frac{R_{n_{k+1}}(x)}{n_{k+1}}=\liminf_{k\to\infty}\frac{m_{k}-n_{k}+1}{n_{k+1}}
                                                  =\frac{\alpha}{1-\alpha},
\end{equation}
which combined with (\ref{e64}) yields $\frac\alpha{1-\alpha}\le\beta$, or equivalently  $\alpha\leq \frac{\beta}{1+\beta}$.
And thus the set $F(\alpha)\cap G(\beta)$ is at most countable when $\alpha>\frac{\beta}{1+\beta}$. Moreover, the equality (\ref{e65})   implies that $\{m_{k}\}_{k\geq 1}$ grows at least exponentially,
namely, there exists $C>0$, independent of $x,$ such that $k \leq C\log{m_{k}}$ for $k$ large enough. Further, by (\ref{e64}) and (\ref{e65}), we also have
\begin{equation*}
\liminf_{k\to\infty}\frac{m_{k}}{n_{k+1}}
\ge \liminf_{k\to\infty}\frac{m_{k}-n_{k}}{n_{k+1}}
\cdot\liminf_{k\to\infty}\frac{m_{k}}{m_{k}-n_{k}}
=\frac{\alpha}{\beta(1-\alpha)},
\end{equation*}
which combined with  the Stolz-Ces\`aro theorem implies that
\begin{equation*}
\begin{split}
  \liminf_k\frac{\sum_{i=1}^{k-1}(m_i-n_i+1)}{m_{k}}
\ge \frac{\alpha\beta(1-\beta)}{\beta-\alpha},
\end{split}
\end{equation*}
and thus, for $\varepsilon>0$ and $k$ large enough, 
\begin{equation*}
\sum_{i=1}^{k}(m_{i}-n_{i}+1)\geq \Big(\frac{\alpha\beta(1-\beta)}{\beta-\alpha}-\frac{\varepsilon}{2}\Big)m_{k}+(m_{k}-n_{k}+1).
\end{equation*}
Since there exist infinitely many $k$ such that
\begin{equation}\label{e610}
    m_{k}-n_{k}+1\ge
     m_{k}(\beta-\frac{\varepsilon}{2}),
\end{equation}
  Property (5) holds for such $k$.

\medskip

\underline{Covering of $F(\alpha)\cap G(\beta)$.}\quad We collect all sequences  $(\{n_{k}\}, \{m_{k}\})$  associated with $x\in F(\alpha)\cap G(\beta)$ as above to form a set
$$\Omega=\Big\{(\{n_{k}\},\{m_{k}\})\colon \text{Properties } (1) ~\& ~(3) \text{ are  fulfilled}\Big\}.$$
For $(\{n_{k}\},\{m_{k}\})\in \Omega$ and $\{b_{k}\}\subset \mathbb N$, write
\smallskip
\begin{align*}
&H(\{n_{k}\},\{m_{k}\})=\big\{x\in[0,1)\colon \text{Property } (2) \text{ is fulfilled}\big\},\\&
\Lambda_{k, m_k}=\Big\{(n_1,m_1; \ldots; n_{k-1},m_{k-1};n_{k})\colon n_{1}<m_{1}<\cdots<m_{k-1}<n_{k}, (\ref{e610})\text{ holds} \Big\},\\ &
\mathcal{D}_{n_1,m_1;\ldots;n_{k}, m_{k}}(\{b_{k}\})=\big\{(\sigma_{1},\ldots,\sigma_{m_{k}})\in \mathbb{N}^{m_{k}}\colon \sigma_{n_{j}}=\cdots=\sigma_{m_{j}}=b_{j}\text{ for all }1\leq j\leq k \big\}.
\end{align*}
We obtain a covering of $ F(\alpha)\cap G(\beta)$:
\begin{equation*}
\begin{split}
  &F(\alpha)\cap G(\beta) \subseteq \bigcup_{(\{n_{k}\},\{m_{k}\})\in \Omega}H(\{n_{k}\},\{m_{k}\}) \\
                   &\subseteq\bigcap_{K=1}^{\infty} \bigcup_{k=K}^{\infty}\bigcup_{m_{k}= e^{\frac{k}{C}}}^{\infty} \bigcup_{(n_{1},m_{1},\ldots,m_{k-1},n_{k})\in\Lambda_{k, m_k}}
                             \bigcup_{(b_{1},\ldots,b_{k})\in \mathbb{N}^{k}}
                             \bigcup_{(a_{1},\ldots,a_{m_{k}})\in \mathcal{D}_{n_1,m_1;\ldots;n_{k}, m_{k}}(\{b_{k}\})}I_{m_{k}}(a_{1},\ldots,a_{m_{k}}).
\end{split}
\end{equation*}
Writing $t=s(\xi-\varepsilon,{\tau(1)})+\frac{\varepsilon}{2}$, we estimate the $(t+\frac{\varepsilon}{2})$-dimensional Hausdorff measure of $F(\alpha)\cap G(\beta)$. Setting  $\psi(m_{k})=m_{k}-\sum_{i=1}^{k}(m_{i}-n_{i}+1),$ $M=512\sum_{i=1}^{\infty}\big(\tau(i)\big)^{-2t}.$ For sufficiently large $k$, we first have the following estimate$\colon$
\begin{equation}\label{e616}
\begin{split}
& \sum\limits_{b_{k}=1}^{\infty}\cdots\sum\limits_{b_{1}=1}^{\infty}\sum\limits_{(a_{1},\ldots,a_{m_{k}})\in \mathcal{D}_{n_{k}, m_{k}}(\{b_{k}\})}|I_{m_{k}}(a_{1},\ldots,a_{m_{k}})|^{t+\frac{\varepsilon}{2}}\\
\le&\sum\limits_{b_{k}=1}^{\infty}\cdots\sum\limits_{b_{1}=1}^{\infty}\sum\limits_{a_{1},\ldots,a_{\psi(m_{k})}\in\mathbb{N}}4^{k(t+\frac{\varepsilon}{2})}\Big(\frac{1}{q_{\psi(m_{k})}(a_{1},\ldots,a_{\psi(m_{k})})}\Big)^{2t+\varepsilon}\prod_{j=1}^{k}\Big(\frac{1}{q_{m_j-n_j+1}(b_j,\ldots,b_j)}\Big)^{2t+\varepsilon}\\
\le&4^{k(t+\frac{\varepsilon}{2})}\sum\limits_{a_{1},\ldots,a_{\psi(m_{k})}\in\mathbb{N}}\Big(\frac{1}{q_{\psi(m_{k})}(a_{1},\ldots,a_{\psi(m_{k})})}\Big)^{2t+\varepsilon}\prod_{j=1}^{k}\left(\sum_{i=1}^{\infty}\Big(\frac{1}{q_{m_j-n_j+1}(i,\ldots,i)}\Big)^{2t+\varepsilon}\right)\\
\le&(4^{t+\frac{\varepsilon}{2}}M)^{k}\sum\limits_{a_{1},\ldots,a_{\psi(m_{k})}\in\mathbb{N}}\Big(\frac{1}{q_{\psi(m_{k})}(a_{1},\ldots,a_{\psi(m_{k})})}\Big)^{2t+\varepsilon}\prod_{j=1}^{k}\Big(\frac{1}{q_{m_j-n_j+1}(1,\ldots,1)}\Big)^{2t+\varepsilon}\\
\le&(16^{t+\frac{\varepsilon}{2}}M)^{k}\sum\limits_{a_{1},\ldots,a_{\psi(m_{k})}\in \mathbb{N}}\Big(\frac{1}{q_{m_{k}}(a_{1},\ldots,a_{\psi(m_{k})},1,\ldots,1)}\Big)^{2t+\varepsilon}\\
\le&(16^{t+\frac{\varepsilon}{2}}M)^{k}\sum \limits_{a_{1},\ldots,a_{m_{k}-\lfloor m_{k}(\xi-\varepsilon)\rfloor}\in \mathbb{N}}\Big(\frac{1}{q_{m_{k}}(a_{1},\ldots,a_{m_{k}-\lfloor m_{k}(\xi-\delta)\rfloor},1,\ldots,1)}\Big)^{2s_{m_{k}}(\xi-\varepsilon,{\tau(1)})+\varepsilon}\\
\le&(16^{t+\frac{\varepsilon}{2}}M)^{k}\Big(\frac{1}{2}\Big)^{\frac{m_{k}-1}{2}\varepsilon},
\end{split}
\end{equation}
where the third inequality holds since
\begin{equation*}
\begin{split}
    \sum\limits_{i=1}^{\infty} \Big(\frac{1}{q_{n}(i,\ldots,i)}\Big)^{2t+\varepsilon}
   &=\Big(\frac{1}{q_{n}(1,\ldots,1)}\Big)^{2t+\varepsilon} \sum\limits_{i=1}^{\infty}\Big(\frac{q_{n}(1,\ldots,1)}{q_{n}(i,\ldots,i)}\Big)^{2t+\varepsilon}\\
   &\le\Big(\frac{1}{q_{n}(1,\ldots,1)}\Big)^{2t+\varepsilon} \sum\limits_{i=1}^{\infty}\Big(\frac{4\tau(1)}{\tau(i)}\Big)^{2t+\varepsilon};
\end{split}
\end{equation*}
the penultimate one follows from (\ref{e610}) and (\ref{e37}) and the last one is by Remark \ref{rem2} and Lemma \ref{lem21}(1).

Hence,
\begin{equation*}
\begin{split}
\mathcal{H}^{t+\frac{\varepsilon}{2}}\big(F(\alpha)\cap G(\beta)\big)
\leq &\liminf_{K\to\infty}\sum\limits_{k=K}^{\infty}\sum_{m_{k}= e^{\frac{k}{C}}}^{\infty}
                                                    \sum_{(n_{1},m_{1},\ldots,m_{k-1},n_{k})\in\Lambda_{k, m_k}}\sum\limits_{b_{k}=1}^{\infty}\cdots
                                                    \sum\limits_{b_{1}=1}^{\infty}\\
     &\times\sum\limits _{(a_{1},\ldots,a_{m_{k}})\in\mathcal{D}_{n_{k}, m_{k}}(\{b_{k}\})}|I_{m_{k}}(a_{1},\ldots,a_{m_{k}})|^{t+\frac{\varepsilon}{2}}\\
\overset{\text{(\ref{e616})}}{\leq}&\liminf_{K\to\infty}\sum\limits_{k=K}^{\infty}\sum\limits_{m_{k}=e^{\frac{k}{C}}}^{\infty}
                                                    \sum_{n_{k}=1}^{m_{k}}\sum_{m_{k-1}=1}^{n_{k}}\cdots\sum\limits_{m_{1}=1}^{n_{2}}
                                                    \sum\limits_{n_{1}=1}^{m_{1}}(16^{t+\frac{\varepsilon}{2}}M)^{k}\Big(\frac{1}{2}\Big)^{\frac{m_{k}-1}{2}\varepsilon}\\
\leq&\liminf_{K\to\infty}\sum\limits_{k=K}^{\infty} \sum\limits_{m_{k}=e^{\frac{k}{C}}}^{\infty}(16^{t+\frac{\varepsilon}{2}}Mm_{k})^{2C\log m_{k}}\Big(\frac{1}{2}\Big)^{\frac{m_{k}-1}{2}\varepsilon}\\ \leq&\liminf_{K\to\infty}\sum\limits_{k=K}^{\infty} \sum\limits_{m_{k}=e^{\frac{k}{C}}}^{\infty}\Big(\frac{1}{2}\Big)^{\frac{m_{k}-1}{4}\varepsilon}
\leq \frac{1}{1-(\frac{1}{2})^{\frac{\varepsilon}{4}}}\sum\limits_{k=1}^{\infty} \Big(\frac{1}{2^{\varepsilon}}\Big)^{\frac{e^{\frac{k}{C}}-1}{4}} < +\infty,
\end{split}
\end{equation*}
where the penultimate one holds since $(16^{t+\frac{\varepsilon}{2}}Mk)^{2C\log k}<2^{\frac{k-1}{4}\varepsilon}$ for $k$ large enough.

\subsection{Lower bound of $\dim_{H}\big(F(\alpha)\cap G(\beta)\big)$}
Note that $F(\alpha)\cap F(\beta)$ is at most countable for $\alpha>\frac{\beta}{1+\beta}$; we assume that $\alpha\le \frac{\beta}{1+\beta}.$
Let $\{n_{k}\}$ and $\{m_{k}\}$ be two strictly increasing sequences   satisfying the following conditions$\colon$

{\rm{(1)}} $(m_{k}-n_{k})\leq(m_{k+1}-n_{k+1})$ and $n_{k}< m_{k}< n_{k+1}$ for $k\geq 1$;

{\rm{(2)}} $\lim_{k\to\infty}\frac{m_{k}-n_{k}}{n_{k+1}}=\frac{\alpha}{1-\alpha}$;

{\rm{(3)}} $\lim_{k\to\infty}\frac{m_{k}-n_{k}}{m_{k}}=\beta$.

With the help of these sequences, we construct a Cantor subset of $F(\alpha)\cap G(\beta)$  to provide a lower bound estimate of its dimension.  The set $E(B)$ is defined in much the same way as in Section \ref{S_{4.2}}, the only difference being that the digit  $i$ is replaced by the digit $1$;   the mapping $f$ is defined in exact the same way. It remains to  verify that the set $f\big(E(B)\big)$ is a subset of $F(\alpha)\cap G(\beta).$

\begin{lem}
For any $B\ge 2,$ $f\big(E(B)\big)\subset F(\alpha)\cap G(\beta).$
\end{lem}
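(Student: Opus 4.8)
The plan is to mimic the proof of Lemma~\ref{lem44} which handles the analogous statement for $E(\hat\nu,\nu)$, but now tracking the run-length function $R_n$ rather than the approximation exponents. Fix $\bar x=f(x)\in f(E(B))$ with $x=[a_1,a_2,\ldots]\in E(B)$. By the construction of $f$ (inserting a separating digit $d>B$ after each block of $m_k-n_k$ consecutive digits inside $\mathbb B_k$, starting from position $n_k$, where now each of these blocks consists of the digit $1$ repeated $m_k-n_k$ times in the first sub-block), the only maximal constant runs in the continued fraction expansion of $\bar x$ that can be long are exactly these runs of $1$'s of length $m_k-n_k$; every other run is broken up by the inserted digit $d$ or consists of the shorter bounded blocks $a_{m_k+1},\ldots,a_{n_{k+1}}$ (which lie in $\{1,\ldots,B\}$ but, crucially, are interrupted by $d$'s at least every $m_k-n_k$ steps, so no run exceeds $m_k-n_k+1$). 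Hence $R_n(\bar x)$ behaves piecewise like $\min\{\text{(run reached by position }n), m_k-n_k+1\}$, and one reads off the exact analogue of Properties \textbf{(a)}--\textbf{(c)} from the upper-bound section, with $h_j$ (the length of the $j$-th inserted block $\mathbb B_j'$) playing the role of the coordinates there.

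Concretely, I would let $h_j$ denote the length of $\mathbb B_j'$ and observe, as in Lemma~\ref{lem44}, that $\lim_k \frac{\sum_{j=0}^{k-1}h_j}{n_k}=1$ since the number of inserted $d$'s in $\mathbb B_k'$ is $O\big(\frac{n_{k+1}-m_k}{m_k-n_k}\big)+1=o(h_k)$. For $n$ with $\sum_{j=0}^{k-1}h_j\le n<\sum_{j=0}^{k}h_j$, the longest run of a constant digit among the first $n$ partial quotients of $\bar x$ equals $m_k-n_k+1$ once $n$ has advanced far enough into the $k$-th block (past the first full sub-block of $1$'s), and is exactly $n-(\text{position of the start of the current sub-block})$ while still inside an incomplete sub-block. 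This gives, for the relevant ranges of $n$,
\begin{equation*}
\frac{m_k-n_k+1}{\sum_{j=0}^{k}h_j}\le \frac{R_n(\bar x)}{n}\le \frac{m_k-n_k+1}{\sum_{j=0}^{k-1}h_j+(\text{const})},
\end{equation*}
mirroring \textbf{(b)}--\textbf{(c)}. Combining these with Conditions (2) and (3) on $\{n_k\},\{m_k\}$ and the relation $\sum_{j=0}^{k-1}h_j\sim n_k$, I would conclude
$$\liminf_{n\to\infty}\frac{R_n(\bar x)}{n}=\liminf_{k\to\infty}\frac{m_k-n_k+1}{\sum_{j=0}^{k}h_j}=\lim_k\frac{m_k-n_k}{n_{k+1}}=\frac{\alpha}{1-\alpha}\cdot\frac{1}{?}$$
— here care is needed with the normalization, and in fact the right statement (matching \eqref{e63}) is $\liminf_n R_n(\bar x)/n=\alpha$ and $\limsup_n R_n(\bar x)/n=\beta$, using that $\frac{m_k-n_k+1}{n_{k+1}}\to\frac{\alpha}{1-\alpha}$ forces $\frac{m_k-n_k+1}{n_{k+1}+(m_k-n_k)}\to\alpha$, and $\frac{m_k-n_k+1}{m_k}\to\beta$. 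Letting the error $\varepsilon\to0$ throughout yields $\bar x\in F(\alpha)\cap G(\beta)$.

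The main obstacle I anticipate is bookkeeping the exact value of $R_n(\bar x)$ near the transition points between blocks: one must check that inserting the separator $d$ genuinely caps every competing run at length $\le m_k-n_k+1$ (so that the $1$-runs of length $m_k-n_k$ are the ones realizing the liminf/limsup), and that the ``+1'' shifts and the $O(1)$ offsets coming from the inserted digits do not affect the limits — they don't, precisely because $h_k\to\infty$ and the number of insertions in $\mathbb B_k'$ is $o(h_k)$, exactly the estimate already used to invoke Lemma~\ref{lem214}. Once that verification is in place, the liminf and limsup computations are the same Stolz--Cesàro-type manipulations as in \eqref{e63}--\eqref{e65}, so I would present them briefly and refer back to the upper-bound section for the routine parts.
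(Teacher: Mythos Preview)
Your approach is the same as the paper's: exploit that the inserted separators $d$ cap every run at length $m_k-n_k$, so $R_n(\bar x)$ is piecewise constant then piecewise linear, and read off $\liminf$ and $\limsup$ from Conditions (2)--(3). The ideas are right, but the execution is where the argument wobbles --- exactly at the ``bookkeeping'' step you flagged.

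Two concrete issues. First, the displayed two-sided inequality you give for $R_n(\bar x)/n$ is too coarse to yield $\limsup_n R_n(\bar x)/n=\beta$: your upper bound $\frac{m_k-n_k+1}{\sum_{j\le k-1}h_j+O(1)}\sim\frac{m_k-n_k}{n_k}\to\frac{\beta}{1-\beta}$, not $\beta$. You need the finer observation that in the growing phase (while $n$ moves through the first sub-block of $1$'s in $\mathbb B_{k+1}'$) one has $R_n(\bar x)=n-(\text{start position})$, hence $\frac{R_n}{n}\le \frac{m_{k+1}-n_{k+1}}{m_{k+1}'}\to\beta$. Second, the maximal run in $\bar x$ has length $m_k-n_k$, not $m_k-n_k+1$: the $d$'s are inserted after each block of $m_k-n_k$ digits, so the first sub-block is exactly $m_k-n_k$ copies of $1$. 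This off-by-one is harmless for the limits but should be fixed.

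The paper's write-up avoids both pitfalls by tracking the shifted positions $n_k'=n_k+\sum_{l<k}(t_l+2)$ and $m_k'=m_k+\sum_{l<k}(t_l+2)$ (so $n_k/n_k'\to1$, $m_k/m_k'\to1$), and then stating an \emph{exact} piecewise formula for $R_n(\bar x)$ on $[m_k',m_{k+1}')$:
\[
R_n(\bar x)=\begin{cases} m_k-n_k,& m_k'\le n\le n_{k+1}'+(m_k-n_k),\\ n-n_{k+1}',& n_{k+1}'+(m_k-n_k)<n<m_{k+1}'.\end{cases}
\]
From this the identification of the minimising and maximising subsequences of $R_n/n$ is immediate and the limits $\alpha$, $\beta$ follow directly from Conditions (2)--(3). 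I would restructure your proof around this explicit formula rather than the loose two-sided bound; once it is in place, your confused ``$\tfrac{1}{?}$'' step disappears, and the argument is two lines.
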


\begin{proof}
Recall the definitions of
$$t_{k}=\max \{t\in \mathbb{N}\colon m_{k}+t(m_{k}-n_{k})< n_{k+1}\},$$
$$m_{k}'=m_{k}+\sum_{l=1}^{k-1}(t_{l}+2),~n_{k}'=n_{k}+\sum_{l=1}^{k-1}(t_{l}+2).$$
We know that
$$\lim_{k\to\infty}\frac{\sum_{l=1}^{k}(t_{l}+2)}{n_{k+1}}=0,\quad \lim_{k\to\infty}\frac{n_k}{n_{k}'}=\lim_{k\to\infty}\frac{m_k}{m_k'}=1.$$

For  $x\in f(E(B))$, and $m_{k}'\leq n<m_{k+1}'$ with $k\in \mathbb{N},$ we have that
\begin{equation*}
 R_{n}(x)=\left\{\begin{array}{ll}
                                 m_{k}-n_{k},& \ \text{if}~ m_{k}'\leq n\leq n_{k+1}'+m_{k}-n_{k}, \\
                                 n-n_{k+1}', & \ \text{if}~n_{k+1}'+m_{k}-n_{k}< n< m_{k+1}' .\end{array}\right.
\end{equation*}
Observing that for $n_{k+1}'+m_{k}-n_{k}< n< m_{k+1}',$
\begin{equation*}
\frac{m_{k}-n_{k}}{n_{k+1}'+m_{k}-n_{k}}\leq\frac{R_{n}(x)}{n}=\frac{n-n_{k+1}'}{n}\leq\frac{m_{k+1}-n_{k+1}}{m_{k+1}'},
\end{equation*}
we deduce that
\begin{align*}
  \liminf_{n\to\infty}\frac{R_{n}(x)}{n}
=\lim_{k\to\infty}\frac{R_{n_{k+1}'+m_{k}-n_{k}}(x)}{n_{k+1}'+m_{k}-n_{k}}
=\lim_{k\to\infty}\frac{m_{k}-n_{k}}{n_{k+1}+m_{k}-n_{k}}
=\alpha,
\end{align*}
and
\begin{align*}
  \limsup_{n\to\infty}\frac{R_{n}(x)}{n}
= \lim_{k\to\infty}\frac{m_{k+1}-n_{k+1}}{m_{k+1}'}
= \lim_{k\to\infty}\frac{m_{k+1}-n_{k+1}}{m_{k+1}}
=\beta.
\end{align*}
Hence $x\in F(\alpha)\cap G(\beta)$.
\end{proof}

\section{Proof of Theorem \ref{thm2}}

The proof of Theorem \ref{thm2} will be divided into two parts according as $\alpha=0$ or $0<\alpha\le1.$
We first note that $F(\alpha)\cap G(\beta)$ is at most countable for $\alpha>\frac{1}{2}\ge\frac{\beta}{1+\beta}$ by (\ref{e64}) and (\ref{e65}).
Moreover, since $G(0)\subseteq F(0)$ and $G(0)$ is of full Lebesgue measure, we have $\dim_H F(0)=1$.
Hence, we only need to deal with  the case $0<\alpha\le\frac{1}{2}.$

\textbf{Lower bound of $F(\alpha)$.} Since $F(\alpha)\cap G(\beta)\subseteq F(\alpha)$ for any $\beta\geq \frac{\alpha}{1-\alpha},$  we have
$$\dim_{H}F(\alpha)\geq \dim_{H}F(\alpha)\cap G(\beta)=s\Big(\frac{\beta^{2}(1-\alpha)}{\beta-\alpha},{\tau(1)}\Big).$$
The function $\frac{\beta^{2}(1-\alpha)}{\beta-\alpha}$ is continuous for $\beta\in [\frac{\alpha}{1-\alpha},1],$
and attains its minimum at the point $\beta=2\alpha\ge \frac{\alpha}{1-\alpha},$ so by Lemma \ref{lem210}(2), we obtain
$$\dim_{H}F(\alpha)\geq\dim_{H}\big(F(\alpha)\cap G(2\alpha)\big)=s\big(4\alpha(1-\alpha),{\tau(1)}\big).$$

\textbf{Upper bound of $F(\alpha)$.} For $x\in F(\alpha),$ there exists $\beta_{0}\in(0,1]$ such that
$$\liminf_{n\to\infty}\frac{R_{n}(x)}{n}=\alpha,~\limsup_{n\to\infty}\frac{R_{n}(x)}{n}=\beta_{0}.$$
Then $0<\alpha\leq\frac{\beta_{0}}{1+\beta_{0}}<\beta_{0}< 1$ or $\beta_{0}=1$.
If $0<\alpha\leq\frac{\beta_{0}}{1+\beta_{0}}<\beta_{0}< 1$, then for $0<\varepsilon<\frac{\alpha(1-2\alpha)}{2(1-\alpha)},$
there exists $\beta\in \mathbb{Q}^{+}$ such that $0<\alpha\le\frac{\beta_0}{1+\beta_0}\le\beta\leq\beta_{0}\leq \beta+\varepsilon< 1.$

Let
$$E_{\alpha,\beta,\epsilon}=\Big\{x\in [0,1)\colon \liminf_{n\to\infty}\frac{R_{n}(x)}{n}=\alpha,
                                                   ~\beta \leq \limsup_{n\to\infty}\frac{R_{n}(x)}{n} \leq \beta + \varepsilon < 1\Big\},$$
we have
$$F(\alpha)\subseteq\Big(\bigcup_{\beta\in \mathbb{Q}^{+}}E_{\alpha,\beta,\varepsilon}\Big)\cup \big(F(\alpha)\cap G(1)\big).$$
So $\dim_{H}F(\alpha)\leq \max\Big\{\frac{1}{2}, \sup\big\{\dim_{H}E_{\alpha,\beta,\varepsilon}\colon \beta\in \mathbb{Q}^{+}\big\}\Big\}.$

It remains to estimate the upper bound of  $\dim_{H}E_{\alpha,\beta,\varepsilon}.$
Following the same line as the proof for the upper bound of $\dim_{H}\big(F(\alpha)\cap G(\beta)\big),$ we obtain that
$$\dim_{H}E_{\alpha,\beta,\varepsilon}\leq s\Big(\frac{\beta^{2}(1-\alpha)}{\beta-\alpha+\varepsilon},{\tau(1)}\Big).$$
Thus, since the function
            $\frac{\beta^{2}(1-\alpha)}{\beta-\alpha+\epsilon}$
with respect to $\beta$  attains its  minimum at   $\beta=2(\alpha-\varepsilon)$, we have that
\begin{align*}
  \dim_{H}F(\alpha)&\leq \sup \left\{s\Big(\frac{\beta^{2}(1-\alpha)}{\beta-\alpha+\varepsilon}\Big)\colon\beta \in \mathbb{Q}^{+} \right\}
\leq s\big(4(\alpha-\varepsilon)(1-\alpha),{\tau(1)}\big)\\&
\rightarrow s\big(4\alpha(1-\alpha),{\tau(1)}\big)\quad \text{as }\varepsilon\to 0.
\end{align*}

\medskip

{\noindent \bf  Acknowledgements}. This work is supported by NSFC No. 12171172, 12201476. The authors would like to express their gratitude to Professors  Bao-Wei Wang and Jian Xu for helpful discussions during the preparation of the paper.

\end{document}